\documentclass{article}
\usepackage{amssymb, latexsym, graphicx, color, epsfig, amsmath, amsthm}

\input xy

\xyoption{all}

\textwidth=125 mm \textheight=195 mm

\newtheorem{theorem}{Theorem}[section]
\newtheorem{maintheorem}{Main Theorem}
\newtheorem{lemma}[theorem]{Lemma}
\newtheorem{proposition}[theorem]{Proposition}
\newtheorem{corollary}[theorem]{Corollary}
\newtheorem{definition}[theorem]{Definition}

\def\example{{\bf {\bigskip}{\noindent}Example: }}

\def\exampleone{{\bf {\bigskip}{\noindent}Example 1: }}



\newcommand{\s}{\ensuremath{\sigma}}

\newcommand{\F}{\ensuremath{\mathbb{F}}}

\newcommand{\cG}{\ensuremath{\mathcal{G}}}

\newcommand{\cC}{\ensuremath{\mathcal{C}}}
\newcommand{\cS}{\ensuremath{\mathcal{S}}}


\newcommand{\CAT}{\ensuremath{\operatorname{CAT}}}

\newcommand{\Aut}{\ensuremath{\operatorname{Aut}}}

\newcommand{\Vol}{\ensuremath{\operatorname{Vol}}}

\newcommand{\Ad}{\ensuremath{\operatorname{Ad}}}

\newcommand{\Stab}{\ensuremath{\operatorname{Stab}}}
\newcommand{\half}{\ensuremath{\operatorname{half}}}


\newcommand{\quot}{\backslash \! \backslash}

\newcommand{\G}{\Gamma}
\newcommand{\bs}{\backslash}

\def\polhk#1{\setbox0=\hbox{#1}{\ooalign{\hidewidth
    \lower1.0ex\hbox{$\,\lhook$}\hidewidth\crcr\unhbox0}}}
\newcommand{\Swiatkowski}{\'Swi{\polhk{a}}tkowski}

\begin{document}

\title{Existence, covolumes and infinite generation of lattices for Davis complexes}

\author{Anne Thomas\thanks{This work was supported in part by NSF Grant No. DMS-0805206 and in part by EPSRC Grant No. EP/D073626/2.  The author is currently supported by ARC Grant No. DP110100440.} \\ School of Mathematics and Statistics F07 \\ University of Sydney \\ Sydney NSW 2006 \\ Australia \\ anne.thomas@sydney.edu.au}

\date{revised 6 April 2010}

\maketitle

\begin{abstract} Let $\Sigma$ be the Davis complex for a Coxeter system $(W,S)$.  The automorphism group
$G$ of $\Sigma$ is naturally a locally compact group, and a simple combinatorial condition due to
Haglund--Paulin determines when $G$ is nondiscrete.  The Coxeter group $W$ may be regarded as a uniform
lattice in $G$.  We show that many such $G$ also admit a nonuniform lattice
$\G$, and an infinite family of uniform lattices with covolumes converging to that of $\G$.  It follows
that the set of covolumes of lattices in $G$ is nondiscrete.  We also show that the nonuniform lattice $\G$
is not finitely generated.  Examples of $\Sigma$ to which our results apply include buildings and non-buildings, and many complexes of dimension greater than $2$.  To prove these results, we introduce
a new tool, that of ``group actions on complexes of groups", and use this to construct our lattices as fundamental groups of complexes of groups with universal cover $\Sigma$. \end{abstract}

\section{Introduction}\label{s:intro}

Let $G$ be a locally compact topological group, with Haar measure $\mu$.  A discrete subgroup $\G \leq
G$ is a \emph{lattice} if $\G \bs G$ carries a finite $G$--invariant measure, and is \emph{uniform} if
$\G \bs G$ is compact.  Some basic questions are: \begin{enumerate} \item\label{q:existence} Does $G$
admit a (uniform or nonuniform) lattice? \item\label{q:covolumes} What is the set of \emph{covolumes}
of lattices in $G$, that is, the set of positive reals \[\mathcal{V}(G):=\{ \mu(\G\bs G) \mid \mbox{$\G
< G$ is a lattice}\}?\] \item\label{q:generation} Are lattices in $G$ finitely generated?
\end{enumerate}

These questions have been well-studied in classical cases.  For example, suppose $G$ is a reductive
algebraic group over a local field $K$ of characteristic $0$.  Then $G$ admits a uniform lattice,
constructed by arithmetic means (Borel--Harder~\cite{BoHa}), and a nonuniform lattice only if $K$ is
archimedean (Tamagawa~\cite{Ta}).  If $G$ is a semisimple real Lie group, the set $\mathcal{V}(G)$ is in most
cases discrete (see~\cite{Lu} and its references).  If in addition $G$ is simple and higher-rank, then $G$ and hence
its lattices have Kazhdan's Property (T) (see, for example,~\cite{Ma}).  Since countable groups with
Property (T) are finitely generated, it follows that all lattices in $G$ are finitely generated.

A nonclassical case is $G$ the automorphism group of a locally finite tree $T$.  The study of lattices in $G=\Aut(T)$ was initiated by Bass and Lubotzky, and has yielded many surprising differences from classical results (see the survey~\cite{Lu} and the reference~\cite{BL}).   For
example, the set $\mathcal{V}(G)$ is in many cases nondiscrete, and nonuniform
tree lattices are never finitely generated.

In fact, the automorphism group $G$ of any locally finite polyhedral complex $X$ is naturally a
locally compact group (see Section~\ref{ss:lattices}).  For many $X$ with $\dim(X)\geq 2$, there
is greater rigidity than for trees, as might be expected in higher dimensions.  For instance,
Burger--Mozes~\cite{BM} proved a `Normal Subgroup Theorem' for products of trees (parallel to that
of Margulis~\cite{Ma} for higher-rank semisimple Lie groups), and Bourdon--Pajot~\cite{BP} and
Xie~\cite{X} established quasi-isometric rigidity for certain Fuchsian buildings.  On the other
hand, lattices in $G=\Aut(X)$ can exhibit the same flexibility as tree lattices.  For example, the
set $\mathcal{V}(G)$ is nondiscrete for certain right-angled buildings~\cite{Th1} and Fuchsian
buildings~\cite{Th2}.  Another example is density of commensurators of uniform lattices in $G$,
proved by Haglund~\cite{H2} for certain $2$--dimensional Davis complexes, and by Haglund~\cite{H5}
and Kubena Barnhill--Thomas~\cite{KBT} for right-angled buildings.  Apart from right-angled
buildings, very little is known for $X$ of dimension $ > 2$.  Almost nothing is known for $X$ not a
building.

In this paper we consider Questions~\eqref{q:existence}--\eqref{q:generation} above for lattices
in $G=\Aut(\Sigma)$, where $\Sigma$ is the Davis complex for a Coxeter system $(W,S)$
(see~\cite{D} and Section~\ref{ss:Davis_complexes} below).  The Davis complex is a locally finite,
piecewise Euclidean $\CAT(0)$ polyhedral complex, and the Coxeter group $W$ may be regarded as a
uniform lattice in $G$.  Our results are the Main Theorem and its
Corollaries~\ref{c:nondiscreteness} and~\ref{c:infinite_generation} below, which establish
tree-like properties for lattices in many such $G$.  After stating these results, we discuss how they apply to (barycentric
subdivisions of) Fuchsian buildings and Platonic polygonal complexes, and to many Davis complexes $\Sigma$ with
$\dim(\Sigma) >
2$.

To state the Main Theorem, recall that for a Coxeter system $(W,S)$ with
$ W = \langle\, S \mid (st)^{m_{st}} \rangle$, and any $T \subset S$, the \emph{special subgroup} $W_T$ is the subgroup of $W$ generated by the elements $s \in T$.  A special subgroup $W_T$ is \emph{spherical} if
it is finite, and the set of spherical special subgroups of $W$ is partially ordered by inclusion.
The poset of nontrivial spherical special subgroups is an abstract simplicial complex $L$, called the
\emph{nerve} of $(W,S)$.  We identify each generator $s \in S$ with the corresponding vertex $W_{
\{s\} } = \langle s \rangle$ of $L$, and denote by $A$ the group of \emph{label-preserving
automorphisms} of $L$, that is, the group of automorphisms $\alpha$ of $L$ such that $m_{st} =
m_{\alpha(s)\alpha(t)}$ for all $s, t \in S$.  The group $G=\Aut(\Sigma)$ is nondiscrete if and only if there is a nontrivial $\alpha \in A$ such that $\alpha$ fixes the star in $L$ of some vertex $s$
(Haglund--Paulin~\cite{HP1}).

\begin{maintheorem}\label{t:existence} Let $(W,S)$ be a Coxeter system, with nerve $L$ and Davis
complex $\Sigma$.  Let $A$ be the group of label-preserving automorphisms of $L$.  Assume that
there are vertices $s_1$ and $s_2$ of $L$, and nontrivial elements $\alpha_1, \alpha_2 \in A$,
such that for $i = 1,2$: \begin{enumerate}  \item\label{c:fix} $\alpha_i$ fixes the star of
$s_{3-i}$ in $L$; \item\label{c:free} the subgroup $\langle \alpha_i \rangle$ of $A$ acts freely on the $\langle \alpha_{i} \rangle$--orbit of $s_i$, in particular $\alpha_i(s_{i}) \neq s_{i}$;  \item\label{c:orbit} for all $t_i \neq s_i$
such that $t_i$ is in the $\langle \alpha_{i} \rangle$--orbit of $s_i$, $m_{s_it_i} = \infty$; and
\item\label{c:halvable} all spherical special subgroups $W_T$ with $s_i \in T$  are \emph{halvable
along $s_i$} (see Definition~\ref{d:halvable} below).\end{enumerate} Then $G=\Aut(\Sigma)$ admits:
\begin{itemize} \item a nonuniform lattice $\G$; and \item an infinite family of uniform lattices
$(\G_n)$, such that $\mu(\G_n \bs G) \to \mu(\G \bs G)$, where $\mu$ is Haar measure on $G$.
\end{itemize}\end{maintheorem}

\begin{corollary}\label{c:nondiscreteness} The set of covolumes of
lattices in $G$ is nondiscrete. \end{corollary}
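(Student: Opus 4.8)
\emph{Proof idea.} The corollary is immediate from the Main Theorem. Recall that a subset of $\R$ is \emph{nondiscrete} exactly when it contains a point that fails to be isolated, equivalently when some element of the set is an accumulation point of the set. My plan is to produce such a non-isolated point inside $\cV(G)$ directly from the lattices furnished by the Main Theorem, rather than by any separate construction.

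By the Main Theorem, $G$ admits a nonuniform lattice $\G$ together with an infinite family of uniform lattices $(\G_n)$ satisfying $\mu(\G_n \bs G) \to \mu(\G \bs G)$. Since every $\G_n$ and $\G$ is a lattice in $G$, each covolume $\mu(\G_n \bs G)$ and the limit $\mu(\G \bs G)$ belongs to $\cV(G)$. Thus $\mu(\G \bs G)$ is an element of $\cV(G)$ that is approximated arbitrarily closely by the elements $\mu(\G_n \bs G)$ of $\cV(G)$, which is precisely the configuration needed to witness nondiscreteness.

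The one point requiring care — and the only real content beyond invoking the Main Theorem — is to rule out the degenerate possibility that the sequence $\bigl(\mu(\G_n \bs G)\bigr)$ is eventually constant, since a convergent real sequence taking only finitely many distinct values is eventually equal to its limit and would then supply no nearby distinct covolumes. I would dispose of this using the explicit covolume computation that underlies the proof of the Main Theorem: there each covolume $\mu(\G_n \bs G)$ is evaluated, for a fixed normalization of Haar measure $\mu$, by the standard formula expressing it as a sum of reciprocals of orders of cell stabilizers over the finite quotient $\G_n \bs \Sigma$, and one checks that this yields a strictly monotone, hence injective, sequence of values converging to $\mu(\G \bs G)$. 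Consequently $\mu(\G \bs G)$ is a genuine accumulation point of $\cV(G)$, so $\cV(G)$ is nondiscrete. The main obstacle is therefore not conceptual but bookkeeping: verifying from the construction that the approximating covolumes are genuinely distinct rather than merely convergent.
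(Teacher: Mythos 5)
Your proposal is correct and follows essentially the same route as the paper, which treats the corollary as immediate from the Main Theorem. The distinctness issue you flag is settled exactly as you suggest: in the paper's proof of the Main Theorem the covolumes $\mu(\G_n \bs G)$ are the partial sums $\sum_{k=1}^{n} 1/|H_k|$ of a convergent series with strictly positive terms, so they are strictly increasing and never equal to the limit $\mu(\G \bs G) = \sum_{k=1}^{\infty} 1/|H_k|$, making that limit a genuine accumulation point of $\cV(G)$.
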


\begin{corollary}\label{c:infinite_generation} The group $G$ admits a lattice
which is not finitely generated. \end{corollary}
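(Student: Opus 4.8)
The plan is to prove the corollary by showing that the nonuniform lattice $\G$ produced by the Main Theorem is not finitely generated, following the strategy Bass and Lubotzky developed for nonuniform tree lattices in~\cite{BL}, now adapted to the complex-of-groups setting. Since $\G$ is constructed as the fundamental group $\pi_1(\mathbb{G}, Y)$ of a developable complex of groups with universal cover $\Sigma$, and since $\G$ is nonuniform, the underlying complex $Y = \G \quot \Sigma$ is infinite. The finiteness of the covolume $\mu(\G \bs G) = \sum_\sigma 1/|\mathbb{G}_\sigma|$, summed over the (infinitely many) cells $\sigma$ of $Y$, forces the orders of the local groups $\mathbb{G}_\sigma$ to grow without bound as one moves out along the non-compact directions of $Y$.

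First I would fix a base vertex in a finite connected subcomplex $Y_0 \subset Y$ and exhaust $Y$ by an increasing sequence of finite connected subcomplexes $Y_0 \subset Y_1 \subset \cdots$ with $\bigcup_n Y_n = Y$. Using the standard description of the fundamental group of a complex of groups in terms of the local groups and the generators associated to edges of $Y$, every element of $\G$ is represented by a homotopy class of $\mathbb{G}$--paths involving only finitely many cells, so that $\G = \bigcup_n \pi_1(\mathbb{G}, Y_n)$ is the ascending union of the subgroups carried by the finite subcomplexes. The inclusions $\pi_1(\mathbb{G}, Y_n) \hookrightarrow \pi_1(\mathbb{G}, Y_{n+1})$ are injective, and I would show they are strict: because the quotient complex of groups is reduced (it has no collapsible cells), each cell added in passing from $Y_n$ to $Y_{n+1}$ carries a local group strictly larger than that of the face along which it is attached, and so contributes genuinely new elements to the fundamental group.

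Given this, the conclusion is immediate: a finitely generated group cannot be expressed as a strictly increasing union of proper subgroups, since its finite generating set would lie in some single $\pi_1(\mathbb{G}, Y_N)$, forcing $\G = \pi_1(\mathbb{G}, Y_N) \subsetneq \G$, a contradiction. The main obstacle will be establishing the strictness of the inclusions in the second step: in contrast to Bass--Serre theory, where $\pi_1$ of a subgraph of groups is visibly an amalgam or HNN factor of the whole, the higher-dimensional complex-of-groups setting lacks such a clean decomposition, so I expect to argue strictness directly from the concrete local groups appearing in the construction of the Main Theorem, exhibiting for each $n$ an element of some local group $\mathbb{G}_\sigma$ with $\sigma \subset Y_{n+1} \setminus Y_n$ that cannot be represented by a $\mathbb{G}$--path supported in $Y_n$.
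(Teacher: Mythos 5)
Your proposal contains a genuine gap, and in fact the strategy as outlined cannot be completed from the inputs you use. The first problem is the asserted injectivity of the maps $\pi_1(\mathbb{G},Y_n)\hookrightarrow\pi_1(\mathbb{G},Y_{n+1})$. Unlike in Bass--Serre theory, the fundamental group of a sub-complex of groups does \emph{not} in general inject into the fundamental group of the ambient complex of groups in dimension $\geq 2$. A concrete failure: take the triangle of groups for the Coxeter group $W$ of type $A_3$ (so $W \cong S_4$), with vertex groups the dihedral special subgroups $W_{\{s,t\}},W_{\{t,u\}},W_{\{s,u\}}$ of orders $6,6,4$, edge groups $\langle s\rangle,\langle t\rangle,\langle u\rangle$, and trivial face group. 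The sub-complex of groups carried by two edges has fundamental group the infinite amalgam $W_{\{s,t\}}*_{\langle t\rangle}W_{\{t,u\}}$, while the whole complex of groups has fundamental group the finite group $S_4$; the induced map has infinite kernel, even though every cell's group is strictly larger than that of its faces (your ``reduced'' hypothesis). So at best you may work with the \emph{images} $\Gamma_n\leq\Gamma$, and then the entire proof rests on the non-stabilisation of that chain, which is exactly the step you defer. The second, more fundamental, problem is that the principle driving your outline --- infinite quotient plus finite covolume forces local groups to grow, hence infinite generation --- is false for complexes of dimension $\geq 2$: the group $SL_3(\F_q[t])$ is a nonuniform lattice acting on the two-dimensional Euclidean building of $SL_3\bigl(\F_q((t^{-1}))\bigr)$ with infinite quotient (a sector), unbounded cell stabilisers and convergent covolume series, yet it is finitely generated (Behr); only in the tree case (Nagao, Bass--Lubotzky) does the conclusion follow from these hypotheses alone. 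Any correct argument must therefore exploit the specific structure of the lattice $\Gamma$ built in the Main Theorem, and that is precisely the content missing from your sketch.

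The paper's proof uses that structure in a different and much shorter way. By Theorem~\ref{t:group_action}, the lattice $\Gamma=\pi_1(H(Z_\infty))$ splits as $\pi_1(G(Y_\infty))\rtimes H_\infty$, where $H_\infty = C_{q_1}\wr C_{q_2}\wr\cdots$ is the infinite iterated wreath product acting on $Y_\infty$. Hence $\Gamma$ surjects onto $H_\infty$, and $H_\infty$ is not finitely generated because every element of $H_\infty$ lies in some finite stage $H_n$ (Lemma~\ref{l:not_fg}); since quotients of finitely generated groups are finitely generated, $\Gamma$ is not finitely generated. In spirit this is closer to Nagao's treatment of $SL_2(\F_q[t])$ --- produce a splitting with a visibly infinitely generated quotient --- than to the geometric exhaustion argument you propose; the algebraic splitting is what replaces the tree-specific decomposition theory that is unavailable here.
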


\noindent Corollary~\ref{c:infinite_generation} follows from the proof of the Main Theorem and
Theorem~\ref{t:group_action_intro} below.  By the discussion above,
Corollary~\ref{c:infinite_generation} implies that the group $G$ in the Main Theorem does not have
Property (T).  This was already known for $G=\Aut(\Sigma)$, where $\Sigma$ is any Davis complex (Haglund--Paulin~\cite{HP1}); our results thus provide an
alternative proof of this fact in some cases.

We describe several infinite families of examples of Davis complexes $\Sigma$ to
which our results apply in Section~\ref{s:examples} below.   To establish these
applications, we use properties of spherical buildings  in~\cite{R}, and some
results of graph theory from~\cite{DM}.  In two dimensions, examples include the Fuchsian
buildings considered in~\cite{Th2}, and some of the highly symmetric Platonic
polygonal complexes investigated by \Swiatkowski~\cite{Sw}.  Platonic polygonal
complexes are not in general buildings, and even the existence of lattices (other
than the Coxeter group $W$) in their automorphism groups was not previously known.  An example of a
Platonic polygonal complex is the (unique) $\CAT(0)$ $2$--complex with all
$2$--cells squares, and the link of every vertex the Petersen graph
(Figure~\ref{f:petersen} below).  The Main Theorem and its corollaries also apply to
many higher-dimensional $\Sigma$, including both buildings and complexes which are not buildings.

\begin{figure}[ht]
\begin{center}
\includegraphics{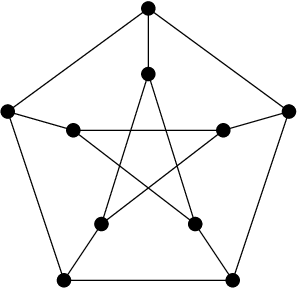}
\caption{Petersen graph}\label{f:petersen}
\end{center}
\end{figure}

To prove the Main Theorem, we construct the lattices $\G_n$ and $\G$ as fundamental groups of complexes of
groups with universal covers $\Sigma$ (see~\cite{BH} and Section~\ref{ss:complexes_of_groups} below).  The construction is given in Section~\ref{s:proof} below, where we also prove Corollary~\ref{c:infinite_generation}.

Complexes of groups are a generalisation to higher dimensions of graphs of groups. Briefly, given a polyhedral complex $Y$, a (simple) complex of groups $G(Y)$ over $Y$ is an assignment of a \emph{local group} $G_\s$ to each cell $\s$ of $Y$, with monomorphisms $G_\s \to G_\tau$ whenever $\tau \subset \s$, so that the obvious diagrams commute.  The action of a group $G$ on a polyhedral complex $X$ induces a complex of groups $G(Y)$ over $Y = G \bs X$.  A complex of groups is \emph{developable} if it is isomorphic to a complex of groups induced in this way.  A developable complex of groups $G(Y)$ has a simply-connected \emph{universal cover} $\widetilde{G(Y)}$, equipped with a canonical action of the \emph{fundamental group of the complex of groups} $\pi_1(G(Y))$.

A key difference from graphs of groups is that complexes of groups are not in general
developable.  In addition, even if $G(Y)$ is developable, with universal cover say $X$, it may be impossible to identify $X$ of dimension $\geq 2$ using only local data such as the links of
its vertices (see Ballmann--Brin~\cite{BB1} and Haglund~\cite{H1}).  To ensure that our complexes
of groups are developable with universal cover $\Sigma$, we use covering theory for complexes of
groups (see~\cite{BH} and~\cite{LT}, and Section~\ref{ss:definitions} below).  The
main result needed is that if there is a covering of complexes of groups $G(Y) \to H(Z)$, then
$G(Y)$ is developable if and only if $H(Z)$ is developable, and the universal covers of $G(Y)$ and
$H(Z)$ are isometric (see Theorem~\ref{t:coverings} below).

The other main ingredient in the proof of the Main Theorem is Theorem~\ref{t:group_action_intro} below, which introduces a theory of ``group actions on complexes of groups".  This is a method of manufacturing new complexes of groups with a given universal cover, by acting on previously-constructed complexes of groups.  Given a complex of groups $G(Y)$, and the action of a group $H$ on $Y$, the $H$--action \emph{extends to an action on $G(Y)$} if there is a homomorphism from $H$ to $\Aut(G(Y))$.  Roughly, this means that for each cell $\s$ of $Y$, each $h \in H$ induces a group isomorphism $G_\s \to G_{h\cdot\s}$, so that the obvious diagrams commute (see Section~\ref{ss:definitions} below for definitions).  In Section~\ref{s:group_actions} below we prove:

\begin{theorem}\label{t:group_action_intro}  Let $G(Y)$ be a (simple) complex of groups over $Y$, and suppose that the action of a group $H$ on $Y$ extends to an action on $G(Y)$.  Then the $H$--action induces a complex of groups $H(Z)$ over $Z = H \bs Y$ such that there is a covering of complexes of groups $G(Y) \to H(Z)$. Moreover there is a natural short exact sequence
\[ 1 \to \pi_1(G(Y)) \to \pi_1(H(Z)) \to H \to 1,\] and if $H$ fixes a vertex of $Y$, then
\[ \pi_1(H(Z)) \cong \pi_1(G(Y)) \rtimes H.\]
 \end{theorem}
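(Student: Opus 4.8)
The plan is to construct the induced complex of groups $H(Z)$ directly from the data of the $H$-action on $G(Y)$, then verify that the natural projection $G(Y) \to H(Z)$ is a covering, and finally compute the fundamental group of $H(Z)$.

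First I would build the complex of groups $H(Z)$ over $Z = H \bs Y$. For each cell $\bar\s$ of $Z$, I choose a representative $\s$ in $Y$ and set the local group $H_{\bar\s}$ to be an extension of the original local group $G_\s$ by the stabilizer $H_\s = \Stab_H(\s)$. Since the $H$-action extends to $G(Y)$, each $h \in H_\s$ induces an automorphism of $G_\s$, giving a homomorphism $H_\s \to \Aut(G_\s)$; the natural candidate for the local group is the semidirect product $H_{\bar\s} = G_\s \rtimes H_\s$. The monomorphisms $H_{\bar\s} \to H_{\bar\tau}$ for $\bar\tau \subset \bar\s$ must combine the original monomorphisms $G_\s \to G_\tau$ of $G(Y)$ with the inclusions of stabilizers, together with the twisting elements coming from how $H$ permutes the cells. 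I would need to choose these twisting elements compatibly and verify that the resulting diagrams commute, so that $H(Z)$ is genuinely a (simple) complex of groups.

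Next I would exhibit the covering $G(Y) \to H(Z)$. The underlying map of complexes is the quotient map $Y \to H \bs Y = Z$. On local groups, the map $G_\s \hookrightarrow G_\s \rtimes H_\s = H_{\bar\s}$ is the natural inclusion. To check that this is a covering in the sense of Section~\ref{ss:definitions}, I must verify the defining local bijectivity condition: for each cell $\s$ of $Y$ and each cell $\bar\tau$ of $Z$ lying below $\bar\s$, the cosets of the image of $G_\s$ in $H_{\bar\s}$ should be in bijection with the cells of $Y$ above $\tau$ that lie below $\s$. This is precisely the orbit-stabilizer correspondence: the index $[H_{\bar\s} : G_\s] = [H_\s : 1]\cdot\text{(orbit data)}$ matches the number of preimages, and the bijection is induced by the $H$-action permuting cells in a fiber.

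Finally I would establish the semidirect product splitting $\pi_1(H(Z)) \cong \pi_1(G(Y)) \rtimes H$. Using the covering $G(Y) \to H(Z)$ and Theorem~\ref{t:coverings}, the two complexes of groups share a common universal cover, and the fundamental group $\pi_1(G(Y))$ sits inside $\pi_1(H(Z))$ as the image of the covering-induced map. I would produce an explicit section $H \to \pi_1(H(Z))$ using the generators coming from the $H$-action (the twisting elements in the homotopy presentation of $\pi_1(H(Z))$ along edges of $Z$), show that $\pi_1(G(Y))$ is normal, and check that together they generate with trivial intersection. The main obstacle I expect is the bookkeeping in the first two steps: making a coherent global choice of representatives and twisting elements so that the local groups and their monomorphisms assemble into a genuine simple complex of groups with commuting diagrams, and so that the covering condition holds on the nose rather than merely up to conjugacy. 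Once the covering is correctly set up, Theorem~\ref{t:coverings} does the heavy lifting for developability and the universal cover, and the splitting follows from a fairly standard analysis of the fundamental group presentation.
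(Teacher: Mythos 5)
Your proposal follows essentially the same route as the paper: local groups of $H(Z)$ defined as semidirect products $G_{\overline\sigma} \rtimes \Stab_H(\overline\sigma)$ at chosen representatives, edge monomorphisms and twisting elements built from chosen elements of $H$ moving representatives around, the covering over the quotient map $Y \to H\bs Y$ verified by an orbit--stabiliser coset count, and the splitting $\pi_1(H(Z)) \cong \pi_1(G(Y)) \rtimes H$ obtained from the covering-induced injection together with an explicit section of a split short exact sequence. One small correction: $H(Z)$ is not in general a \emph{simple} complex of groups --- unless the $H$--action has a strict fundamental domain, the twisting elements $h_{a,b} = h_a h_b h_{ab}^{-1}$ are genuinely nontrivial, so the diagrams for composable edges commute only up to conjugation by these elements, exactly as in the paper's construction.
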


\noindent Theorem~\ref{t:group_action_intro} is also used in~\cite{KBT}, and we expect this result
to be of independent interest.  To our knowledge, group actions on complexes of groups have not
previously been considered.  In~\cite{BJ}, Bass--Jiang determined the structure of the full
automorphism group of a graph of groups, but did not define or study the graph of groups induced
by a group action on a graph of groups.    A more precise statement of
Theorem~\ref{t:group_action_intro}, including some additional facts about $H(Z)$,  is given as
Theorem~\ref{t:group_action} below.

The Main Theorem is proved as follows.  The action of the Coxeter group $W$ on
$\Sigma$ induces a complex of groups $G(Y_1)$ over $Y_1= W \bs \Sigma$, with local
groups the spherical special subgroups of $W$.  We then construct a family of finite
complexes of groups $G(Y_n)$ and $H(Z_n)$, and two infinite complexes of groups
$G(Y_\infty)$ and $H(Z_\infty)$, so that there are coverings of complexes of groups
as sketched in Figure~\ref{f:coverings} below.

\begin{figure}[ht]\begin{center} \includegraphics{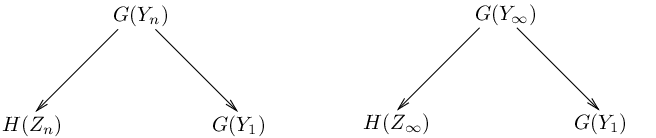} \caption{Coverings of
complexes of groups}\label{f:coverings} \end{center} \end{figure}

\noindent The fundamental groups of $H(Z_n)$ and $H(Z_\infty)$ are, respectively,
the uniform lattices $\G_n$, and the nonuniform lattice $\G$, in $G=\Aut(\Sigma)$.
For the local groups of $G(Y_n)$ and $G(Y_\infty)$, we use
Condition~\eqref{c:halvable} in the Main Theorem to replace certain spherical
special subgroups $W_T$ by the subgroup $\half_s(W_T)$, defined as follows:

\begin{definition}\label{d:halvable} Let $W_T$ be a spherical special subgroup of $W$, and suppose $s \in T$.  Then $W_T$ is \emph{halvable along $s$} if the union \[(T - \{ s \}) \cup \{ sts \mid t \in T - \{s\} \} \] generates an index $2$ subgroup, denoted $\half_s(W_T)$, of $W_T$.\end{definition}

\noindent The complexes of groups $H(Z_n)$ and $H(Z_\infty)$ are induced by group actions on,
respectively, $G(Y_n)$ and $G(Y_\infty)$.  To construct these group actions, we use
the automorphisms $\alpha_1$ and $\alpha_2$ of $L$.

I am grateful to Benson Farb for introducing me to these questions, and for his continuing
encouragement and advice.  I also thank G. Christopher Hruska and Kevin Wortman for many useful
discussions.  This particular work was inspired by conversations with Tadeusz Januszkiewicz and
Jacek \Swiatkowski, and much of this project was carried out at the Mathematical Sciences
Research Institute in Fall 2007, where I benefited from talking with Angela Kubena Barnhill, Michael W.
Davis, Jonathan P. McCammond and Damian Osajda.  I would also like to thank Karen Vogtmann for helpful comments on this manuscript, and an anonymous referee for careful reading and worthwhile suggestions.

\section{Background}\label{s:background}

In this section we present brief background.  In Section~\ref{ss:lattices} we describe the natural topology on $G$ the group of
automorphisms of a locally finite polyhedral complex $X$, and characterise uniform and nonuniform
lattices in $G$.   Section~\ref{ss:Davis_complexes} constructs the Davis complex $\Sigma$ for a
Coxeter system $(W,S)$, following~\cite{D}.  In Section~\ref{ss:complexes_of_groups} we recall the basics of Haefliger's
theory of complexes of groups (see~\cite{BH}), and describe the complex of groups $G(Y_1)$ induced by
the action of $W$ on $\Sigma$.

\subsection{Lattices in automorphism groups of polyhedral complexes}\label{ss:lattices}

Let $G$ be a locally compact topological group.  We will use the following normalisation of Haar
measure $\mu$ on $G$.

\begin{theorem}[Serre, \cite{S}]\label{t:Scovolumes} Suppose that a locally compact group $G$ acts on a set $V$ with compact open
stabilisers and a finite quotient $G\backslash V$.  Then there is a normalisation of the Haar measure
$\mu$ on $G$, depending only on the choice of $G$--set $V$, such that for each discrete subgroup
$\Gamma$ of $G$ we have \[\mu(\Gamma \bs G) = \Vol(\Gamma \quot V):=  \sum_{v \in \Gamma \bs V }
\frac{1}{|\Gamma_v|} \,\leq\infty. \] \end{theorem}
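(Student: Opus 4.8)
The plan is to compute $\mu(\Gamma\bs G)$ by disintegrating it along the projection $\Gamma\bs G \to \Gamma \bs V$, whose fibres are the orbits of the right multiplication action of the compact stabilisers. First I would record the easy structural facts. Since each $G_v$ is compact open and $\Gamma$ is discrete, $\Gamma_v := \Gamma \cap G_v$ is a discrete subgroup of a compact group, hence finite, so every summand $1/|\Gamma_v|$ is defined; and since $V$ is countable (as $G\bs V$ is finite and $G$ is second countable), $\Gamma\bs V$ is countable, so $\Vol(\Gamma \quot V)$ is a well-defined element of $(0,\infty]$. I would then fix representatives $v_1,\dots,v_k$ for the finitely many $G$-orbits of $V$ and normalise $\mu$ by the single condition $\sum_{i=1}^k 1/\mu(G_{v_i}) = 1$. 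This depends only on $V$ once one knows $\mu(G_v)$ is constant on $G$-orbits, which holds because $G$ is unimodular: in the present setting $G$ contains the uniform lattice $W$, and any locally compact group containing a lattice is unimodular.

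For the core computation I would fix one orbit representative $v_i$, write $G_0 = G_{v_i}$, and identify $G.v_i \cong G/G_0$, so that $\Gamma \bs (G.v_i) \cong \Gamma \bs G / G_0$. Choosing double-coset representatives $g_j$ gives a disjoint decomposition $G = \bigsqcup_j \Gamma g_j G_0$, valid no matter how many other $G$-orbits $V$ carries. The quotient map $\pi\colon G \to \Gamma\bs G$ is a covering (as $\Gamma$ is discrete), and I would transport $\mu$ to the measure $\bar\mu$ on $\Gamma\bs G$ via the unfolding formula $\int_{\Gamma\bs G}\bar f\,d\bar\mu = \int_G f\,d\mu$, where $\bar f(\Gamma g) = \sum_{\gamma\in\Gamma} f(\gamma g)$. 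The set $\pi(g_j G_0)$ is exactly the fibre over the point of $\Gamma\bs V$ corresponding to $w_j := g_j.v_i$, and the map $g_j G_0 \to \pi(g_j G_0)$ is $m_j$-to-one, where $m_j = |g_j^{-1}\Gamma g_j \cap G_0| = |g_j^{-1}\Gamma_{w_j} g_j| = |\Gamma_{w_j}|$. Applying unfolding to $f = \mathbf{1}_{g_j G_0}$ gives $\bar\mu(\pi(g_j G_0)) = \mu(g_j G_0)/m_j = \mu(G_0)/|\Gamma_{w_j}|$, using left-invariance $\mu(g_j G_0) = \mu(G_0)$. Summing over $j$ by countable additivity (allowing the value $\infty$) yields the orbit formula $\mu(\Gamma\bs G) = \mu(G_{v_i})\sum_{w\in\Gamma\bs (G.v_i)} 1/|\Gamma_w|$.

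Finally I would rearrange this as $\sum_{w\in\Gamma\bs(G.v_i)} 1/|\Gamma_w| = \mu(\Gamma\bs G)/\mu(G_{v_i})$ for each $i$, partition $\Gamma\bs V$ into the pieces lying over the $G$-orbits, and sum over $i$ to obtain $\Vol(\Gamma\quot V) = \mu(\Gamma\bs G)\sum_{i=1}^k 1/\mu(G_{v_i}) = \mu(\Gamma\bs G)$ by the choice of normalisation. I expect the main obstacle to be the measure-theoretic bookkeeping of the core step: rigorously defining $\bar\mu$ by unfolding, verifying that $\pi(g_j G_0)$ carries $\bar\mu$-measure exactly $\mu(G_0)/|\Gamma_{w_j}|$ (where discreteness of $\Gamma$, compactness of $G_0$, and finiteness of $\Gamma_{w_j}$ all enter), and justifying the summation in the case $\mu(\Gamma\bs G)=\infty$. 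The normalisation step is then routine once unimodularity of $G$ is in hand.
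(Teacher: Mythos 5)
The paper offers no proof of this theorem: it is quoted from Serre \cite{S}, with \cite{BL}, Chapter 1, indicated for the surrounding tree-lattice arguments, so there is no internal proof to compare against. Measured against the standard argument (the one Serre and Bass--Lubotzky give), your proposal \emph{is} essentially that argument, and its core is correct: the double-coset decomposition $G = \bigsqcup_j \Gamma g_j G_0$ relative to one orbit $G.v_i \cong G/G_0$, the unfolding description of $\bar\mu$, the count that $g_j G_0 \to \pi(g_j G_0)$ is exactly $|\Gamma_{w_j}|$-to-one, and left invariance $\mu(g_j G_0)=\mu(G_0)$ together give the orbit formula, and summing over the finitely many orbits with the normalisation $\sum_{i=1}^k 1/\mu(G_{v_i})=1$ yields the statement. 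The measure-theoretic bookkeeping you flag as the main obstacle is genuinely routine here: since $G_0$ is compact \emph{and} open, $\mathbf{1}_{g_jG_0}$ is continuous with compact support, and the sets $\pi(g_jG_0)$ are pairwise disjoint, open and compact, so inner regularity of $\bar\mu$ justifies the summation even when $\mu(\Gamma \bs G)=\infty$ (your appeal to second countability of $G$ is both unavailable from the hypotheses and unnecessary).

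The one genuine gap is the unimodularity step. The theorem concerns an arbitrary locally compact $G$ acting with compact open stabilisers and finite quotient; no lattice is assumed to exist, and these hypotheses do \emph{not} imply unimodularity: the stabiliser of an end of a regular locally finite tree (essentially $\Q_p \rtimes \Q_p^*$ acting on the vertices of its Bruhat--Tits tree) acts transitively with compact open stabilisers but is not unimodular. Your justification --- ``$G$ contains the uniform lattice $W$'' --- imports the context of this paper rather than the hypotheses of the statement being proved, and is mildly circular besides, since asserting that a discrete cocompact subgroup is a lattice (i.e.\ that a finite \emph{invariant} measure on $W \bs G$ exists) already presupposes that $\Delta_G$ is trivial on it. Two honest repairs: (i) take unimodularity as a standing hypothesis, which is how Serre and Bass--Lubotzky set things up, and verify it separately in the application via the standard fact that a discrete cocompact subgroup forces unimodularity; or (ii) note that your orbit formula uses only left invariance and shows that $\mu(G_v)\sum_{w \in \Gamma \bs G.v} 1/|\Gamma_w|$ is independent of the base point $v$ in a fixed orbit, so that if \emph{some} discrete subgroup has finite covolume then $\mu(G_v)$ is constant on orbits and $G$ is unimodular, while if none does then both sides of the desired identity equal $+\infty$ for every $\Gamma$ under any normalisation. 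With either repair your proof is complete; as written, the unimodularity claim rests on facts not contained in the statement.
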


Suppose $X$ is a connected, locally finite polyhedral complex.  Let $G=\Aut(X)$.  With the compact-open
topology, $G$ is naturally a locally compact topological group, and the $G$--stabilisers of cells in $X$ are compact and open.
Hence if $G \bs X$ is finite, there are several natural choices of sets $V$ in
Theorem~\ref{t:Scovolumes} above.  By the same arguments as for tree lattices~(\cite{BL}, Chapter 1),
it can be shown (for any suitable set $V$) that a discrete subgroup $\G \leq G$ is a lattice if and only if the series $\Vol(\G
\quot V)$ converges, and $\G$ is uniform if and only if this is a sum with finitely many terms.  In
Section~\ref{ss:Davis_complexes} below we describe our choice of $G$--set $V$ when $G$ is the group
of automorphisms of a Davis complex $\Sigma$.

\subsection{Davis complexes}\label{ss:Davis_complexes}

In this section we recall the construction of the Davis complex for a Coxeter system.
We follow the reference~\cite{D}.

A \emph{Coxeter group} is a group $W$ with a finite generating set $S$ and presentation of the form
\[ W = \langle s\in S \mid s^2 = 1 \,\, \forall \, s \in S, (s t )^{m_{st}} = 1 \,\,\forall \, s,t \in S \mbox{ with
}s \neq t\rangle \] with $m_{st}$ an integer $\geq 2$
or $m_{st} = \infty$, meaning that $st$ has infinite order.   The pair $(W,S)$ is
called a \emph{Coxeter system}.

\exampleone   This example will be followed throughout this section, and also referred to in
Sections~\ref{ss:complexes_of_groups} and~\ref{s:proof} below.  Let
\[ W = \langle s_1,s_2,s_3,s_4,s_5 \mid s_i^2=1, (s_1s_4)^m=(s_2s_4)^m=(s_3s_4)^m = 1,\] \[
(s_1s_5)^{m'}=(s_2s_5)^{m'}=(s_3s_5)^{m'} = 1\rangle \]
where $m$ and $m'$ are integers $\geq 2$.

\vspace{3mm}

Let $(W,S)$ be a Coxeter system.  A subset $T$ of $S$ is \emph{spherical} if the corresponding special
subgroup $W_T$ is spherical, that is, $W_T$ is finite.  By convention, $W_\emptyset$ is the trivial
group.  Denote by $\cS$ the set of all spherical subsets of $S$.  The set $\cS$ is partially ordered by
inclusion, and the poset $\cS_{> \emptyset}$ is the nerve $L$ of $(W,S)$ (this is equivalent to the
description of $L$ in the introduction above).  By definition, a nonempty set $T$ of vertices of $L$ spans a
simplex $\s_T$ in $L$ if and only if $T$ is spherical.  We identify the generator $s \in S$ with the
vertex $\{s \}$ of $L$.  The vertices $s$ and $t$ of $L$ are joined by an edge in $L$ if and only
if $m_{st}$ is finite, in which case we label this edge by the integer $m_{st}$.  The nerve $L$ of
Example 1 above, with its edge labels, is sketched in Figure~\ref{f:nerve} below.

\begin{figure}[ht]
\begin{center}
\includegraphics{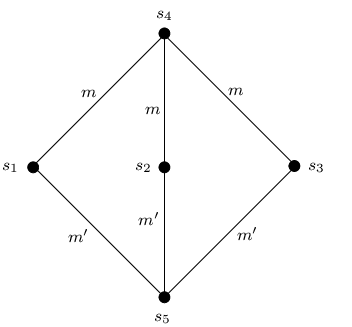}
\end{center}
\caption{Nerve $L$ of Example 1, with edge labels}
\label{f:nerve}
\end{figure}

We denote by $K$ the geometric realisation of the poset $\cS$.  Equivalently, $K$ is the cone on the
barycentric subdivision of the nerve $L$ of $(W,S)$.  Note that $K$ is compact and contractible, since
it is the cone on a finite simplicial complex.  Each vertex of $K$ has
\emph{type} a spherical subset of $S$, with the cone point having type $\emptyset$.  For Example 1 above,
$K$ and the types of its vertices are sketched on the left of Figure~\ref{f:K}.

\begin{figure}[ht]
\begin{center}
\resizebox{125mm}{!}{\epsfbox{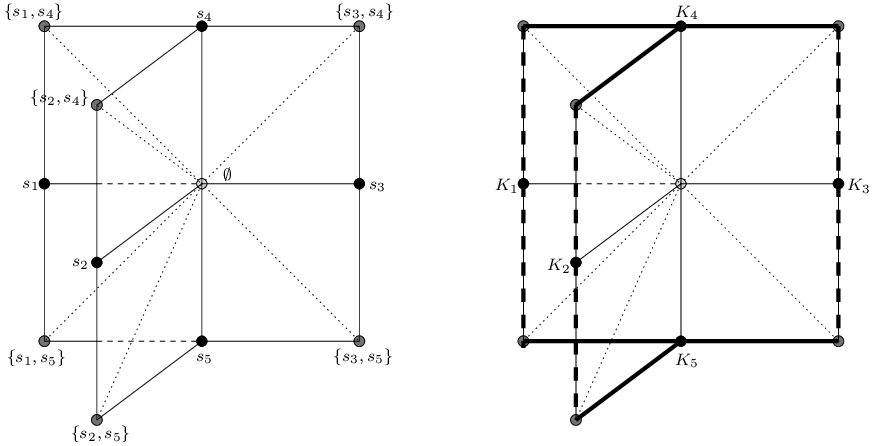}}
\end{center}
\caption{$K$ and types of vertices (left) and mirrors (right) for Example 1}
\label{f:K}
\end{figure}

For each $s \in S$ let $K_s$ be the union of the (closed) simplices in $K$ which contain the
vertex $s$ but not the cone point.  In other words, $K_s$ is the closed star of the vertex $s$
in the barycentric subdivision of $L$.   Note that $K_s$ and $K_t$ intersect if and only if
$m_{st}$ is finite.  The family $(K_s)_{s \in S}$ is a \emph{mirror structure} on $K$, meaning
that $(K_s)_{s\in S}$  is a family of closed subspaces of $K$, called \emph{mirrors}.  We call
$K_s$ the \emph{$s$--mirror} of $K$.  For Example 1 above, the mirrors $K_i=K_{s_i}$ are shown
in heavy lines on the right of Figure~\ref{f:K}.

For each $x \in K$, put \[S(x):= \{ s \in S \mid x \in K_s \}.\] Now define an equivalence relation
$\sim$ on the set $W \times K$ by $(w,x) \sim (w',x')$ if and only if $x = x'$ and $w^{-1}w' \in
W_{S(x)}$.  The \emph{Davis complex} $\Sigma$ for $(W,S)$ is then the quotient space: \[ \Sigma := (W
\times K) / \sim. \]  The types of vertices of $K$ induce types of the vertices of $\Sigma$, and the
natural $W$--action on $W \times K$ descends to a type-preserving action on $\Sigma$, with compact quotient $K$, so
that the $W$--stabiliser of a vertex of $\Sigma$ of type $T \in \cS$ is a conjugate of the spherical
special subgroup $W_T$.

We identify $K$
with the subcomplex $(1,K)$ of $\Sigma$, and write $wK$ for the translate $(w,K)$, where $w \in W$.
Any $wK$ is called a \emph{chamber} of $\Sigma$.  The mirrors $K_s$ of $K$, or any of their
translates by elements of $W$, are called the \emph{mirrors} of $\Sigma$.  Two distinct chambers of $\Sigma$
are \emph{$s$--adjacent} if their intersection is an $s$--mirror, and are \emph{adjacent} if their
intersection is an $s$--mirror for some $s \in S$.  Note that the chambers $wK$ and $w'K$ are
$s$--adjacent if and only if $w^{-1}w' = s$, equivalently $w' = ws$ and $w's = w$.  For Example 1 above, part of the Davis complex $\Sigma$ for $(W,S)$ is shown in
Figure~\ref{f:davis} below.  There are $2m$ copies of $K$ glued around the vertices of types
$\{s_i,s_4\}$, for $i = 1,2,3$, since $W_{\{s_i,s_4\}}$ has order $2m$.  Similarly, there are $2m'$ copies of $K$ glued around the vertices of types $\{s_i,s_5\}$, for $i = 1,2,3$.

The Davis complex $\Sigma$ may be metrised with a piecewise Euclidean structure, such that $\Sigma$
is a complete $\CAT(0)$ space (Moussong, see Theorem 12.3.3 of \cite{D}).  From now on we will assume
that $\Sigma$ is equipped with this metric.

\begin{figure}[ht]
\begin{center}
\includegraphics{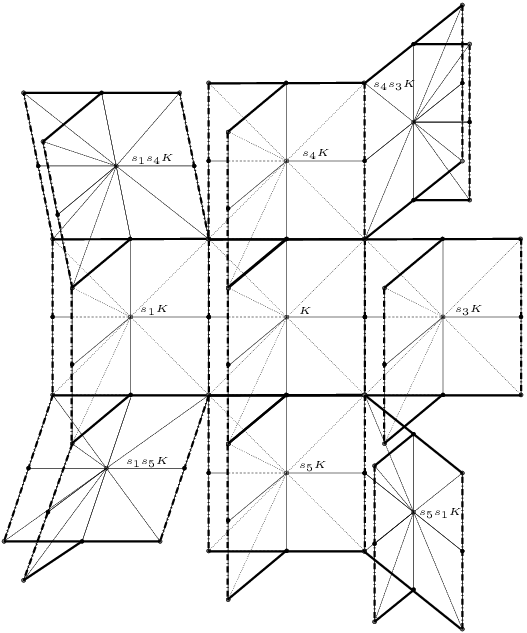}
\end{center}
\caption{Part of $\Sigma$, for Example 1}
\label{f:davis}
\end{figure}

Suppose that $G=\Aut(\Sigma)$ is the group of automorphisms of a Davis complex $\Sigma$.  Since $W$ acts cocompactly on $\Sigma$, with finite stabilisers, it may be regarded as a uniform
lattice in $G$.  We take as the set $V$ in Theorem~\ref{t:Scovolumes} above the set of vertices of $\Sigma$ of type $\emptyset$.  Then the covolume of $W$ is $1$, since $W$ acts simply transitively on $V$.

\subsection{Complexes of groups}\label{ss:complexes_of_groups}

We now outline the basic theory of complexes of groups, following Chapter III.$\cC$ of~\cite{BH}.  The definitions of the more involved notions of morphisms and coverings of complexes of groups are postponed to Section~\ref{ss:definitions} below. 

In the literature, a complex of groups $G(Y)$ is constructed over a space or set $Y$ belonging to
various different categories, including simplicial complexes, polyhedral complexes, or, most
generally, \emph{scwols} (small categories without loops).  In each case there is a set of
vertices, and a set of oriented edges with a composition law.  The formal definition of a scwol is:

\begin{definition}\label{d:scwol} A \emph{scwol} $X$ is the disjoint
union of a set $V(X)$ of vertices and a set $E(X)$ of edges, with each edge $a$ oriented from its
initial vertex $i(a)$ to its terminal vertex $t(a)$, such that $i(a) \not = t(a)$.  A pair of
edges $(a,b)$ is \emph{composable} if $i(a)=t(b)$, in which case there is a third edge $ab$, called
the \emph{composition} of $a$ and $b$, such that $i(ab)=i(b)$, $t(ab)=t(a)$ and if $i(a) = t(b)$
and $i(b)=t(c)$ then $(ab)c = a(bc)$ (associativity).  \end{definition}

\noindent We will always assume scwols are \emph{connected} (see Section~III.$\mathcal{C}$.1.1 of~\cite{BH}).  Morphisms of scwols and group actions on scwols are defined as follows:

\begin{definition}\label{d:morphism_scwols}  Let $X$, $Y$ and $Z$ be scwols.  A \emph{nondegenerate morphism} $f:Y \to Z$ is a map that sends $V(Y)$ to $V(Z)$, sends $E(Y)$ to $E(Z)$ and is such that:
\begin{enumerate}
\item for each $a \in E(Y)$, we have $i(f(a)) = f(i(a))$ and $t(f(a)) = f(t(a))$;
\item for each pair of composable edges $(a,b)$ in $Y$, we have $f(ab) = f(a)f(b)$; and
\item for each vertex $\s \in V(Y)$, the restriction of $f$ to the set of edges with initial vertex $\s$ is a bijection onto the set of edges of $Z$ with initial vertex $f(\s)$.
\end{enumerate}
A \emph{morphism of scwols} $f:Y \to Z$ is a functor from the category $Y$ to the category $Z$ (see Section~III.$\mathcal{C}$.A.1 of~\cite{BH}). An \emph{automorphism} of a scwol $X$ is a morphism from $X$ to $X$ that has an inverse.
\end{definition}

\begin{definition}\label{d:action_on_scwol} An \emph{action} of a
group $G$ on a scwol $X$ is a homomorphism from $G$ to the group of automorphisms of $X$ such that for all $a \in E(X)$ and all $g \in G$: \begin{enumerate}
\item $g\cdot i(a) \not = t(a)$; and \item \label{i:no_inversions} if $g \cdot i(a) = i(a)$ then $g \cdot a = a$.
\end{enumerate} \end{definition}

Suppose now that $\Sigma$ is the Davis complex for a Coxeter system $(W,S)$, as defined in Section~\ref{ss:Davis_complexes} above.  Recall that each vertex $\sigma \in V(\Sigma)$ has type $T$ a spherical subset of $S$.  The edges $E(\Sigma)$ are then naturally oriented by inclusion of type.  That is, if the edge $a$ joins the vertex $\sigma$ of type $T$ to
the vertex $\sigma'$ of type $T'$, then $i(a)=\sigma$ and $t(a)=\sigma'$ exactly when $T
\subsetneq T'$.  It is clear that the sets $V(\Sigma)$ and $E(\Sigma)$ satisfy the properties of a scwol.
Moreover, if $Y$ is a subcomplex of $\Sigma$, then the sets $V(Y)$ and $E(Y)$ also satisfy
Definition~\ref{d:scwol} above.  By abuse of notation, we identify $\Sigma$ and $Y$ with the associated
scwols.

We now define complexes of groups over scwols.

\begin{definition}
A \emph{complex of groups} $G(Y)=(G_\sigma, \psi_a, g_{a,b})$ over a scwol
$Y$ is given by: \begin{enumerate} \item a group $G_\sigma$ for each
$\sigma \in V(Y)$, called the \emph{local group} at $\sigma$;
\item a monomorphism $\psi_a: G_{i(a)}\rightarrow G_{t(a)}$ along the edge $a$ for each
$a \in E(Y)$; and
\item for each pair of composable edges, a twisting element $g_{a,b} \in
G_{t(a)}$, such that \[ \Ad(g_{a,b})\circ\psi_{ab} = \psi_a
\circ\psi_b
\] where $\Ad(g_{a,b})$ is conjugation by $g_{a,b}$ in $G_{t(a)}$,
and for each triple of composable edges $a,b,c$ the following
\emph{cocycle condition} holds
\[\psi_a(g_{b,c})\,g_{a,bc} = g_{a,b}\,g_{ab,c}.\] \end{enumerate}
\end{definition}

\noindent A complex of groups is \emph{simple} if each $g_{a,b}$ is trivial.

\example This example will be followed throughout this section, and used in the proof of the Main
Theorem in Section~\ref{s:proof} below.  Let $(W,S)$ be a Coxeter system with nerve $L$ and let $K$ be
the cone on the barycentric subdivision of $L$, as in Section~\ref{ss:Davis_complexes} above.  Put $Y_1
= K$, with the orientations on edges discussed above.  We construct a simple complex of groups $G(Y_1)$
over $Y_1$ as follows.  Let $\s \in V(Y_1)$.  Then $\s$ has type a spherical subset $T$ of $S$, and we
define $G_\s = W_T$.  All monomorphisms along edges of $Y_1$ are then natural inclusions, and all $g_{a,b}$
are trivial.  For $(W,S)$ as in Example 1 of Section~\ref{ss:Davis_complexes} above, the complex of
groups $G(Y_1)$ is shown in Figure~\ref{f:GWK} below.  In this figure, $D_{2m}$ and $D_{2m'}$ are the
dihedral groups of orders $2m$ and $2m'$ respectively, and $C_2$ is the cyclic group of order $2$.

\vspace{3mm}

\begin{figure}[ht]
\begin{center}
\includegraphics{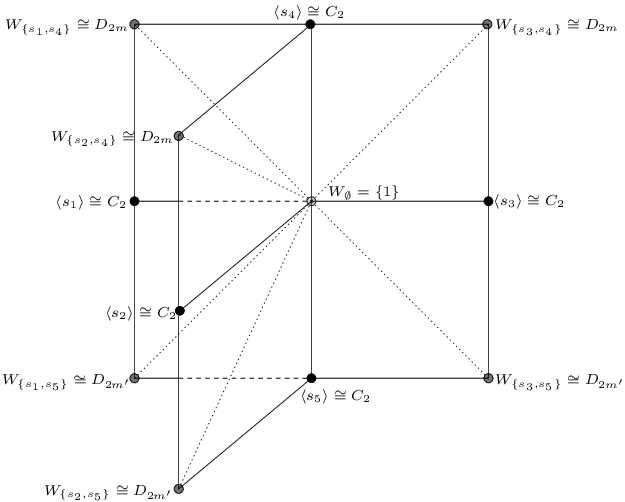}
\end{center}
\caption{The complex of groups $G(Y_1)$, for Example 1 of Section~\ref{ss:Davis_complexes}}
\label{f:GWK}
\end{figure}

Suppose a group $G$ acts on a scwol $X$, as in Definition~\ref{d:action_on_scwol} above.  Then the
quotient $Y = G \bs X$ also has the structure of a scwol, and the action of $G$ induces a complex of
groups $G(Y)$ over $Y$ (this construction is generalised in Section~\ref{ss:induced_cogs} below).  Let $Y =G \bs X$ with $p: X \to Y$  the natural projection. For each $\sigma \in V(Y)$, choose a lift $\overline\sigma \in V(X)$ such that $p(\overline\sigma) = \sigma$. The local group $G_\sigma$ of $G(Y)$ is then defined to be the stabiliser of $\overline\sigma$ in $G$, and the monomorphisms $\psi_a$ and elements $g_{a,b}$ are defined using further choices. The resulting complex of groups $G(Y)$ is unique up to
isomorphism (see Definition~\ref{d:morphism} below).

A complex of groups is \emph{developable} if it is isomorphic to a complex of groups $G(Y)$ induced, as
just described, by such an action.  Complexes of groups, unlike graphs of groups, are not in general
developable.  The complex of groups $G(Y_1)$ above is developable, since it is the complex of groups
induced by the action of $W$ on $\Sigma$, where for each $\s \in V(Y_1)$, with $\s$ of type $T$, we
choose $\overline\sigma$ in $\Sigma$ to be the vertex of $(1,K) = K \subset \Sigma$ of type $T$.

Let $G(Y)$ be a complex of groups.  The \emph{fundamental group of the complex of groups} $\pi_1(G(Y))$
is defined so that if $Y$ is simply connected and $G(Y)$ is simple, $\pi_1(G(Y))$ is isomorphic to the
direct limit of the family of groups $G_\sigma$ and monomorphisms $\psi_a$.  For example, since $Y_1 =
K$ is contractible and $G(Y_1)$ is a simple complex of groups, the fundamental group of
$G(Y_1)$ is $W$.

If $G(Y)$ is a developable complex of groups, then it has a \emph{universal cover}
$\widetilde{G(Y)}$.  This is a connected, simply-connected scwol, equipped with an action of
$\pi_1(G(Y))$, so that the complex of groups induced by the action of the fundamental group on the
universal cover is isomorphic to $G(Y)$.  For example, the universal cover of $G(Y_1)$ is $\Sigma$.

Let $G(Y)$ be a developable complex of groups over $Y$, with universal cover $X$ and fundamental
group $\G$.  We say that $G(Y)$ is \emph{faithful} if the action of $\G$ on $X$ is faithful, in which
case we may identify $\G$ with its image in $\Aut(X)$.  If $X$ is locally finite, then with the
compact-open topology on $\Aut(X)$, by the discussion in Section~\ref{ss:lattices} above the subgroup
$\Gamma \leq \Aut(X)$ is discrete if and only if all local groups of $G(Y)$ are finite.  Moreover, if
$\Aut(X)$ acts cocompactly on $X$, a discrete $\G \leq \Aut(X)$
is a uniform lattice in $\Aut(X)$ if and only if $Y \cong \G \bs X$ is finite, and a discrete $\G \leq \Aut(X)$ is a
nonuniform lattice if and only if $Y \cong \G \bs X$ is infinite and the series $\Vol(\G \quot V)$ converges, for
some $V \subset X$ on which $G=\Aut(X)$ acts according to the hypotheses of
Theorem~\ref{t:Scovolumes} above.

\section{Group actions on complexes of groups}\label{s:group_actions}

In this section we introduce a theory of group actions on complexes of groups.  The main result is
Theorem~\ref{t:group_action} below, which makes precise and expands upon
Theorem~\ref{t:group_action_intro} of the introduction.  The terms appearing in Theorem~\ref{t:group_action}
which have not already been discussed in Section~\ref{ss:complexes_of_groups} above will be defined in
Section~\ref{ss:definitions} below, where we also introduce some notation.  In
Section~\ref{ss:induced_cogs} below we construct the complex of groups induced by a group action on a complex
of groups, and in Section~\ref{ss:induced_covering} we construct the induced covering.  Using these
results, in Section~\ref{ss:fund_group} we consider the structure of the fundamental group of the
induced complex of groups. 

We will require only actions on \emph{simple} complexes of groups $G(Y)$ by \emph{simple} morphisms; this case is already substantially technical.  If in addition the action on $Y$ has a strict fundamental domain, it is possible to make choices so that the induced complex of groups is also simple, and many of the proofs in this section become much easier.  However, in our applications, the group action will not necessarily have a strict fundamental domain.

\begin{theorem}\label{t:group_action}  Let $G(Y)$ be a simple complex of groups over a connected scwol
$Y$, and suppose that the action of a group $H$ on $Y$ extends to an action by simple morphisms on
$G(Y)$.  Then the $H$--action induces a complex of groups $H(Z)$ over $Z = H \bs Y$, with $H(Z)$
well-defined up to isomorphism of complexes of groups, such that there is a covering of complexes of
groups \[ G(Y) \to H(Z). \] Moreover there is a natural short exact sequence \[1 \to \pi_1(G(Y)) \to \pi_1(H(Z)) \to H \to 1,\] and if $H$ fixes a vertex of $Y$, then
\[ \pi_1(H(Z)) \cong \pi_1(G(Y)) \rtimes H.\] Finally, if $G(Y)$ is faithful and the $H$--action on $G(Y)$ is faithful then
$H(Z)$ is faithful.\end{theorem}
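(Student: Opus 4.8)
The plan is to prove that the action of $\pi_1(H(Z))$ on its universal cover is faithful. Since $G(Y)$ is faithful it is in particular developable, so by Theorem~\ref{t:coverings} and the covering $G(Y) \to H(Z)$ the complex of groups $H(Z)$ is developable, its universal cover is identified with $X := \wt{G(Y)}$, and under the splitting $\pi_1(H(Z)) \cong \pi_1(G(Y)) \sd H$ established in the earlier part of the theorem the normal subgroup $\G := \pi_1(G(Y))$ acts on $X$ exactly as it does for $G(Y)$. Write $p : \pi_1(H(Z)) \to H$ for the quotient projection and let $N$ be the kernel of the action $\pi_1(H(Z)) \to \Aut(X)$; the goal is to show $N = 1$.

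First I reduce to the subgroup $H$. The restriction of the $\pi_1(H(Z))$--action to $\G$ is the faithful $\pi_1(G(Y))$--action on $X = \wt{G(Y)}$, so $N \cap \G = 1$. As $N$ and $\G$ are both normal in $\pi_1(H(Z))$ with trivial intersection, $p$ restricts to an injection $N \hookrightarrow H$; hence it suffices to prove $p(N) = 1$, for then $N \subseteq \ker p = \G$ and $N = N \cap \G = 1$.

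So fix $\eta = (g,h) \in N$ with $h = p(\eta) \in H$, and aim to show $h = 1$; I will do this by proving that $h$ acts trivially on the complex of groups $G(Y)$, whence $h = 1$ by faithfulness of the $H$--action on $G(Y)$. The quotient $X \to \G \bs X$ is canonically identified with the projection $X \to Y$, $H$--equivariantly with respect to $p$; since $\eta$ acts trivially on $X$, the automorphism $h$ of the underlying scwol $Y$ is therefore trivial. For the local data, observe that because $\G \trianglelefteq \pi_1(H(Z))$ and $\eta$ fixes $X$ pointwise, for every $\gamma \in \G$ the element $\eta \gamma \eta^{-1} \in \G$ acts on $X$ exactly as $\gamma$ does; faithfulness of the $\G$--action gives $\eta \gamma \eta^{-1} = \gamma$, so $\eta$ centralises $\G$. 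Realising each local group $G_\sigma$ as the stabiliser $\Stab_\G(\wt\sigma)$ of a lift $\wt\sigma \in V(X)$ of $\sigma$, and using that $\eta$ fixes $\wt\sigma$, the automorphism induced by $h$ on $G_\sigma$ is conjugation by $\eta$, which is trivial. Thus $h$ acts trivially on every local group, and, by the analogous computation on lifts of edges, on the edge monomorphisms and twisting data of $G(Y)$ as well; hence $h$ acts trivially on $G(Y)$ and $h = 1$.

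The main obstacle is the last step: matching the abstract $H$--action on $G(Y)$ (the homomorphism $H \to \Aut(G(Y))$, which acts on local groups, edge monomorphisms, and twisting elements) with the conjugation action of its lift $\eta$ inside $\pi_1(H(Z))$ on the universal cover. This requires carefully unwinding, from the constructions of Sections~\ref{ss:induced_cogs} and~\ref{ss:induced_covering}, how the identification $G_\sigma \cong \Stab_\G(\wt\sigma)$ intertwines the two actions, checking the edge components of the induced automorphism, and handling the ambiguity of lifts up to $\G$, i.e.\ the inner automorphisms of the local groups. Once this dictionary is in place, faithfulness of the $\G$--action upgrades ``$\eta$ centralises $\G$ and fixes $X$'' to ``$h$ induces the identity automorphism of $G(Y)$,'' and faithfulness of the $H$--action on $G(Y)$ completes the proof.
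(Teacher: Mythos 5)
Your proposal addresses only the final clause of the theorem. The statement asserts four things: that the $H$--action induces a complex of groups $H(Z)$ over $Z = H \bs Y$, that $H(Z)$ is well-defined up to isomorphism, that there is a covering of complexes of groups $G(Y) \to H(Z)$, and that $\pi_1(H(Z)) \cong \pi_1(G(Y)) \rtimes H$; faithfulness is the last and least substantial claim. You take all of the former as given --- you invoke ``the covering $G(Y) \to H(Z)$'' and ``the splitting \ldots established in the earlier part of the theorem'' --- but these are precisely what must be proved, and they constitute the bulk of the paper's argument: the construction of the local groups $H_\tau = G_{\overline\tau} \rtimes \Stab_H(\overline\tau)$, the edge monomorphisms $\theta_a$ and twisting elements $h_{a,b} = h_a h_b h_{ab}^{-1}$, the verification of the complex-of-groups axioms (Lemma~\ref{l:monom} and the proposition following it), well-definedness under the choices of lifts $\overline\tau$ and elements $h_a$ (Lemma~\ref{l:well_defined}), the covering $\Lambda$ with its coset-bijection condition (Proposition~\ref{p:covering}), and the split exact sequence $1 \to \pi_1(G(Y)) \to \pi_1(H(Z)) \to H \to 1$ obtained from $\Lambda$ and the canonical morphism $H(Z) \to H$ (Proposition~\ref{p:fund_gp_splits}). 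With none of this in place, your argument is circular as a proof of the statement: it deduces one conclusion of the theorem from the others.

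Even as a proof of the faithfulness clause alone, your argument has an admitted hole at its crux. The step converting ``$\eta$ fixes $X$ pointwise and centralises $\G$'' into ``$\Phi^h$ is the identity in $\Aut(G(Y))$'' requires the dictionary between the abstract action $\rho : H \to \Aut(G(Y))$ and conjugation by a lift $\eta$ under an identification $G_\s \cong \Stab_\G(\wt\s)$; you yourself flag this as ``the main obstacle'' and defer it to ``carefully unwinding'' the constructions of Sections~\ref{ss:induced_cogs} and~\ref{ss:induced_covering} --- that is, exactly the constructions you skipped. There is also a genuine subtlety you name but do not resolve: the identification $G_\s \cong \Stab_\G(\wt\s)$ depends on a choice of lift, so conjugation by $\eta$ pins down $\phi^h_\s$ only up to inner automorphisms, whereas forcing $h = 1$ from injectivity of $\rho$ requires the induced automorphism of $G(Y)$ to be exactly the identity, not merely inner. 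By contrast, the paper's faithfulness proof reads the conclusion off the explicit construction --- the local groups of $H(Z)$ are $G_{\overline\tau} \rtimes \Stab_H(\overline\tau)$ --- together with the characterisation of faithful complexes of groups in Proposition 38 of~\cite{LT}. Your kernel-splitting reduction ($N \cap \G = 1$, hence $N$ injects into $H$) is a reasonable idea that could plausibly be completed into an alternative proof of that one clause, but only after the construction, the covering, and the splitting have actually been established.
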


We will use the following general result on functoriality of coverings (which is implicit in~\cite{BH}, and stated and proved explicitly in~\cite{LT}).

\begin{theorem}\label{t:coverings} Let $G(Y)$ and $H(Z)$ be complexes of groups over scwols $Y$ and
$Z$.  Suppose there is a covering of complexes of groups $\Phi:G(Y) \to H(Z)$.  Then $G(Y)$ is
developable if and only if $H(Z)$ is developable. Moreover, $\Phi$ induces a monomorphism of fundamental groups \[  \pi_1(G(Y)) \hookrightarrow \pi_1(H(Z))\]
and an equivariant isomorphism of universal covers \[ \widetilde{G(Y)} \stackrel{\cong}{\longrightarrow}
\widetilde{H(Z)}.\] \end{theorem}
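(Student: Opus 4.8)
The plan is to base the whole statement on the concrete construction of the universal cover as a \emph{development} scwol and on the characterization of developability from \cite{BH}: the complex of groups $G(Y)$ is developable if and only if the canonical homomorphisms $\iota^G_\sigma \colon G_\sigma \to \pi_1(G(Y))$ are injective for every vertex $\sigma \in V(Y)$. First I would record the functoriality of the fundamental group: a morphism $\Phi = (f, (\phi_\sigma), (\phi(a)))$, with $f \colon Y \to Z$ the underlying map of scwols and $\phi_\sigma \colon G_\sigma \to H_{f(\sigma)}$ the local homomorphisms, induces a homomorphism $\Phi_* \colon \pi_1(G(Y)) \to \pi_1(H(Z))$ on the edge-path groups. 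By naturality of the construction the resulting squares commute, so that for every $\sigma$
\[ \iota^H_{f(\sigma)} \circ \phi_\sigma = \Phi_* \circ \iota^G_\sigma. \]
Because $\Phi$ is a covering, each $\phi_\sigma$ is injective, and since $Z$ is connected $f$ is surjective.

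Second, I would prove that $\Phi_*$ is injective. This is the combinatorial analogue of the fact that a covering map of spaces is $\pi_1$--injective, and I would establish it by a unique path-lifting argument. Fixing basepoints and a lift, the defining coset-bijection of a covering --- namely that for each $\sigma$ and each edge $b$ of $Z$ with $t(b) = f(\sigma)$ the map $\coprod_{a} G_\sigma/\psi_a(G_{i(a)}) \to H_{f(\sigma)}/\psi_b(H_{i(b)})$ induced by $\Phi$ is a bijection --- provides unique lifting of $H(Z)$--paths to $G(Y)$--paths. A lift of a null-homotopic $H(Z)$--loop is then a null-homotopic $G(Y)$--loop, which forces $\ker \Phi_* = 1$.

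Third, I would deduce the developability equivalence together with the universal-cover statement by transporting group actions along $\Phi$. If $H(Z)$ is developable, then each $\iota^H_{f(\sigma)}$ is injective; together with injectivity of $\phi_\sigma$ this makes the left-hand side of the displayed identity, hence $\Phi_* \circ \iota^G_\sigma$, injective, forcing $\iota^G_\sigma$ to be injective for every $\sigma$, so $G(Y)$ is developable. Conversely, suppose $G(Y)$ is developable, with universal cover $X = \widetilde{G(Y)}$ carrying its $\pi_1(G(Y))$--action; via the injection $\Phi_*$ regard $\pi_1(G(Y))$ as a subgroup of $\pi_1(H(Z))$. I would extend the $\pi_1(G(Y))$--action on $X$ to an action of $\pi_1(H(Z))$, using the covering coset-bijections to prescribe how coset representatives of $\pi_1(G(Y))$ in $\pi_1(H(Z))$ permute the vertices and edges of $X$. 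One checks that $\pi_1(H(Z)) \bs X \cong Z$ and that the induced complex of groups is $H(Z)$; since $X$ is connected and simply connected, this exhibits $H(Z)$ as developable with $\widetilde{H(Z)} \cong X = \widetilde{G(Y)}$. The identification is by construction $\Phi_*$--equivariant, and uniqueness of the simply-connected development yields the asserted equivariant isomorphism $\widetilde{G(Y)} \to \widetilde{H(Z)}$.

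The main obstacle is the extension of the action in the third step: verifying that the coset-bijections of the covering patch together into a well-defined, associative action of $\pi_1(H(Z))$ on $X$ whose induced complex of groups is exactly $H(Z)$. This is where the full covering axioms must be used, and where the non-simple case is genuinely harder, since one must track the twisting elements $g_{a,b}$ of $H(Z)$ and the connecting elements $\phi(a)$ of $\Phi$ through the development and confirm the cocycle conditions. This bookkeeping --- rather than any single conceptual step --- is the technical heart of the argument, which is why the result is quoted from \cite{BH} and \cite{LT}.
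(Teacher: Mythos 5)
First, a point of comparison that matters here: the paper does not prove Theorem~\ref{t:coverings} at all. It is stated as an imported result, with the proof deferred to \cite{BH} (Ch.~III.$\mathcal{C}$) and \cite{LT}, so there is no internal argument to measure your proposal against; the right benchmark is the proof in those references. Judged against that, your sketch reconstructs the correct skeleton: the developability criterion (local groups inject into $\pi_1$), functoriality of $\pi_1$ under morphisms of complexes of groups, path lifting via the coset bijections, and transport of the action to get the converse direction and the isomorphism of universal covers. In particular your argument for the direction ``$H(Z)$ developable $\Rightarrow$ $G(Y)$ developable'' is complete and correct as written: injectivity of $\iota^H_{f(\sigma)} \circ \phi_\sigma = \Phi_* \circ \iota^G_\sigma$ forces injectivity of $\iota^G_\sigma$, and this does not even require knowing that $\Phi_*$ is injective (the commuting square holds up to an inner automorphism coming from basepoint choices, which is harmless for injectivity).

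The two remaining steps, however, are genuine gaps rather than bookkeeping, and they are precisely the content of \cite{LT}. (a) For injectivity of $\Phi_*$: unique path lifting does follow from the coset bijections together with nondegeneracy of $f$, but your key sentence ``a lift of a null-homotopic $H(Z)$--loop is a null-homotopic $G(Y)$--loop'' is the monodromy theorem for complexes of groups. One must verify that each elementary homotopy of decorated paths --- the relations identifying $a^+b^+$ with $g_{a,b}(ab)^+$ and $\psi_a(g)$ with $a^+ g a^-$, applied downstairs --- lifts to a homotopy upstairs; this is where the covering axioms interact with the twisting elements, and it is a substantive verification, not a formality. (b) For ``$G(Y)$ developable $\Rightarrow$ $H(Z)$ developable'': extending the $\pi_1(G(Y))$--action to $\pi_1(H(Z))$ ``using coset representatives'' risks circularity, since the natural home for such an extension is the development of $H(Z)$, whose very construction presupposes the injectivity $H_\tau \hookrightarrow \pi_1(H(Z))$ that you are trying to establish. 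The route taken in \cite{LT} avoids this: compose the universal covering $\widetilde{G(Y)} \to G(Y)$ (a covering by a complex of trivial groups over a connected, simply connected scwol) with $\Phi$, use the fact that a composition of coverings is a covering, and then prove that any covering of $H(Z)$ by a simply connected scwol is regular, with deck group acting on $\widetilde{G(Y)}$ with quotient $Z$, induced complex of groups isomorphic to $H(Z)$, and deck group isomorphic to $\pi_1(H(Z))$. That regularity argument is again powered by the monodromy theorem of (a). So your outline points at the right proof, but both load-bearing steps are exactly what the cited references supply rather than consequences of what you wrote.
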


\subsection{Definitions and notation}\label{ss:definitions}

We gather here definitions and notation needed for the statement and proof of
Theorem~\ref{t:group_action} above.  Throughout this section, $Y$ and $Z$ are scwols, $G(Y)=(G_\s,\psi_a)$ is a simple
complex of groups over $Y$, and $H(Z)=(H_\tau,\theta_a, h_{a,b})$ is a complex of groups over
$Z$.

\begin{definition}\label{d:morphism} Let $f: Y\to Z$ be a morphism of scwols (see Definition~\ref{d:morphism_scwols} above).  A \emph{morphism} $\Phi: G(Y) \to H(Z)$ over $f$ consists of: \begin{enumerate}
\item a homomorphism $\phi_\sigma: G_\sigma \to H_{f(\sigma)}$ for each $\sigma \in V(Y)$, called
the \emph{local map} at $\s$; and
\item\label{i:commuting} an element $\phi(a) \in H_{t(f(a))}$ for each $a \in E(Y)$, such that the following diagram commutes
\[\xymatrix{
G_{i(a)}   \ar[d]^-{\phi_{i(a)}} \ar[rrr]^{\psi_a} & & & G_{t(a)} \ar[d]^-{\phi_{t(a)}}
\\
H_{f(i(a))}  \ar[rrr]^{\Ad(\phi(a))\circ \theta_{f(a)}} & & & H_{f(t(a))}
}\]
and for all pairs of
composable edges $(a,b)$ in $E(Y)$, \[ \phi(ab) = \phi(a) \,\psi_a(\phi(b))h_{f(a),f(b)} . \]
\end{enumerate} \end{definition}

\noindent A morphism is \emph{simple} if each element $\phi(a)$ is trivial.  If $f$ is an isomorphism of scwols, and each $\phi_\sigma$ an isomorphism of the local groups, then $\Phi$ is an \emph{isomorphism of complexes of groups}.

We introduce the following, expected, definitions.  An \emph{automorphism} of $G(Y)$ is an
isomorphism $\Phi:G(Y) \to G(Y)$.  It is not hard to verify that the set of automorphisms of
$G(Y)$ forms a group under composition, which we denote $\Aut(G(Y))$ (see Section~III.$\mathcal{C}$.2.4 of~\cite{BH}
for the definition of composition of morphisms).  We then say that a group \emph{$H$ acts
on $G(Y)$} if there is a homomorphism \[ H \to \Aut(G(Y)).\]

Our notation is as follows.  Suppose $H$ acts on $G(Y)$.  Then in particular, $H$ acts on the
scwol $Y$ in the sense of Definition~\ref{d:action_on_scwol} above.  We write the action of $H$ on $Y$ as
$\s \mapsto h\cdot\s$ and $a \mapsto h\cdot a$, for $h \in H$, $\s \in V(Y)$ and $a \in E(Y)$.  The element
$h \in H$ induces the automorphism $\Phi^h$ of $G(Y)$.  The data for
$\Phi^h$ is a family $(\phi^h_\sigma)_{\sigma \in V(Y)}$ of group isomorphisms
$\phi^h_\sigma:G_\sigma \to G_{h\cdot\s}$, and a family of elements $(\phi^h(a))_{a \in E(Y)}$
with $\phi^h(a) \in G_{t(h \cdot a)}$, satisfying the definition of morphism above (Definition~\ref{d:morphism}).

We say that the $H$--action is \emph{by simple morphisms} if each $\Phi^h$ is simple, that is, if each $\phi^h(a) \in G_{t(h \cdot a)}$ is the trivial element.  Explicitly, for each $a \in E(Y)$ and each $h \in H$, the following diagram commutes. \[\xymatrix{
G_{i(a)}   \ar[d]^-{\phi^h_{i(a)}} \ar[rrr]^{\psi_a} & & & G_{t(a)} \ar[d]^-{\phi^h_{t(a)}}
\\
G_{h \cdot i(a)}  \ar[rrr]^{\psi_{h \cdot a}} & & & G_{h \cdot t(a)}
}\]   We note also that the composition of simple morphisms $\Phi^{h'} \circ \Phi^{h}$ is the simple morphism
$\Phi^{h'h}$ with local maps \begin{equation}\label{e:composition}\phi^{h' h}_\s = \phi^{h'}_{h\cdot\s}\circ \phi^{h}_\s.\end{equation}

Finally we recall the definition of a covering of complexes of groups.

\begin{definition}\label{d:covering} A morphism $\Phi:G(Y) \to H(Z)$ over a nondegenerate morphism of
scwols $f:Y\to Z$ (see Definition~\ref{d:morphism_scwols} above) is a
\emph{covering of complexes of groups} if further: \begin{enumerate}\item each $\phi_\sigma$ is
injective; and \item \label{i:covbijection} for each $\sigma \in V(Y)$ and $b \in E(Z)$ such that
$t(b) = f(\sigma)$, the map on cosets \[  \Phi_{\s/b}:\left(\coprod_{\substack{a \in f^{-1}(b)\\ t(a)=\sigma}} G_\sigma /
\psi_a(G_{i(a)})\right) \to H_{f(\sigma)} / \theta_b(H_{i(b)})\] induced by $g \mapsto
\phi_\sigma(g)\phi(a)$ is a bijection.\end{enumerate}\end{definition}

\subsection{The induced complex of groups and its properties}\label{ss:induced_cogs}

Suppose that a group $H$ acts by simple morphisms on a simple complex of groups $G(Y)=(G_\s,\psi_a)$.  In
this section we construct the complex of groups $H(Z)$ induced by this action, prove that $H(Z)$ is
well-defined up to isomorphism of complexes of groups and discuss faithfulness.

Let $Z$ be the quotient scwol $Z = H \bs Y$ and let $p:Y \to Z$ be the natural projection.  For each
vertex $\tau \in V(Z)$ choose a representative $\overline\tau \in V(Y)$ such that $p(\overline\tau) =
\tau$.  Let  $\Stab_H(\overline\tau)$ be the subgroup of $H$ fixing $\overline\tau$ and let $G_{\overline\tau}$ be the local
group of $G(Y)$ at $\overline\tau$.  Since the $H$--action is by simple morphisms, by Equation~\eqref{e:composition} above there is a group homomorphism $\zeta:\Stab_H(\overline\tau) \to \Aut(G_{\overline\tau})$ given by $\zeta(h) = \phi^h_{\overline\tau}$.  For each $\tau \in V(Z)$ we then define the local group $H_\tau$ to be the corresponding semidirect product of $G_{\overline\tau}$ by $\Stab_H(\overline\tau)$, that is, \[H_\tau := G_{\overline\tau} \rtimes_\zeta \Stab_H(\overline\tau) = G_{\overline\tau} \rtimes \Stab_H(\overline\tau).\]

For each edge $a \in E(Z)$ with $i(a) = \tau$ there is, since $H$ acts on $Y$ in the sense of
Definition~\ref{d:action_on_scwol} above, a unique edge $\overline{a} \in E(Y)$ such that
$p(\overline{a})=a$ and $i(\overline{a}) = \overline{i(a)} = \overline\tau$. For each $a \in E(Z)$
choose an element $h_a \in H$ such that $h_a\cdot t(\overline{a}) = \overline{t(a)}$.

\begin{lemma}\label{l:monom} Let $g \in G_{i(\overline{a})} = G_{\overline{i(a)}}$ and $h \in
\Stab_H\left(\overline{i(a)}\right)$.  Then the map \[ \theta_a: (g,h) \mapsto
(\phi^{h_a}_{t(\overline{a})} \circ \psi_{\overline{a}} (g), h_a h h_a^{-1}) \] is a monomorphism
$H_{i(a)} \to H_{t(a)}$. \end{lemma}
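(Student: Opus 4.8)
The plan is to verify, in order, that $\theta_a$ is well-defined (i.e.\ that it lands in $H_{t(a)}$), that it is a group homomorphism, and that it is injective. To lighten notation I write $\overline\tau_1 := i(\overline a) = \overline{i(a)}$ and $\overline\tau_2 := t(\overline a)$, and I use throughout that multiplication in a local group $H_\tau = G_{\overline\tau} \rtimes_\zeta \Stab_H(\overline\tau)$ is $(g,h)(g',h') = (g\,\phi^h_{\overline\tau}(g'),\, hh')$, with $\zeta(h) = \phi^h_{\overline\tau}$. For well-definedness, the $G$--component $\phi^{h_a}_{\overline\tau_2}(\psi_{\overline a}(g))$ lies in $G_{h_a.\overline\tau_2} = G_{\overline{t(a)}}$ by the defining property $h_a.\overline\tau_2 = \overline{t(a)}$ of $h_a$. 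For the $\Stab_H$--component, the key point is that any $h \in \Stab_H(\overline\tau_1)$ fixes the edge $\overline a$, by the no-inversions Condition~\eqref{i:no_inversions} of Definition~\ref{d:action_on_scwol}, and hence also fixes $\overline\tau_2 = t(\overline a)$; therefore $h_a h h_a^{-1}$ fixes $h_a.\overline\tau_2 = \overline{t(a)}$, so it lies in $\Stab_H(\overline{t(a)})$ as required. This same observation is what makes sense of the automorphisms $\phi^{h_1}_{\overline\tau_2}$ of $G_{\overline\tau_2}$ appearing below.

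The substantive step, and the one I expect to demand the most care, is the homomorphism property. I would expand both $\theta_a\big((g_1,h_1)(g_2,h_2)\big)$ and $\theta_a(g_1,h_1)\,\theta_a(g_2,h_2)$ using the semidirect-product rule and compare the two components. The $\Stab_H$--components agree immediately, since $(h_a h_1 h_a^{-1})(h_a h_2 h_a^{-1}) = h_a h_1 h_2 h_a^{-1}$. For the $G$--components two facts are needed. The first is the commuting square for the $H$--action by simple morphisms, which for the edge $\overline a$ and an element $h_1 \in \Stab_H(\overline\tau_1)$ (so that $h_1.\overline a = \overline a$) specialises to $\psi_{\overline a}\circ\phi^{h_1}_{\overline\tau_1} = \phi^{h_1}_{\overline\tau_2}\circ\psi_{\overline a}$. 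The second is the composition law $\phi^{h'}_{h.\s}\circ\phi^h_\s = \phi^{h'h}_\s$ of Equation~\eqref{e:composition}. Using the first fact to pull $\phi^{h_1}_{\overline\tau_1}$ through $\psi_{\overline a}$, and then the composition law to obtain $\phi^{h_a}_{\overline\tau_2}\circ\phi^{h_1}_{\overline\tau_2} = \phi^{h_a h_1}_{\overline\tau_2}$, the $G$--component of the left-hand side becomes $\phi^{h_a}_{\overline\tau_2}(\psi_{\overline a}(g_1))\cdot\phi^{h_a h_1}_{\overline\tau_2}(\psi_{\overline a}(g_2))$. On the right-hand side the multiplication in $H_{t(a)}$ twists the second factor by $\phi^{h_a h_1 h_a^{-1}}_{\overline{t(a)}}$; applying the composition law once more, with $\overline{t(a)} = h_a.\overline\tau_2$, gives $\phi^{h_a h_1 h_a^{-1}}_{\overline{t(a)}}\circ\phi^{h_a}_{\overline\tau_2} = \phi^{h_a h_1}_{\overline\tau_2}$, so the two $G$--components coincide. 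The main obstacle is essentially bookkeeping: keeping track of which vertex each $\phi$--isomorphism is based at, and invoking the no-inversions condition at the right moments so that $h_1$ fixes $\overline\tau_2$ and the composition law applies.

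Injectivity is then immediate. If $\theta_a(g,h) = (1,1)$, then the $\Stab_H$--component gives $h_a h h_a^{-1} = 1$, whence $h = 1$, while the $G$--component gives $\phi^{h_a}_{\overline\tau_2}(\psi_{\overline a}(g)) = 1$; since $\phi^{h_a}_{\overline\tau_2}$ is an isomorphism and $\psi_{\overline a}$ is a monomorphism of $G(Y)$, this forces $g = 1$, so $\theta_a$ is a monomorphism.
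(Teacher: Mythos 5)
Your proposal is correct and follows essentially the same route as the paper's proof: expand both sides of the homomorphism property in the semidirect products, use the no-inversions condition to see that $h_1$ fixes $\overline a$ and hence $t(\overline a)$, apply Equation~\eqref{e:composition} to collapse $\phi^{h_a h_1 h_a^{-1}}_{\overline{t(a)}}\circ\phi^{h_a}_{t(\overline a)}$, and reduce to the simple-morphism identity $\psi_{\overline a}\circ\phi^{h_1}_{\overline{i(a)}} = \phi^{h_1}_{t(\overline a)}\circ\psi_{\overline a}$. Your explicit well-definedness check and trivial-kernel injectivity argument are minor packaging differences; the substance matches the paper.
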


\begin{proof} We will show that $\theta_a$ is a group homomorphism.  Since $\phi^{h_a}_{t(\overline{a})}$, $\psi_{\overline{a}}$ and the conjugation $h \mapsto h_a h h_a^{-1}$ are all injective, the conclusion will then follow.

Let $g,g' \in G_{\overline{i(a)}}$ and $h,h' \in \Stab_H(\overline{i(a)})$.  Note that since $h$ and $h'$ fix $\overline{i(a)} = i(\overline{a})$, they fix the edge $\overline{a}$ and hence fix the vertex $t(\overline{a})$ as well.  We have
\begin{eqnarray*}
\theta_a((g,h)(g,h')) & = & \theta_a(g\phi^h_{\overline{i(a)}}(g'), hh') \\
& = & (\phi^{h_a}_{t(\overline{a})}\circ \psi_{\overline{a}}(g\phi^h_{\overline{i(a)}}(g')),h_ahh'h_a^{-1})
\end{eqnarray*}
while
\begin{eqnarray*}
\theta_a(g,h)\theta_a(g',h') & = & (\phi^{h_a}_{t(\overline{a})}\circ \psi_{\overline{a}}(g),h_ahh_a^{-1})(\phi^{h_a}_{t(\overline{a})}\circ \psi_{\overline{a}}(g'),h_ah'h_a^{-1})\\
& = & (\phi^{h_a}_{t(\overline{a})}\circ \psi_{\overline{a}}(g)\phi^{h_ahh_a^{-1}}_{\overline{t(a)}}\circ \phi^{h_a}_{t(\overline{a})}\circ \psi_{\overline{a}}(g'),h_ahh'h_a^{-1}).
\end{eqnarray*}
After applying Equation~\eqref{e:composition} above to the map $\phi^{h_a h h_a^{-1}}$, and some cancellations, it remains to show that
\[\psi_{\overline{a}} \circ \phi^h_{\overline{i(a)}}(g') = \phi^h_{t(\overline{a})} \circ \psi_{\overline{a}}(g').\]
This follows from the fact that $\Phi^h$ is a simple morphism with $h\cdot \overline{a} = \overline{a}$.
\end{proof}

To complete the construction of $H(Z)$, for each composable pair of edges $(a,b)$ in $E(Z)$, define
\[h_{a,b} = h_a h_b h_{ab}^{-1}.\]
One checks that $h_{a,b} \in \Stab_H(\overline{t(a)})$ hence $(1,h_{a,b}) \in H_{t(a)}$.  By abuse of notation we write
$h_{a,b}$ for $(1, h_{a,b})$.

\begin{proposition} The datum $H(Z) = (H_\s, \theta_a, h_{a,b})$ is a complex of groups.
\end{proposition}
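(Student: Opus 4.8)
The plan is to verify the two axioms in the definition of a complex of groups that remain after the construction: the twisting relation $\Ad(h_{a,b}) \circ \theta_{ab} = \theta_a \circ \theta_b$ for every composable pair $(a,b)$ in $E(Z)$, and the cocycle condition $\theta_a(h_{b,c})\, h_{a,bc} = h_{a,b}\, h_{ab,c}$ for every composable triple. The local groups $H_\tau$ are groups by construction and each $\theta_a$ is a monomorphism by Lemma~\ref{l:monom}, so these two relations carry all the remaining content. Before checking them I would confirm that the twisting elements lie in the correct local groups, i.e. that $h_{a,b} = h_a h_b h_{ab}^{-1} \in \Stab_H(\overline{t(a)})$, so that $(1,h_{a,b}) \in H_{t(a)}$.

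The geometric input, which I expect to be the crux of the whole argument, is the single identity relating the chosen lifts,
\[ h_b . \overline{ab} = \overline{a} \cdot (h_b . \overline{b}). \]
To prove it I would observe that both sides are lifts of $ab \in E(Z)$: the right-hand composition is defined because $t(h_b.\overline b) = h_b.t(\overline b) = \overline{t(b)} = \overline{i(a)} = i(\overline a)$, and it projects to $a \cdot b$; the left-hand side projects to $ab$ since the projection $p$ is $H$--invariant. Both have initial vertex $h_b . \overline{i(b)}$, and two lifts of a single edge sharing an initial vertex must coincide, which is exactly the no-inversions hypothesis of Definition~\ref{d:action_on_scwol}. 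Applying the terminal-vertex map to this identity yields $h_b . t(\overline{ab}) = t(\overline a)$, so $h_a h_b$ and $h_{ab}$ both carry $t(\overline{ab})$ to $\overline{t(a)}$; conjugating the resulting stabiliser element by $h_{ab}$ then shows $h_{a,b} \in \Stab_H(\overline{t(a)})$, confirming the claim.

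For the twisting relation I would evaluate both composites on $(g,h) \in H_{i(b)}$ and compare the $\Stab_H$-- and $G_{\overline{t(a)}}$--components separately. The $\Stab_H$--components agree by the elementary cancellation $h_{a,b}h_{ab} = h_a h_b$ together with the fact that $\Ad(1,h_{a,b})$ acts as conjugation by $h_{a,b}$ on the $H$--factor. For the $G$--components, injectivity of $\phi^{h_a}_{t(\overline a)}$ reduces the goal to the identity $\phi^{h_b}_{t(\overline{ab})} \circ \psi_{\overline{ab}} = \psi_{\overline a} \circ \phi^{h_b}_{t(\overline b)} \circ \psi_{\overline b}$, once the nested maps on the $\theta_{ab}$--side are collapsed by Equation~\eqref{e:composition} applied twice (rewriting $\phi^{h_{a,b}}_{\overline{t(a)}} \circ \phi^{h_{ab}}_{t(\overline{ab})}$ as $\phi^{h_a}_{t(\overline a)} \circ \phi^{h_b}_{t(\overline{ab})}$, using $h_{a,b}h_{ab} = h_a h_b$ and $h_b.t(\overline{ab}) = t(\overline a)$). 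This reduced identity is then obtained by feeding the lift identity into the commuting square of the simple morphism $\Phi^{h_b}$: apply that square to $\overline{ab}$, split $\psi_{h_b.\overline{ab}} = \psi_{\overline a} \circ \psi_{h_b.\overline b}$ using simplicity of $G(Y)$, and apply the square again to $\overline b$, using $i(\overline{ab}) = i(\overline b)$.

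The cocycle condition is the cheapest step. Since every twisting element has trivial $G$--component and $\theta_a(1,k) = (1, h_a k h_a^{-1})$, each term in $\theta_a(h_{b,c})\, h_{a,bc} = h_{a,b}\, h_{ab,c}$ lives in the $\Stab_H$--factor, so multiplication is just multiplication in $H$ and, after substituting $h_{a,b} = h_a h_b h_{ab}^{-1}$ and the analogous definitions of $h_{a,bc}$, $h_{ab,c}$, $h_{b,c}$, both sides collapse to $h_a h_b h_c h_{abc}^{-1}$ by cancellation. The main obstacle throughout is the bookkeeping of which local group each map lands in and the disciplined use of the lift identity and Equation~\eqref{e:composition}; once those are in place the remaining verifications are mechanical.
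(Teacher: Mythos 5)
Your proposal is correct and follows essentially the same route as the paper: after the component-wise comparison reduces the twisting relation (via Equation~\eqref{e:composition} and cancellation of $\phi^{h_a}_{t(\overline a)}$) to the identity $\phi^{h_b}_{t(\overline{ab})} \circ \psi_{\overline{ab}} = \psi_{\overline a} \circ \phi^{h_b}_{t(\overline b)} \circ \psi_{\overline b}$, you prove it from the decomposition of $\overline{ab}$ and simplicity of $G(Y)$ and of the morphisms $\Phi^{h_b}$, exactly as in the paper --- your lift identity $h_b.\overline{ab} = \overline{a}\cdot(h_b.\overline{b})$ is just the $h_b$--translate of the paper's statement that $\overline{ab}$ is the composition of $h_b^{-1}.\overline{a}$ and $\overline{b}$, and the cocycle condition collapses by the same cancellation. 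The only (welcome) difference is that you spell out details the paper leaves implicit, namely the no-inversions argument for the lift identity and the check that $h_{a,b} \in \Stab_H(\overline{t(a)})$.
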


\begin{proof} Given Lemma~\ref{l:monom} above, it remains to show that for each pair of composable edges
$(a,b)$ in $E(Z)$, \begin{equation}\label{e:thetas} \Ad(h_{a,b}) \circ \theta_{ab} = \theta_a
\circ \theta_b,\end{equation} and that the cocycle condition holds.  Let $(g,h) \in H_{i(b)}=G_{\overline{i(b)}} \rtimes
\Stab_H(\overline{i(b)})$.  We compute \begin{eqnarray*}\Ad(h_{a,b}) \circ \theta_{ab} (g,h) & = &
(\phi^{h_{a,b}}_{\overline{t(ab)}}\circ \phi^{h_{ab}}_{t(\overline{ab})} \circ \psi_{\overline{ab}}(g),h_{a,b}h_{ab} h
h_{ab}^{-1}h_{a,b}^{-1}) \end{eqnarray*} while
\begin{eqnarray*}\theta_a \circ \theta_b (g,h) & = & (\phi^{h_a}_{t(\overline{a})} \circ
\psi_{\overline{a}} \circ  \phi^{h_b}_{t(\overline{b})} \circ \psi_{\overline{b}}(g), h_a h_b h
h_b^{-1} h_a^{-1}).\end{eqnarray*}  By definition of $h_{a,b}$ it remains to show equality in the first component.

By Equation~\eqref{e:composition} and the definition of $h_{a,b}$,
\[ \phi^{h_{a,b}}_{\overline{t(ab)}} = \phi^{h_a}_{t(\overline{a})} \circ \phi^{h_b}_{t(\overline{ab})} \circ \phi^{h_{ab}^{-1}}_{\overline{t(ab)}}. \]
Hence it suffices to prove
\begin{equation}\label{e:useful}\phi^{h_b}_{t(\overline{ab})}
\circ \psi_{\overline{ab}} = \psi_{\overline{a}} \circ \phi^{h_b}_{t(\overline{b})} \circ \psi_{\overline{b}}. \end{equation}
Since $G(Y)$ is a simple complex of groups, and $\overline{ab}$ is the composition of the edges
$h_b^{-1}.\overline{a}$ and $\overline{b}$, we have  \[ \psi_{\overline{ab}} = \psi_{h_b^{-1}
\overline{a}} \circ \psi_{\overline{b}}.\] Applying this, and the fact that $\phi^{h_b}_{t(\overline{b})}$ is a simple morphism on the edge $h_b^{-1} \overline{a}$, we have \[ \phi^{h_b}_{t(\overline{ab})} \circ \psi_{\overline{ab}}  = \phi^{h_b}_{t(\overline{ab})} \circ \psi_{h_b^{-1} \overline{a}} \circ \psi_{\overline{b}} =
 \psi_{\overline{a}} \circ \phi^{h_b}_{t(\overline{b})} \circ \psi_{\overline{b}}.\]
Hence Equation~\eqref{e:useful} holds.

The cococycle condition follows from the definition of $h_{a,b}$.  We conclude that $H(Z)$ is a complex of groups.
\end{proof}

We now have a complex of groups $H(Z)$ induced by the action of $H$ on $G(Y)$.  This construction
depended on choices of lifts $\overline\tau$ and of elements $h_a \in H$.  We next show (in a generalisation of Section~III.$\mathcal{C}$.2.9(2) of~\cite{BH}) that:

\begin{lemma}\label{l:well_defined} The complex of groups $H(Z)$ is well-defined up to isomorphism of complexes of groups.
\end{lemma}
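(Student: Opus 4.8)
The plan is to show that two runs of the construction---using choices $(\overline\tau, h_a)_{\tau\in V(Z),\, a\in E(Z)}$ and $(\overline\tau', h_a')$, and producing complexes of groups $H(Z)=(H_\tau,\theta_a,h_{a,b})$ and $H'(Z)=(H'_\tau,\theta'_a,h'_{a,b})$ over the common scwol $Z=H\bs Y$---are related by an isomorphism of complexes of groups. Since both are built over the same scwol, the underlying morphism of scwols will be $f=\id_Z$; by Definition~\ref{d:morphism} it then suffices to produce a family of group isomorphisms $\phi_\tau:H_\tau\to H'_\tau$ together with edge elements $\phi(a)\in H'_{t(a)}$ satisfying the commuting square and the edge-composition identity, and any such data is automatically an isomorphism of complexes of groups.

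First I would fix, for each $\tau\in V(Z)$, an element $k_\tau\in H$ with $k_\tau.\overline\tau=\overline\tau'$, which exists since $p(\overline\tau)=p(\overline\tau')=\tau$. The element $k_\tau$ gives an isomorphism $\phi^{k_\tau}_{\overline\tau}:G_{\overline\tau}\to G_{\overline\tau'}$ of local groups of $G(Y)$, while conjugation $h\mapsto k_\tau h k_\tau^{-1}$ carries $\Stab_H(\overline\tau)$ isomorphically onto $\Stab_H(\overline\tau')$. I then define
\[ \phi_\tau:\ H_\tau=G_{\overline\tau}\sd\Stab_H(\overline\tau)\ \longrightarrow\ H'_\tau=G_{\overline\tau'}\sd\Stab_H(\overline\tau'),\qquad (g,h)\mapsto\bigl(\phi^{k_\tau}_{\overline\tau}(g),\,k_\tau h k_\tau^{-1}\bigr). \]
That $\phi_\tau$ respects the semidirect-product structure reduces to the identity $\phi^{k_\tau}_{\overline\tau}\circ\phi^{h}_{\overline\tau}=\phi^{k_\tau h k_\tau^{-1}}_{\overline\tau'}\circ\phi^{k_\tau}_{\overline\tau}$ for $h\in\Stab_H(\overline\tau)$, which is a direct application of Equation~\eqref{e:composition} (both sides equal $\phi^{k_\tau h}_{\overline\tau}$). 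Hence each $\phi_\tau$ is a group isomorphism.

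For the edge data, observe that the lift $\overline{a}'$ of $a$ with initial vertex $\overline{i(a)}'$ equals $k_{i(a)}.\overline{a}$, by uniqueness of the edge lift with prescribed initial vertex. Set
\[ c_a:=k_{t(a)}\,h_a\,k_{i(a)}^{-1}\,(h_a')^{-1}. \]
A short check shows $c_a.\overline{t(a)}'=\overline{t(a)}'$, so $c_a\in\Stab_H(\overline{t(a)}')$ and $\phi(a):=(1,c_a)\in H'_{t(a)}$ is a legitimate edge element. The heart of the proof is the commuting square of Definition~\ref{d:morphism}, namely $\phi_{t(a)}\circ\theta_a=\Ad(\phi(a))\circ\theta'_a\circ\phi_{i(a)}$. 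On the $G$-component this unwinds, using Equation~\eqref{e:composition} repeatedly together with the simple-morphism intertwining $\phi^{k_{i(a)}}_{t(\overline a)}\circ\psi_{\overline a}=\psi_{\overline a'}\circ\phi^{k_{i(a)}}_{\overline{i(a)}}$ for $\Phi^{k_{i(a)}}$, to the single group-element relation $k_{t(a)}h_a=c_a\,h_a'\,k_{i(a)}$, which holds by the definition of $c_a$; the $\Stab$-components match by the same conjugation bookkeeping.

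Finally I would verify the edge-composition condition $\phi(ab)=\phi(a)\,\theta'_a(\phi(b))\,h'_{a,b}$, which again collapses to a relation among the $k_\tau$, the $h_a$, $h_a'$, and the twisting elements $h_{a,b}=h_a h_b h_{ab}^{-1}$ and $h'_{a,b}=h_a' h_b' (h'_{ab})^{-1}$, and follows from the definitions. Since $f=\id_Z$ is an isomorphism of scwols and each $\phi_\tau$ is an isomorphism of local groups, the resulting morphism $H(Z)\to H'(Z)$ is an isomorphism of complexes of groups. I expect the main obstacle to be the commuting-square verification: the various lifts $\overline a$, $\overline a'$ and the way $\phi^{k_\tau}$ conjugates the monomorphisms $\theta_a$ must be organised carefully so that everything collapses to the cocycle identity defining $c_a$.
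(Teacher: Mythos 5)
Your proposal is correct and follows essentially the same route as the paper: you choose elements $k_\tau$ with $k_\tau.\overline\tau=\overline\tau'$, define the local isomorphisms $(g,h)\mapsto(\phi^{k_\tau}_{\overline\tau}(g),k_\tau h k_\tau^{-1})$, and take the edge elements $(1,\,k_{t(a)}h_a k_{i(a)}^{-1}(h_a')^{-1})$, which are exactly the data $\lambda_\tau$ and $\lambda(a)$ in the paper's proof. The only difference is that you sketch the verifications (homomorphism property via Equation~\eqref{e:composition}, the commuting square, and the edge-composition identity) which the paper declares straightforward, citing 2.9(2) of~\cite{BH} for the fact that the edge element lies in $H'_{t(a)}$.
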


\begin{proof} Suppose we made a different choice of lifts $\overline\tau'$ and elements $h_a'$,
resulting in a complex of groups $H'(Z) = (H'_\tau, \theta'_a, h'_{a,b})$.  An isomorphism
$\Lambda = (\lambda_\s, \lambda(a))$ from $H(Z)$ to $H'(Z)$ over the identity map $Z \to Z$ is
constructed as follows.  For each $\tau \in V(Z)$, choose an element $k_\tau \in H$ such that
$k_\tau \cdot \overline\tau = \overline\tau'$, and define a group isomorphism $\lambda_\tau:H_\tau \to
H'_\tau$ by  \[\lambda_\tau(g,h) = (\phi^{k_\tau}_{\overline\tau}(g), k_\tau h k_\tau^{-1}).\] For
each $a \in E(Z)$, define $\lambda(a) = (1, k_{t(a)} h_a k^{-1}_{i(a)} {h'_a}^{-1})$.  Note that by ~III.$\mathcal{C}$.2.9(2)
of~\cite{BH}, $\lambda(a) \in H'_{t(a)}$.

The verification that $\Lambda = (\lambda_\s,\lambda(a))$ is an isomorphism of complexes of groups is straightforward.\end{proof}

We remind the reader that faithfulness of a complex of groups is defined in the final paragraph of Section~\ref{ss:complexes_of_groups} above.

\begin{lemma}\label{l:faithful} If $G(Y)$ is faithful and the $H$--action on $Y$ is faithful then $H(Z)$ is faithful.
\end{lemma}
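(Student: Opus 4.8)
The plan is to deduce faithfulness of $H(Z)$ from that of $G(Y)$ together with the structural facts recorded in Theorems~\ref{t:group_action} and~\ref{t:coverings}, using only the parts of those theorems concerning the induced covering and the fundamental group (established independently in Sections~\ref{ss:induced_covering} and~\ref{ss:fund_group} below), so as not to invoke the faithfulness assertion itself. Write $\G = \pi_1(G(Y))$ and let $X = \wt{G(Y)}$ be its universal cover, so that by definition $Y \cong \G \bs X$ and, since $G(Y)$ is faithful, $\G$ acts faithfully on $X$. Once the covering $G(Y) \to H(Z)$ has been constructed, Theorem~\ref{t:coverings} shows that $H(Z)$ is developable and supplies a monomorphism $\G \hookrightarrow \pi_1(H(Z))$ together with an equivariant isomorphism $\wt{G(Y)} \cong \wt{H(Z)}$. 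I would use this isomorphism to identify $\wt{H(Z)}$ with $X$ and to regard $\G$ as a subgroup of $\pi_1(H(Z))$ acting on $X$ exactly as before. By the splitting $\pi_1(H(Z)) \cong \G \rtimes H$, this subgroup $\G$ is normal, with quotient $H$.

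Next I would analyse the kernel of the action of $\pi_1(H(Z)) \cong \G \rtimes H$ on $X$ via the projection $X \to \G \bs X = Y$. Because $\G$ is normal in $\G \rtimes H$, this action descends to an action of the quotient $H$ on $Y$, and the key point (see below) is that this descended action is precisely the original $H$--action on $Y$, compatibly with the identifications $Z = H \bs Y = (\G \rtimes H) \bs X$. Granting this, suppose $(\gamma, h) \in \G \rtimes H$ acts trivially on $X$. Then it acts trivially on the quotient $Y$; since its image in $H$ is $h$, the element $h$ acts trivially on $Y$. As the $H$--action on $Y$ is faithful, $h = 1$, so $(\gamma, h) = \gamma \in \G$. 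But $\gamma$ acts trivially on $X$ and $\G$ acts faithfully on $X$, whence $\gamma = 1$. Thus the kernel is trivial and $H(Z)$ is faithful.

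The main obstacle is the claim that the $H$--action on $Y = \G \bs X$ obtained as the quotient $(\G \rtimes H)/\G$ coincides with the $H$--action on $Y$ used to build $H(Z)$. I would verify this by tracking the explicit realisation of the embedding $\G \rtimes H \hookrightarrow \pi_1(H(Z))$ in $\Aut(X)$ from Section~\ref{ss:fund_group}, together with the equivariance in Theorem~\ref{t:coverings}: the composite $X \to \G \bs X = Y \to H \bs Y = Z$ must agree with the quotient map $X \to (\G \rtimes H) \bs X = Z$, and chasing the generators of the $H$--factor through this identification shows that they act on $Y$ by the prescribed scwol automorphisms. Once this compatibility is in place, the remaining verifications are formal.
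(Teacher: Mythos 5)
Your argument is correct in outline, but it takes a genuinely different route from the paper. The paper's own proof is a two-line citation: faithfulness of $H(Z)$ follows from the explicit construction of $H(Z)$ (whose local groups are the semidirect products $G_{\overline\tau} \rtimes \Stab_H(\overline\tau)$) together with the characterisation of faithful complexes of groups in Proposition 38 of~\cite{LT}; no analysis of $\pi_1(H(Z))$ as an abstract group is made. You instead argue globally: writing $\G = \pi_1(G(Y))$ and $X = \wt{H(Z)} \cong \wt{G(Y)}$, you use the covering of Section~\ref{ss:induced_covering}, the splitting $\pi_1(H(Z)) \cong \G \rtimes H$ of Section~\ref{ss:fund_group} (in which $\G$ is exactly the kernel of the projection to $H$), and Theorem~\ref{t:coverings} to show that the kernel of the $\pi_1(H(Z))$--action on $X$ meets $\G$ trivially and projects into the kernel of the $H$--action on $Y$. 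This is self-contained relative to the rest of Section~\ref{s:group_actions}, avoids the external citation, and makes transparent exactly where each of the two faithfulness hypotheses is used; there is no circularity, since the covering and the splitting are established independently of this lemma.

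The one step you should not underestimate is the compatibility claim you yourself flag as the main obstacle: that the action of $H \cong \pi_1(H(Z))/\G$ on $\G \bs X$, transported through the identification $\G \bs X \cong Y$, agrees with (or is at least intertwined by a fixed scwol isomorphism with) the original $H$--action on $Y$. The purely formal consequence of Theorem~\ref{t:coverings} --- that the identification commutes with the projections to $Z$ --- only says the two $H$--actions on $Y$ have the same orbits, and that is genuinely insufficient: two actions of the same group with the same orbit decomposition need not have the same kernel (e.g.\ $C_2 \times C_2$ acting on four points in two $2$--point orbits can act faithfully or with kernel of order $2$). So the chase you describe is unavoidable, but it does go through: describing $\wt{H(Z)}$ as the development of $Z$ with respect to the canonical morphism $H(Z) \to \pi_1(H(Z))$, and using that the composite $H_\tau \to \pi_1(H(Z)) \to H$ is the projection $(g,h) \mapsto h$ from the proof of Proposition~\ref{p:fund_gp_splits}, one computes that $\G \bs \wt{H(Z)}$ has vertex set $\coprod_{\tau \in V(Z)} H/\Stab_H(\overline\tau)$ with $H$ acting by left translation, which is $H$--equivariantly the vertex set of $Y$, and the edges match in the same way. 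With that verification written out, your kernel argument is complete.
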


\begin{proof} This follows from the construction of $H(Z)$, and the characterisation of faithful
complexes of groups in Proposition 38 of~\cite{LT}.
\end{proof}

\subsection{The induced covering}\label{ss:induced_covering}

Suppose $H$ acts by simple morphisms on a simple complex of groups $G(Y)$, inducing a complex of groups $H(Z)$ as in Section~\ref{ss:induced_cogs} above.  In this section we construct a covering of complexes of groups $\Lambda:G(Y) \to H(Z)$ over the quotient map $p:Y \to Z$.

For $\s \in V(Y)$, the local maps $\lambda_\s:G_\s \to H_{p(\s)}$ are defined as follows.  Recall
that for each vertex $\tau \in V(Z)$ we chose a lift $\overline\tau \in V(Y)$.  Now for each $\s \in V(Y)$, we choose $k_\s \in H$ such that $k_\s \cdot \s = \overline{p(\s)}$.  Hence $\phi^{k_\s}_\s$ is an isomorphism $G_\s \to G_{\overline{p(\s)}}$.  The local map $\lambda_\s:G_\s \to
H_{p(\s)}$ is then defined by \[\lambda_\s: g \mapsto (\phi^{k_\s}_\s(g), 1).\]  Note
that each $\lambda_\s$ is injective.

For each edge $a \in E(Y)$, define \[\lambda(a) = (1, k_{t(a)}k_{i(a)}^{-1} h_b^{-1})\] where
$p(a) = b \in E(Z)$.  Note that, since $H$ acts on $Y$ in the sense of
Definition~\ref{d:action_on_scwol} above, we have $k_{i(a)}\cdot a = \overline{b}$
hence $k_{t(a)}k_{i(a)}^{-1} h_b^{-1}$ fixes $\overline{t(b)}$. Thus $\lambda(a) \in
H_{t(b)}$ as required.

\begin{proposition}\label{p:covering} The map $\Lambda = (\lambda_\s,\lambda(a))$ is a covering of complexes of groups. \end{proposition}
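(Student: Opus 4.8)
The plan is to verify directly that $\Lambda=(\lambda_\s,\lambda(a))$ meets the three requirements of Definition~\ref{d:covering}: that it is a morphism of complexes of groups in the sense of Definition~\ref{d:morphism}, that each local map $\lambda_\s$ is injective, and that the coset map $\Lambda_{\s/b}$ of condition~\eqref{i:covbijection} is a bijection. Injectivity of each $\lambda_\s$ is immediate, since $\lambda_\s(g)=(\phi^{k_\s}_\s(g),1)$ and $\phi^{k_\s}_\s$ is an isomorphism. So the substantive work is to establish the morphism identities and then the covering bijection.

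First I would check that $\Lambda$ is a morphism. For the commuting square of Definition~\ref{d:morphism} one must show $\lambda_{t(a)}\circ\psi_a=\Ad(\lambda(a))\circ\theta_{p(a)}\circ\lambda_{i(a)}$ for each $a\in E(Y)$. Writing $b=p(a)$ and expanding both sides using the definition of $\lambda_\s$, the definition of $\theta_b$ from Lemma~\ref{l:monom}, and the relation $k_{i(a)}.a=\overline b$, this reduces to an identity among the isomorphisms $\phi^h$, which follows from the composition rule~\eqref{e:composition} together with the defining property of the simple morphisms $\Phi^h$ (that $\phi^h_{t(a)}\circ\psi_a=\psi_{h.a}\circ\phi^h_{i(a)}$). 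For the edge-composition identity, I note that each $\lambda(a)=(1,k_{t(a)}k_{i(a)}^{-1}h_{p(a)}^{-1})$ has trivial $G$--component in $H_{t(p(a))}$, so the conjugations act trivially and the required identity collapses, for a composable pair of edges in $Y$, to the defining relation $h_{a,b}=h_ah_bh_{ab}^{-1}$ among the twisting elements of $H(Z)$.

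The main obstacle is the covering bijection of condition~\eqref{i:covbijection}. Since the isomorphisms $\phi^\eta$ intertwine the entire structure, the condition at a vertex $\s$ is equivalent to the condition at any $H$--translate of $\s$; it therefore suffices to treat $\s=\overline\tau$ with $\tau=p(\s)$ and $k_\s=1$, so that $\lambda_\s(g)=(g,1)$ and $H_\tau=G_{\overline\tau}\rtimes\Stab_H(\overline\tau)$. Using that $H$ acts transitively on the fibre $p^{-1}(b)$, every edge $a\in p^{-1}(b)$ with $t(a)=\overline\tau$ has the form $a=(h_b\kappa).\overline b$ for a unique $\kappa\in\Stab_H(t(\overline b))/\Stab_H(\overline b)$; taking the admissible choice $k_{i(a)}=\kappa^{-1}h_b^{-1}$, the map $g\mapsto\lambda_\s(g)\lambda(a)$ sends the coset $g\,\psi_a(G_{i(a)})$ to $(g,\,h_b\kappa h_b^{-1})\,\theta_b(H_{i(b)})$. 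I would then exploit the semidirect-product structure of $H_\tau$ to check bijectivity one direction at a time. In the $\Stab_H(\overline\tau)$--direction, the stabiliser component of $\theta_b(H_{i(b)})$ equals $h_b\Stab_H(\overline b)h_b^{-1}$, and since $h_b.t(\overline b)=\overline\tau$ gives $\Stab_H(\overline\tau)=h_b\Stab_H(t(\overline b))h_b^{-1}$, the elements $h_b\kappa h_b^{-1}$ form a transversal for this subgroup. In the $G_{\overline\tau}$--direction, the equivariance relation $\psi_a(G_{i(a)})=\phi^{h_b\kappa}_{t(\overline b)}(\psi_{\overline b}(G_{\overline{i(b)}}))$, deduced from~\eqref{e:composition}, matches the residual $G_{\overline\tau}$--freedom with the coset $G_{\overline\tau}/\psi_a(G_{i(a)})$. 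Together these give injectivity and surjectivity of $\Lambda_{\s/b}$.

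I expect the interleaving of the two directions inside the single quotient $H_\tau/\theta_b(H_{i(b)})$ to be the delicate point, since the cross-terms of the semidirect product couple the $\Stab_H(\overline\tau)$-- and $G_{\overline\tau}$--components; one must verify that the twisting by $\kappa$ is precisely absorbed by the corresponding change in $\psi_a(G_{i(a)})$, so that the bijection is independent of the choices of $\overline\tau$, $h_b$ and $k_{i(a)}$. This argument generalises the covering constructed in Section~2.9 of~\cite{BH}, from a trivial complex of groups on a scwol to the complex of groups induced by a group action, the new ingredient being the nontrivial local groups $G_{\overline\tau}$ carried through the semidirect products.
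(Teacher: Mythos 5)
Your proposal is correct, and the underlying algebra is the same as the paper's: both treat the morphism identities and injectivity of the $\lambda_\s$ as routine, and both prove bijectivity of $\Lambda_{\s/b}$ by splitting the semidirect product $H_\tau = G_{\overline\tau}\rtimes\Stab_H(\overline\tau)$ into its two components, with the stabiliser component pinning down the edge $a$ and the local-group component, via the equivariance relation $\psi_{h.\overline{b}}\circ\phi^h_{i(\overline{b})}=\phi^h_{t(\overline{b})}\circ\psi_{\overline{b}}$ together with Equation~\eqref{e:composition}, pinning down the coset of $\psi_a(G_{i(a)})$. The organizational difference is real, though: the paper verifies the coset bijection directly at an arbitrary vertex $\s$ with the already-fixed choices $k_\s$, $k_{i(a)}$, $h_b$ --- injectivity by comparing stabiliser components and invoking the scwol-action axiom of Definition~\ref{d:action_on_scwol} (an element fixing $i(a)$ fixes $a$) to force $a=a'$, surjectivity by exhibiting the edge $a$ with $k_\s.a = hh_b.\overline{b}$ --- whereas you first normalise to $\s=\overline\tau$, $k_\s=1$, $k_{i(a)}=\kappa^{-1}h_b^{-1}$ and parametrise the fibre $\{a\in p^{-1}(b) : t(a)=\overline\tau\}$ by $\Stab_H(t(\overline{b}))/\Stab_H(\overline{b})$. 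Your normalisation buys a transparent counting argument (the elements $h_b\kappa h_b^{-1}$ form a transversal of the stabiliser component of $\theta_b(H_{i(b)})$ in $\Stab_H(\overline\tau)$), but it costs you the reduction step you flag at the end, which the paper's formulation avoids entirely. That step does close, and easily: if $k_{i(a)}$ is replaced by $uk_{i(a)}$ with $u\in\Stab_H(\overline{i(b)})$, then the two candidate images differ by $(1,h_bu^{-1}h_b^{-1})=\theta_b(1,u^{-1})\in\theta_b(H_{i(b)})$, so the image coset is literally unchanged; replacing $k_\s$ by $vk_\s$ with $v\in\Stab_H(\overline\tau)$ post-composes $\Lambda_{\s/b}$ with left multiplication by $(1,v)$, a bijection of cosets; and translating $\s$ to $h.\s$ precomposes with the bijections induced by $\phi^h_\s$. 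With those three observations supplied, your argument is a complete and slightly more structured rendering of the same proof.
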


\begin{proof} It may be checked that $\Lambda$ is a morphism of complexes of groups.  As noted, each
of the local maps $\lambda_\s$ is injective. It remains to show that for each $\sigma \in V(Y)$ and
$b\in E(Z)$ such that $t(b) = p(\sigma)=\tau$, the map on cosets \[  \Lambda_{\s/b}:\left(\coprod_{\substack{a \in p^{-1}(b)\\
t(a)=\sigma}} G_\sigma / \psi_a(G_{i(a)})\right) \to H_{\tau} / \theta_b(H_{i(b)})\] induced by $g
\mapsto \lambda_\s(g)\lambda(a)=(\phi^{k_\s}_\s(g), k_\s k_{i(a)}^{-1} h_b^{-1})$ is a bijection.

We first show that $\Lambda_{\s/b}$ is injective.  Suppose $a$ and $a'$ are in $p^{-1}(b)$ with
$t(a)=t(a')=\s$, and suppose $g, g' \in G_\s$ with $g$ representing a coset of
$\psi_{a}(G_{i(a)})$ in $G_\s$ and $g'$ a coset of $\psi_{a'}(G_{i(a')})$ in $G_\s$.  Assume that
$\lambda_\s(g)\lambda(a)$ and $\lambda_\s(g')\lambda(a')$ belong to the same coset of
$\theta_b(H_{i(b)})$ in $H_\tau$.

Looking at the second component of the semidirect product $H_\tau$, it follows from the definition of
$\theta_b$ (Lemma~\ref{l:monom} above) that for some $h \in \Stab_H({\overline{i(b)}})$, \[ k_\s
k_{i(a)}^{-1} h_b^{-1}  =  \left(k_\s k_{i(a')}^{-1} h_b^{-1}\right) \left( h_b h h_b^{-1}\right)
\\   = k_\s k_{i(a')}^{-1} h h_b^{-1}.\] Thus $k_{i(a')}k_{i(a)}^{-1}=h$ fixes $\overline{i(b)}$.
Hence $k_{i(a)}^{-1}k_{i(a')}$ fixes $k_{i(a)}^{-1}\overline{i(b)} = i(a)$, and so
$k_{i(a)}^{-1}k_{i(a')}$ fixes $a$.  Thus $k_{i(a')} \cdot a = k_{i(a)} \cdot a = \overline{b} = k_{i(a')} \cdot a'$,
hence $a = a'$.

Looking now at the first component of $\lambda_\s(g)\lambda(a)$ and
$\lambda_\s(g')\lambda(a')=\lambda_\s(g')\lambda(a)$ in the semidirect product $H_\tau$, by
definition of $\theta_b$, for some $x \in G_{\overline{i(b)}}$ we have \begin{eqnarray*}
\phi^{k_\s}_\s(g)  & = & \phi^{k_\s}_\s(g')\phi^{k_\s k_{i(a)}^{-1} h_{b}^{-1}}_{\overline{t(b)}}
\circ \phi^{h_b}_{t(\overline{b})} \circ \psi_{\overline{b}}(x)  \\ & = &
\phi^{k_\s}_\s(g')\phi^{k_\s}_{\s} \circ \phi^{k_{i(a)}^{-1}}_{t(\overline{b})} \circ
\psi_{\overline{b}}(x).\end{eqnarray*} Since $\phi^{k_\s}_{\s}$ is an isomorphism, and $k_{i(a)}^{-1}\cdot\overline{b} = a$, this implies \[(g')^{-1}g = \phi^{k_{i(a)}^{-1}}_{t(\overline{b})}
\circ \psi_{\overline{b}}(x) = \psi_a \circ \phi^{k_{i(a)}^{-1}}_{\overline{i(b)}}(x) \in
\psi_a(G_{i(a)})\] as required. Thus the map $\Lambda_{\s/b}$ is injective.

To show that $\Lambda_{\s/b}$ is surjective, let $g \in G_{\overline{\tau}}$ and $h \in \Stab_H(\overline\tau)$, so
that $(g,h) \in H_\tau$.  Let $a$ be the unique edge of $Y$ with $t(a) =\s$ and such that $k_\s\cdot a = h h_b
\overline{b}$.  Let $g'$ be the unique element of $G_\s$ such that $\phi^{k_\s}_\s(g') = g \in
G_{\overline\tau}$. We claim that $\lambda_\s(g')\lambda(a)$ lies in the same coset as $(g,h)$.  Now
\[\lambda_\s(g')\lambda(a) = (\phi^{k_\s}_\s(g'), k_\s k_{i(a)}^{-1} h_b^{-1})=(g,k_\s k_{i(a)}^{-1} h_b^{-1})\]
so it suffices to show that $k_\s k_{i(a)}^{-1} h_b^{-1} \in h h_b \Stab_H(\overline{i(b)}) h_b^{-1}$.
Equivalently, we wish to show that $h_b^{-1} h^{-1} k_\s k_{i(a)}^{-1}$ fixes $\overline{i(b)}$.  We have
$k_{i(a)}\cdot i(a) = \overline{i(b)}$ by definition, and the result follows by our choice
of $a$.  Thus $\Lambda_{\s/b}$ is surjective.

Hence $\Lambda$ is a covering of complexes of groups.\end{proof}

\subsection{The fundamental group}\label{ss:fund_group}

Suppose $H$ acts by simple morphisms on a simple complex of groups $G(Y)$, inducing a complex of groups $H(Z)$ as in Section~\ref{ss:induced_cogs} above.  In this section we establish the short exact sequence of  Theorem~\ref{t:group_action} above, and provide sufficient conditions for the fundamental group of $H(Z)$ to be the semidirect product of the fundamental group of $G(Y)$ by $H$.

Fix $\s_0$ a vertex of $Y$ and let $p:Y \to Z$ be the natural projection.  We refer the reader to Section~III.$\mathcal{C}$.3 of~\cite{BH} for the definition of the \emph{fundamental group of G(Y) at $\s_0$}, denoted $\pi_1(G(Y),\s_0)$.  We will use notation and results from that section in the following proof.  Let $\pi_1(H(Z),p(\s_0))$ be the fundamental group of $H(Z)$ at $p(\s_0)$.

\begin{proposition}\label{p:SES}  There is a natural short exact sequence
\[1 \to \pi_1(G(Y),\s_0) \to \pi_1(H(Z),p(\s_0)) \to H \to 1.\]
\end{proposition}

\begin{proof}
To obtain a monomorphism $\pi_1(G(Y),\s_0) \to \pi_1(H(Z),p(\s_0))$, we use the morphism of complexes of groups $\Lambda:G(Y) \to H(Z)$ defined in Section~\ref{ss:induced_covering} above.  By Proposition~III.$\mathcal{C}$.3.6 of~\cite{BH}, $\Lambda$ induces a natural homomorphism
\[\pi_1(\Lambda,\s_0):\pi_1(G(Y),\s_0) \to \pi_1(H(Z),p(\s_0)).\] Since
$\Lambda$ is a covering (Proposition~\ref{p:covering} above), Theorem~\ref{t:coverings} above implies
that this map $\pi_1(\Lambda,\s_0)$ is in fact injective.

We next define a surjection $\pi_1(H(Z),p(\s_0)) \to H$.  The group $H$ may be regarded as a complex of groups over a single vertex.  There is then a canonical morphism of complexes of groups $\Phi:H(Z) \to H$, defined as follows.  Recall that for each $\tau \in V(Z)$, the local group $H_\tau$ is given by $H_\tau = G_{\overline\tau} \rtimes \Stab_H(\overline\tau)$.  The local map $\phi_\tau:H_\tau \to H$ in the morphism $\Phi$ is defined to be projection to the second factor $\Stab_H(\overline{\tau}) \leq H$.  For each edge $b$ of $Z$, we define $\phi(b) = h_b$.  It may then be checked that $\Phi$ is a morphism.

By Proposition~III.$\mathcal{C}$.3.6 of~\cite{BH}, the morphism $\Phi$ induces a homomorphism of fundamental groups
\[\pi_1(\Phi,p(\s_0)): \pi_1(H(Z),p(\s_0)) \to H.\]
By~III.$\mathcal{C}$.3.14 and Corollary~III.$\mathcal{C}$.3.15 of~\cite{BH}, if $G(Y)$ were a complex of trivial groups, this map would be surjective.  Since the image of $\pi_1(\Phi,p(\s_0))$ does not in fact depend on the local groups of $G(Y)$, we have that in all cases, $\pi_1(\Phi,p(\s_0))$ is surjective, as required.

It follows from definitions that the image of the monomorphism $\pi_1(\Lambda,\s_0)$ is the kernel of the surjection $\pi_1(\Phi,p(\s_0))$.  Hence the sequence above is exact.\end{proof}

\begin{corollary}\label{p:fund_gp_splits}
If $H$ fixes a vertex of $Y$,
\[ \pi_1(H(Z),p(\s_0)) \cong \pi_1(G(Y),\s_0) \rtimes H.\]
\end{corollary}

\begin{proof}
Suppose that $H$ fixes the vertex $\sigma$ of $Y$.  We will construct a section $\iota:H \to \pi_1(H(Z),p(\s_0))$ for the surjective homomorphism $\pi_1(\Phi,p(\s_0)): \pi_1(H(Z), p(\s_0)) \to H$ given in the proof of Proposition~\ref{p:SES} above.

The vertex $\sigma$ is the unique lift $\overline\tau$ of a vertex $p(\sigma) = \tau \in Z$.  Hence
\[H_\tau = G_{\overline\tau} \rtimes \Stab_H(\overline\tau) = G_\s \rtimes H.\]
By definition of the surjection $\pi_1(\Phi,p(\s_0)): \pi_1(H(Z), p(\s_0)) \to H$, a section $\iota:H \to \pi_1(H(Z),p(\s_0))$ is then given by the inclusion $H \to H_\tau$.
 \end{proof}

This completes the proof of Theorem~\ref{t:group_action}.

\section{Proof of the Main Theorem}\label{s:proof}

We now prove the Main Theorem and Corollary~\ref{c:infinite_generation}, stated in the introduction.  Throughout this section, we adopt
the notation of the Main Theorem, and assume that the vertices $s_1$ and $s_2$ of the nerve $L$, and the
elements $\alpha_1$ and $\alpha_2$ of the group $A$ of label-preserving automorphisms of $L$, satisfy
Conditions~\eqref{c:fix}--\eqref{c:halvable} of its statement.  In Section~\ref{ss:underlying} we
introduce notation, and construct a family of finite polyhedral complexes $Y_n$, for $n \geq 1$, and an
infinite polyhedral complex $Y_\infty$.  We then in Section~\ref{ss:GYn} construct complexes of groups
$G(Y_n)$ and $G(Y_\infty)$ over these spaces, and show that there are coverings of complexes of groups
$G(Y_n) \to G(Y_1)$ and $G(Y_\infty) \to G(Y_1)$.  In Section~\ref{ss:Hn} we define the action of a
finite group $H_n$ on $Y_n$, and of an infinite group $H_\infty$ on $Y_\infty$, and then in
Section~\ref{ss:action} we show that these actions extend to actions on the complexes of groups
$G(Y_n)$ and $G(Y_\infty)$.  In Section~\ref{ss:conclusion} we combine these results with
Theorem~\ref{t:group_action}  above to complete the proof of the Main Theorem.
Corollary~\ref{c:infinite_generation} is proved in Section~\ref{ss:corollary_proof}.

\subsection{The spaces $Y_n$ and $Y_\infty$}\label{ss:underlying}

In this section we construct a family of finite polyhedral complexes $Y_n$ and an infinite polyhedral
complex $Y_\infty$.

We first set up some notation.
For $i = 1,2$, let $q_i \geq 2$ be the order of $\alpha_i$.  It will be convenient to put, for all $k \geq 0$, $s_{2k+1} = s_1$ and $s_{2k+2} = s_2$, and similarly $\alpha_{2k+1} = \alpha_1$, $\alpha_{2k+2}=\alpha_2$, $q_{2k+1}=q_1$ and $q_{2k+2} = q_2$.  Conditions~\eqref{c:fix}--\eqref{c:halvable} of the Main Theorem then become:
\begin{enumerate}
\item for all $n \geq 1$, $\alpha_n$ fixes the star of $s_{n+1}$ in $L$;
\item for all $n \geq 1$, the subgroup $\langle \alpha_n \rangle$ of $A$ acts freely on the $\langle \alpha_{n} \rangle$--orbit of $s_n$, in particular $\alpha_n(s_{n}) \neq s_{n}$;
\item for all $n \geq 1$, and all $t_n \neq s_n$ such that $t_n$ is in the $\langle \alpha_{n} \rangle$--orbit of $s_n$, $m_{s_{n}t_n} = \infty$; and
\item for all $n \geq 1$, all spherical special subgroups of $W$ which contain $s_n$ are halvable along $s_n$.
\end{enumerate}

We now use the sequences $\{s_n\}$ and $\{ \alpha_n\}$ to define certain elements and subsets of $W$.
Let $w_1$ be the trivial element of $W$ and for $n \geq 2$ let $w_n$ be the product
\[ w_{n} = s_1 s_2 \cdots s_{n-1} \in W.\]
Denote by $W_{n,n}$ the one-element set $\{ w_{n} \}$.  For $n \geq 2$, and $1 \leq k < n$, in order to simplify notation, write $\alpha^{j_{n-1},\ldots,j_k}$ for the composition of automorphisms
\[ \alpha^{j_{n-1},\ldots,j_k}= \alpha_{n-1}^{j_{n-1}}\cdots \alpha_k^{j_k} \]
where $0 \leq j_i < q_i$ for $k \leq i < n$.   Let $w_{j_{n-1},\ldots,j_k}$ be the element of $W$:
\begin{equation}\label{e:wk} w_{j_{n-1},\ldots,j_k} = w_n \alpha^{j_{n-1}}(s_{n-1})\alpha^{j_{n-1}, j_{n-2}}(s_{n-2})\cdots\alpha^{j_{n-1}, \ldots ,j_{k+1}}(s_{k+1})\alpha^{j_{n-1},\ldots, j_k}(s_k). \end{equation}
Now for $n \geq 2$ and $1 \leq k < n$, define
\[
W_{k,n}  =   \{  w_{j_{n-1},\ldots,j_k} \in W \mid \mbox{$0 \leq j_i < q_i$ for $k \leq i < n$} \}.
\]
Note that if $j_{n-1} =0$ then $w_{j_{n-1},\ldots,j_k} \in W_{k,n-1}$.

\example Let $(W,S)$ be the Coxeter system in Example 1 of Section~\ref{ss:Davis_complexes} above, with
nerve $L$ shown in Figure~\ref{f:nerve} above.  For $i = 1,2$, let $\alpha_i \in A$ be the automorphism
of $L$ which fixes the star of $s_{3-i}$ in $L$ and interchanges $s_i$ and $s_3$.  Then if $m$ and $m'$
are both even, the Main Theorem applies to this example. (If $T = \{ s\}$ then $W_T$ is halvable along
$s$ with $\half_s(W_T)$ the trivial group.  If $T=\{ s,t \}$ then $W_T$ is the dihedral group of order
$2m_{st}$, and $W_T$ is halvable along $s$ if and only if $m_{st}$ is even, in which case $\half_s(W_T)$
is the dihedral group of order $m_{st}$.)  Note that $q_1 = q_2 = 2$, and so, for instance,
\begin{eqnarray*} W_{1,3} & = & \{ 1, s_1\alpha_1(s_1), s_1s_2\alpha_2(s_2)\alpha_2(s_1),
s_1s_2\alpha_2(s_2)\alpha_2\alpha_1(s_1)\} \\ W_{2,3} & = & \{ s_1, s_1s_2\alpha_2(s_2) \} \\ W_{3,3} &
= & \{ s_1 s_2 \}.\end{eqnarray*}

The following lemma establishes key properties of the sets $W_{k,n}$.

\begin{lemma}\label{l:disjoint} For all $n \geq 1$: \begin{enumerate}\item the sets $W_{1,n}$,
$W_{2,n}$, \ldots, $W_{n,n}$ are pairwise disjoint; and \item for all $1 \leq k < n$, if
\[w_{j_{n-1},\ldots,j_k}=w_{j'_{n-1},\ldots,j'_k}\] (where $0 \leq j_i < q_i$ for $k \leq i < n$) then
$j_k = j'_k$, $j_{k+1}=j'_{k+1}$, \ldots, and $j_{n-1}=j'_{n-1}$.\end{enumerate} \end{lemma}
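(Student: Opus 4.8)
The plan is to study, for each element $w_{j_{n-1},\ldots,j_k}$, the explicit word
$s_1 s_2 \cdots s_{n-1}\, t_{n-1} t_{n-2}\cdots t_k$ representing it, where $t_i := \alpha^{j_{n-1},\ldots,j_i}(s_i)$, and to show that (when its leading indices do not vanish) this word is reduced and is moreover the \emph{unique} reduced expression for the element it represents. Everything else is then bookkeeping.

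First I would record the consequences of the hypotheses that make the letters behave well. Since $q_i$ is prime and $\alpha_i(s_i)\neq s_i$, the $\langle\alpha_i\rangle$-orbit of $s_i$ has exactly $q_i$ elements, so $\alpha_i^{j}(s_i)=\alpha_i^{j'}(s_i)$ iff $j\equiv j'\pmod{q_i}$. Next, condition~(1) forces $m_{s_i s_{i+1}}=\infty$: were $s_i$ a neighbour of $s_{i+1}$ it would lie in the star of $s_{i+1}$, which $\alpha_i$ fixes, contradicting $\alpha_i(s_i)\neq s_i$; in particular the alternating word $s_1 s_2 s_1\cdots$ is reduced and $\langle s_1,s_2\rangle$ is infinite dihedral. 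Using that each $\alpha_i$ is label-preserving and fixes the star of $s_{i+1}$, one computes $m_{t_{i+1},t_i}=m_{s_{i+1},\alpha_i^{j_i}(s_i)}=m_{s_{i+1},s_i}=\infty$ (strip the common automorphism prefix, then strip $\alpha_i^{j_i}$, which fixes $s_{i+1}$); and when $j_{n-1}\neq 0$, condition~(2) gives $m_{s_{n-1},t_{n-1}}=\infty$. Thus, \emph{provided the top index $j_{n-1}$ is nonzero, every pair of adjacent letters of the word has bond $\infty$}.

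The core step, which I regard as the crux, is then: if $j_{n-1}\neq0$ the word above is reduced and is the unique reduced expression for $w_{j_{n-1},\ldots,j_k}$, of length $2n-k-1$. I would prove this by building the word up one letter at a time from the left. Each partial word again has all adjacent bonds $\infty$, so no braid relation $\underbrace{aba\cdots}_{m_{ab}}$ (which requires an adjacent pair with $m_{ab}<\infty$) can be applied to it; by Tits' theorem a reduced word admitting no braid move is the only reduced expression of its element, whence its right-descent set is the singleton consisting of its last letter. Since the next letter to be appended differs from the current last letter (the two have bond $\infty$), it is not a right descent, so appending it increases the length by one and keeps the expression reduced. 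Starting from the reduced alternating prefix $s_1\cdots s_{n-1}$, whose unique descent is $s_{n-1}\neq t_{n-1}$, the induction runs to the end. When instead $j_{n-1}=0$ one has $t_{n-1}=s_{n-1}$, the final prefix letter cancels, and $w_{0,j_{n-2},\ldots,j_k}=w_{j_{n-2},\ldots,j_k}\in W_{k,n-1}$, as already noted; this cancellation is exactly why the naive length count fails, and is what the induction on $n$ must track.

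Part~(2) then follows by induction on $n$. If $w_{j_{n-1},\ldots,j_k}=w_{j'_{n-1},\ldots,j'_k}$, comparing lengths shows $j_{n-1}$ and $j'_{n-1}$ are either both zero, whence the remark reduces the equality to one in $W_{k,n-1}$ and the inductive hypothesis applies, or both nonzero, in which case uniqueness of the reduced expression forces $t_i=t'_i$ for every $i$; reading these equalities from the outside in and invoking the orbit count gives $j_i=j'_i$ for all $i$. For part~(1), I would observe that the unique reduced expression of any element of $W_{k,n}$, put into canonical form $s_1\cdots s_m\, t'_m\cdots t'_k$ with $j_m\neq0$ by stripping leading zero indices (or $s_1\cdots s_{k-1}$ in the all-zero case), begins with a maximal initial block of letters from $\{s_1,s_2\}$ of length exactly $m$, because $t'_m=\alpha_m^{j_m}(s_m)\notin\{s_1,s_2\}$. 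As both $m$ and the total length $\ell=2m+1-k$ are invariants of the element, the value $k=2m+1-\ell$ is recovered from the element alone; hence $W_{1,n},\ldots,W_{n,n}$ are the distinct fibres of a well-defined function and are pairwise disjoint.
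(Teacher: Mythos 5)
Your proof is correct, and its engine is the same one driving the paper's argument: every pair of adjacent letters in the relevant words has infinite bond, so no braid move applies, and (by Tits' solution to the word problem) such words are reduced and are the \emph{unique} reduced expressions of the elements they represent; equality of elements therefore forces letterwise equality of words, and primality of $q_i$ converts letterwise equality into equality of exponents. The difference is one of packaging and economy. The paper first cancels the common prefix $w_n$ by left multiplication and compares only the suffixes $\alpha^{j_{n-1}}(s_{n-1})\cdots\alpha^{j_{n-1},\ldots,j_k}(s_k)$; in the suffix every adjacent bond is already infinite by Condition~(1) alone (via $m_{t_{i+1}t_i}=m_{s_{i+1}s_i}=\infty$, exactly your computation), so the degenerate case $j_{n-1}=0$ never arises, Condition~(2) is never needed, and both parts of the lemma fall out of a single comparison run with $k\leq k'$. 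You instead analyze the full word $s_1\cdots s_{n-1}t_{n-1}\cdots t_k$, which forces you to invoke Condition~(2) at the prefix--suffix junction, to split off the non-reduced case $j_{n-1}=0$ and induct on $n$, and to recover $k$ in part~(1) from two invariants of the element (total length $\ell$ and the length $m$ of the maximal initial $\{s_1,s_2\}$--block, via $k=2m+1-\ell$). What your route buys: a stronger intermediate statement (explicit normal forms and the length formula), and an explicit justification, via Tits' theorem and the descent-set induction, of the step the paper compresses into ``the only way for Equation to hold is if $k=k'$ and $\alpha_i^{j_i}(s_i)=\alpha_i^{j'_i}(s_i)$.'' What the paper's route buys: a shorter argument from fewer hypotheses.
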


\begin{proof} Given $1 \leq k \leq k' <n$, with $0 \leq j_i < q_i$ for $k \leq i < n$ and $0 \leq j'_i <
q_i$ for $k' \leq i < n$, suppose \begin{equation}\label{e:equal_ws} w_{j_{n-1},\ldots,j_k} =
w_{j'_{n-1},\ldots,j'_{k'}}.\end{equation} Then \[\alpha^{j_{n-1}}(s_{n-1})\alpha^{j_{n-1},
j_{n-2}}(s_{n-2})\cdots\alpha^{j_{n-1}, \ldots, j_{k'},\ldots,
,j_{k+1}}(s_{k+1})\alpha^{j_{n-1},\ldots,j_{k'},\ldots, j_k}(s_k)
\]\[=\alpha^{j'_{n-1}}(s_{n-1})\alpha^{j'_{n-1}, j'_{n-2}}(s_{n-2})\cdots\alpha^{j'_{n-1}, \ldots
,j'_{k'+1}}(s_{k'+1})\alpha^{j'_{n-1},\ldots, j'_{k'}}(s_{k'}).\] By Condition~\eqref{c:fix} above, for each
$k \leq i < n$, the automorphism $\alpha_i$ fixes $s_{i+1}$, thus \begin{eqnarray*} \alpha^{j_{n-1}, \ldots
,j_{i+1}}(s_{i+1})\alpha^{j_{n-1},\ldots, j_i}(s_i) & = & \alpha^{j_{n-1}, \ldots
,j_{i+1},j_i}(s_{i+1})\alpha^{j_{n-1},\ldots, j_i}(s_i) \\ & = & \alpha^{j_{n-1},\ldots, j_i}(s_{i+1}s_i).
\end{eqnarray*} Also since $\alpha_i$ fixes the star of $s_{i+1}$ but $\alpha_i(s_i) \neq s_i$, we
have $m_{s_{i+1}s_i} = \infty$.  Since $\alpha^{j_{n-1},\ldots, j_i}$ is a label-preserving automorphism, it
follows that the product of the two generators \[ \alpha^{j_{n-1}, \ldots
,j_{i+1}}(s_{i+1})\alpha^{j_{n-1},\ldots, j_i}(s_i) \] has infinite order, for each $k \leq i < n$.
Similarly for each $k'\leq i < n$.  Thus the only way for Equation~\eqref{e:equal_ws} to hold is if $k =
k'$, and for each $k \leq i < n$, $\alpha_i^{j_i}(s_i) = \alpha_i^{j'_i}(s_i)$.  Since $\langle \alpha_i \rangle$ acts freely on the $\langle \alpha_i \rangle$--orbit of $s_i$ and we specified $0 \leq j_i < q_i$, the result follows. \end{proof}

 For $n \geq 1$, and $1 \leq k \leq n$, define $Y_{k,n}$ to be the set of chambers
\[ Y_{k,n} := \{ w K \mid w \in W_{k,n} \}. \]
Recall that we are writing $wK$ for the pair $(w,K)$.  By Lemma~\ref{l:disjoint} above, for fixed $n$, the sets $Y_{1,n},\ldots,Y_{n,n}$ are pairwise disjoint.  We now define $Y_n$ to be the polyhedral complex obtained by ``gluing together" the chambers in $Y_{1,n},\ldots,Y_{n,n}$, using the same relation $\sim$ as in the Davis complex $\Sigma$ for $(W,S)$.  More precisely,
\[Y_n := \left(\coprod_{k=1}^n Y_{k,n}\right) / \sim\]
where, for $x,x' \in K$, we have $(w,x) \sim (w',x')$ if and only if $x = x'$ and $w^{-1}w' \in W_{S(x)}$.  Note that $Y_1 = Y_{1,1} =  K$.  To define $Y_\infty$, for each $k \geq 1$, noting that $W_{k,n}$ is only defined for $1 \leq k \leq n$, put
\[ W_{k,\infty} := \bigcup_{n=k}^\infty W_{k,n}.\]
Then $Y_{k,\infty}$ is the set of chambers
\[Y_{k,\infty} := \{ wK \mid w \in W_{k,\infty}\}.\] Similarly to the finite case, the sets $Y_{1,\infty},Y_{2,\infty},\ldots$ are pairwise disjoint, and we define
\[Y_\infty = \left(\coprod_{k=1}^\infty Y_{k,\infty}\right) / \sim\]
for the same relation $\sim$.  Note that there are natural strict inclusions as subcomplexes
\[Y_1 \subset Y_2 \subset \cdots \subset Y_n \subset \cdots Y_\infty.\]
(In fact, $Y_n$ and $Y_\infty$ are subcomplexes of the Davis complex $\Sigma$, but we will not adopt this point of view.)
We define a mirror of $Y_n$ or $Y_\infty$ to be an \emph{interior mirror} if it is contained in more than one chamber.

\example Let $(W,S)$, $\alpha_1$ and $\alpha_2$ be as in the previous example of this section.  To indicate the construction of $Y_n$ and $Y_\infty$ in this case, Figure~\ref{f:Y4} below
depicts the dual graph for $Y_4$, that is, the graph with vertices the chambers of $Y_4$, and edges joining adjacent chambers.  The edges are labelled with the type of the corresponding interior mirror.  Figure~\ref{f:Yinfty} sketches the dual graph for $Y_\infty$.

\begin{figure}[ht]
\begin{center}
\includegraphics[scale=1, viewport=150 450 450 750, clip]{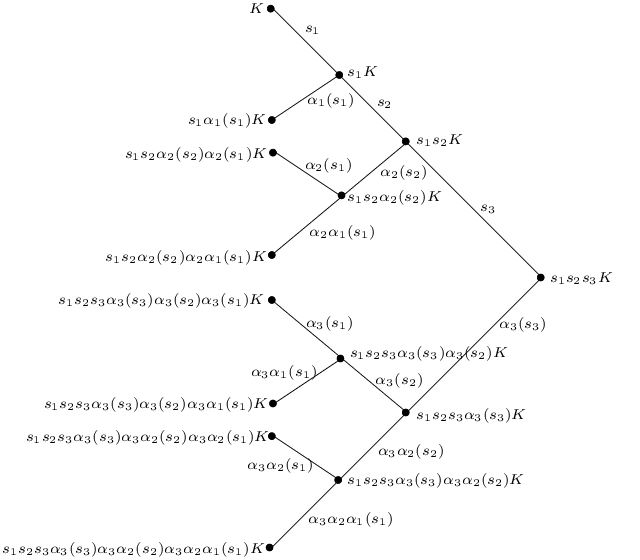}
\end{center}
\caption{Dual graph for $Y_4$, with vertices and edges labelled}
\label{f:Y4}
\end{figure}

\begin{figure}[ht]
\begin{center}
\includegraphics{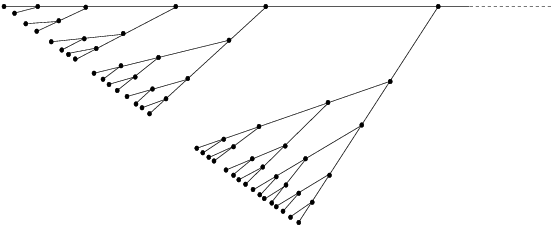}
\end{center}
\caption{Dual graph for $Y_\infty$}
\label{f:Yinfty}
\end{figure}

We now describe features of $Y_n$ and $Y_\infty$ which will be needed below.  The first lemma follows from the construction of $Y_n$ and $Y_\infty$ and Lemma~\ref{l:disjoint} above.

\begin{lemma}\label{l:adjacency} Let $w=w_{j_{n-1},\ldots,j_k} \in W_{k,n}$.  All of the chambers of $Y_n$ to which $wK \in Y_{k,n}$ is adjacent are described by the following.
\begin{enumerate}
\item\label{i:up} For $n \geq 1$ and $1 \leq k < n$, the chamber $wK$ is adjacent to exactly one chamber of $Y_{k+1,n}$, namely it is $\alpha^{j_{n-1},\ldots,j_k}(s_k)$--adjacent to the chamber $w_{j_{n-1},\ldots,j_{k+1}}K$ of $Y_{k+1,n}$.
\item\label{i:down} For $n \geq 2$ and $1 \leq k \leq n$, the chamber $wK$ is adjacent to exactly $q_{k-1}$ distinct chambers of $Y_{k-1,n}$, namely for each $0 \leq j_{k-1} < q_{k-1}$, the chamber $wK$ is $\alpha^{j_{n-1},\ldots,j_k,j_{k-1}}(s_{k-1})$--adjacent to the chamber $w_{j_{n-1},\ldots,j_k,j_{k-1}}K$ of $Y_{k-1,n}$.
\end{enumerate}
Similarly for $Y_\infty$.
\end{lemma}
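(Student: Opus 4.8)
The plan is to use the adjacency criterion recorded in Section~\ref{ss:Davis_complexes}: two chambers $wK$ and $w'K$ are $s$--adjacent precisely when $w^{-1}w' = s \in S$, equivalently $w' = ws$. Thus the neighbours of $wK$ inside $Y_n$ are exactly the chambers $w'K$ with $w' \in \bigcup_{k'} W_{k',n}$ and $w^{-1}w' \in S$. Writing $\beta_i := \alpha^{j_{n-1},\ldots,j_i}(s_i) \in S$ for the individual factors in~\eqref{e:wk}, so that $w = w_n\,\beta_{n-1}\cdots\beta_k$, the two listed families of neighbours are verified by direct computation. Since each $\beta_i$ is an involution, peeling the last factor gives $w\,\beta_k = w_{j_{n-1},\ldots,j_{k+1}} \in W_{k+1,n}$, the up--neighbour (for $k<n$); appending a factor gives $w\,\beta_{k-1} = w_{j_{n-1},\ldots,j_{k-1}} \in W_{k-1,n}$, one element for each $0 \le j_{k-1} < q_{k-1}$ (for $k \ge 2$). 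That these $q_{k-1}$ chambers are distinct is immediate from Lemma~\ref{l:disjoint}(2); distinctness of the mirror types $\beta_{k-1}$ follows because $\alpha_{k-1}$ has prime order $q_{k-1}$ with $\alpha_{k-1}(s_{k-1}) \neq s_{k-1}$, so the orbit of $s_{k-1}$ has exactly $q_{k-1}$ elements.

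It remains to show there are no further neighbours; this is the crux. Suppose $w' = w_{j'_{n-1},\ldots,j'_{k'}} \in W_{k',n}$ satisfies $w^{-1}w' = s \in S$. The shared prefix $w_n$ cancels, leaving
\[ w^{-1}w' = \beta_k\cdots\beta_{n-1}\,\beta'_{n-1}\cdots\beta'_{k'}, \]
where $\beta'_i := \alpha^{j'_{n-1},\ldots,j'_i}(s_i)$. Let $t$ be the largest index at which the two index sequences disagree, so that $\beta_i = \beta'_i$ for all $i>t$ (necessarily $t \ge \max(k,k')$); if instead one sequence is an initial segment of the other, treat this as the \emph{prefix case}. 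In the prefix case the middle telescopes completely and $w^{-1}w'$ is a single staircase tail $\beta_k\cdots\beta_{k'-1}$ or $\beta'_{k-1}\cdots\beta'_{k'}$, which equals a generator exactly when the tail has length one; these recover precisely the up-- and down--neighbours already found, while a tail of length $\ge 2$ cannot be a single generator by the argument below. In the \emph{differ case} the factors above level $t$ telescope, giving
\[ w^{-1}w' = \beta_k\cdots\beta_t\,\beta'_t\cdots\beta'_{k'}, \qquad t \ge \max(k,k'). \]

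To finish I would show that this word (resp. the length--$\ge 2$ tail) is \emph{reduced}, so that it has length $\ge 2$ and hence cannot be a single generator. Every pair of adjacent factors has infinite Coxeter order: the ascending and descending consecutive pairs $\beta_i\beta_{i+1}$ and $\beta'_i\beta'_{i-1}$ are handled exactly as in the proof of Lemma~\ref{l:disjoint}, using Condition~\eqref{c:fix} and label--preservation of the $\alpha^{\ldots}$; and for the turning pair, writing $\gamma$ for the common automorphism $\alpha^{j_{n-1},\ldots,j_{t+1}} = \alpha^{j'_{n-1},\ldots,j'_{t+1}}$ and using that $\gamma$ and $\alpha_t$ are label--preserving,
\[ m_{\beta_t,\,\beta'_t} = m_{\alpha_t^{j_t}(s_t),\,\alpha_t^{j'_t}(s_t)} = m_{s_t,\,\alpha_t^{j'_t - j_t}(s_t)} = \infty, \]
the last equality by Condition~\eqref{c:orbit}, since $j_t \neq j'_t$ and $\alpha_t$ has prime order force $\alpha_t^{j'_t-j_t}(s_t) \neq s_t$. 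Since no two adjacent factors coincide and no braid move can be applied (every adjacent pair having infinite order), Tits' solution of the word problem shows the word admits no length--reducing transformation, hence is reduced; its length is its number of letters, which is at least $2$. Thus $w^{-1}w'$ is not a single generator, so no extra adjacency exists. The argument for $Y_\infty$ is identical, as any adjacency already occurs within some finite $Y_n$ through the inclusions $Y_1\subset Y_2\subset\cdots\subset Y_\infty$, and the boundary cases $k=n$ (no up--neighbour) and $k=1$ (no down--neighbour, level $0$ being absent) are read off directly.

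I expect the reducedness step to be the main obstacle: ruling out any collapse of the telescoped staircase product to a single generator. Its essential input is that all adjacent factors generate infinite dihedral subgroups --- the same mechanism driving Lemma~\ref{l:disjoint} --- which, via Tits' theorem, upgrades to full reducedness and hence the required length bound.
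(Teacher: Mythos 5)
Your proof is correct. For comparison: the paper gives essentially no proof of this lemma --- it states only that it ``follows from the construction of $Y_n$ and $Y_\infty$ and Lemma~\ref{l:disjoint}'' --- so your write-up supplies the details that the paper leaves implicit, and it does so along exactly the lines the paper gestures at. Your exhibited neighbours come from peeling/appending the last factor of the staircase word~\eqref{e:wk}, and your exclusion of all other neighbours runs on the same mechanism as the paper's actual proof of Lemma~\ref{l:disjoint}: after cancellation of the common prefix and telescoping of the agreeing top factors, every pair of consecutive letters in the resulting word generates an infinite dihedral group, by Condition~\eqref{c:fix}, Condition~\eqref{c:orbit}, label-preservation, and the prime-order observation that $\alpha_t^{j}(s_t)\neq s_t$ for $0<j<q_t$. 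The one ingredient you add beyond what the paper makes explicit is the appeal to Tits' solution of the word problem to upgrade ``all adjacent pairs have infinite order'' to ``the word is reduced, hence of length at least $2$, hence not a generator''; this is a legitimate and clean way to close the gap, and in fact the paper's own proof of Lemma~\ref{l:disjoint} tacitly relies on a fact of the same kind when it concludes that equality of two staircase words forces the index sequences to agree. Your reduction of the $Y_\infty$ case to the finite case is also justified, since $W_{k,n-1}\subset W_{k,n}$ (take $j_{n-1}=0$), so any two chambers of $Y_\infty$ lie in a common finite $Y_N$, and adjacency there agrees with adjacency in $Y_\infty$.
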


\begin{corollary}\label{c:mirrors}
\begin{enumerate}
\item\label{i:vertex} Any vertex of $Y_n$ is contained in at most two distinct chambers of $Y_n$, and similarly for $Y_\infty$.
\item\label{i:mirrors} Any two interior mirrors of $Y_n$ or $Y_\infty$ are disjoint.
\end{enumerate}
\end{corollary}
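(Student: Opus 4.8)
The plan is to deduce both statements from a single local computation carried out by means of Lemma~\ref{l:adjacency}: that distinct interior mirrors of one and the same chamber of $Y_n$ (or $Y_\infty$) are disjoint. First I would fix a chamber $wK \in Y_{k,n}$ with $w = w_{j_{n-1},\ldots,j_k}$ and set $\beta = \alpha^{j_{n-1},\ldots,j_k}$, a label--preserving automorphism. By Lemma~\ref{l:adjacency} the interior mirrors of $wK$ are exactly the up--mirror, of type $\beta(s_k)$, and the down--mirrors, of types $\beta\alpha_{k-1}^{j}(s_{k-1})$ for $0 \le j < q_{k-1}$. Two mirrors $wK_\rho$, $wK_{\rho'}$ meet only if $\{\rho,\rho'\}$ is spherical, so it suffices to check $m_{\rho\rho'} = \infty$ for distinct such types. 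Since $\beta$ is label--preserving I would strip it off. An up--type against a down--type leaves $m_{s_k,\,\alpha_{k-1}^{j}(s_{k-1})}$; as $\alpha_{k-1}$ fixes the star of $s_k$ it fixes $s_k$, so applying $\alpha_{k-1}^{-j}$ reduces this to $m_{s_k,s_{k-1}}$, which is $\infty$ exactly as in the proof of Lemma~\ref{l:disjoint} (the star of $s_k$ is fixed by $\alpha_{k-1}$ but $s_{k-1}$ is not). Two down--types leave $m_{s_{k-1},\,\alpha_{k-1}^{j'-j}(s_{k-1})}$ with $j \ne j'$; since $q_{k-1}$ is prime and $\alpha_{k-1}(s_{k-1}) \ne s_{k-1}$, the element $\alpha_{k-1}^{j'-j}(s_{k-1})$ is a nontrivial image of $s_{k-1}$ under $\alpha_{k-1}$ and is $\ne s_{k-1}$, so this is $\infty$ by the orbit hypothesis of the Main Theorem. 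Thus any two distinct interior mirrors of $wK$ are disjoint; this is the main content of the proof.

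Next I would record two consequences of Lemma~\ref{l:adjacency}. First, the dual graph of $Y_n$ is a tree: each chamber other than $w_nK$ has a unique up--neighbour, there are no same--level adjacencies, so following up--edges contracts everything to the root $w_nK$, giving a connected graph with one fewer edge than vertices. For $Y_\infty$ the same unique--parent structure holds, and since $Y_1 \subset Y_2 \subset \cdots$ are nested connected subcomplexes while any cycle is finite, hence contained in some $Y_N$, the graph $Y_\infty$ is again a tree. Second, along the tree geodesic $D_0,\ldots,D_r$ between two chambers, with edge types $\rho_1,\ldots,\rho_r$, consecutive types $\rho_i,\rho_{i+1}$ are distinct interior mirrors of $D_i$, so by the computation above $m_{\rho_i\rho_{i+1}} = \infty$ for every $i$. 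A word whose every pair of consecutive letters has $m = \infty$ admits neither a braid move nor a cancellation of adjacent equal letters, so by Tits' solution of the word problem it is reduced.

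I would then prove (1). Let $v$ be a vertex of $Y_n$, of spherical type $T$, and let $wK, w'K$ be chambers of $Y_n$ containing $v$, so that $w^{-1}w' \in W_T$. Writing $w^{-1}w' = \rho_1\cdots\rho_r$ for the tree--geodesic word between them, this word is reduced by the previous paragraph, so its support is $\{\rho_1,\ldots,\rho_r\} \subseteq T$. If $r \ge 2$ this support contains $\rho_1,\rho_2$ with $m_{\rho_1\rho_2} = \infty$, contradicting the sphericity of $T$. Hence $r \le 1$, i.e. any two distinct chambers through $v$ are adjacent; since a tree contains no triangle, at most two chambers are pairwise adjacent, so $v$ lies in at most two chambers. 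The same argument applies to $Y_\infty$.

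Finally, (2) follows from (1). An interior mirror is the intersection of the exactly two chambers sharing it, and this pair of chambers determines the mirror. If two distinct interior mirrors $M,M'$ met, their intersection, being a nonempty subcomplex, would contain a vertex $p$ lying in all of the two--plus--two chambers sharing $M$ and $M'$; by (1) there are at most two chambers through $p$, forcing the two sharing pairs to coincide and hence $M = M'$, a contradiction. The only genuinely delicate point in the whole argument is the passage from the single--chamber computation to vertices shared by non--adjacent chambers, and this is precisely what the reducedness of the tree--geodesic word in the second paragraph resolves.
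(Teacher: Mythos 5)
Your proof is correct, and its core computation coincides with the paper's: the interior mirrors of a single chamber $wK$ have pairwise infinite $m$--labels, with Condition~\eqref{c:fix} handling the up--type against a down--type (the paper deduces $m_{s_ks_{k-1}}=\infty$ exactly as you do) and Condition~\eqref{c:orbit} handling two down--types. Where you genuinely diverge is in how this local fact is globalized. The paper's proof is very short: it asserts that a vertex lying in more than one chamber lies in an interior mirror of $wK$, notes that these mirrors are pairwise disjoint, and immediately concludes that the only chambers through the vertex are $wK$ and $wsK$. This tacitly excludes the possibility that two \emph{non-adjacent} chambers of $Y_n$ share a vertex --- something that does happen in the ambient Davis complex $\Sigma$ (the chambers $K$ and $stK$ with $m_{st}<\infty$ share the vertex of type $\{s,t\}$), so it is not automatic from the definition of $\sim$ but depends on which chambers are actually present in $Y_n$. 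Your second and third paragraphs supply exactly the missing justification: the dual graph is a tree, the gallery word along a tree geodesic has all consecutive $m$'s infinite, hence is reduced by Tits' solution to the word problem, hence has its support inside the spherical type $T$ of any shared vertex, forcing the word to have length at most one. This is in the same spirit as (and effectively extends) the paper's proof of Lemma~\ref{l:disjoint}, which also exploits infinite-order products of consecutive generators. Your formal deduction of part~\eqref{i:mirrors} from part~\eqref{i:vertex} is likewise cleaner than the paper's closing ``similarly''. In short: the paper's route is more economical but leaves the non-adjacent-chambers issue implicit; your route costs the tree structure plus word-problem machinery but makes the delicate step fully rigorous.
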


\begin{proof} Suppose $\s$ is a vertex of $Y_n$, contained in the chamber $wK$, where $w$ is as in Lemma~\ref{l:adjacency} above.  If $\s$ is contained in more than one chamber of $Y_n$ or $Y_\infty$, then $\s$ is contained in an interior mirror $K_s$, for some $s \in S$.  By the construction of $Y_n$ and Lemma~\ref{l:adjacency} above, $s$ is either an image of $s_k$, or one of $q_{k-1}$ distinct images of $s_{k-1}$, under some element of $A$.  Suppose $s$ is in the image of $s_k$.  Condition~\eqref{c:fix} of the Main Theorem implies that $m_{s_ks_{k-1}} = \infty$.  Hence the mirror $K_s$ is disjoint from each of the $q_{k-1}$ mirrors of types the $q_{k-1}$ images of $s_{k-1}$.  Therefore the only chambers of $Y_n$ which contain $\s$ are the two chambers $wK$ and $wsK$.  Now suppose $s$ is one of the $q_{k-1}$ images of $s_{k-1}$ under some element of $A$.  Condition~\eqref{c:orbit} of the Main Theorem implies that the mirrors of types each of these images are pairwise disjoint, and so again $\s$ is contained in only two distinct chambers of $Y_n$.  Similarly, any two interior mirrors of $Y_n$ or $Y_\infty$ are disjoint.
\end{proof}

\begin{corollary}\label{l:subYn}  For all $n \geq 2$, there are $q_{n-1}$ disjoint subcomplexes of $Y_n$,
denoted $Y_{n-1}^{j_{n-1}}$ for $0 \leq j_{n-1} < q_{n-1}$, each isomorphic to $Y_{n-1}$, and with
$Y_{n-1}^0 = Y_{n-1} \subset Y_n$.   For each $0 \leq j_{n-1} < q_{n-1}$, the subcomplex $Y_{n-1}^{j_{n-1}}$
is attached to the chamber $w_{n}K=s_1 s_2 \cdots s_{n-1}K$ of $Y_n$ along its mirror of type
$\alpha^{j_{n-1}}(s_{n-1})$.  An isomorphism \[F^{j_{n-1}}: Y_{n-1} \to Y_{n-1}^{j_{n-1}}\] is given by
sending the chamber \[w_{j_{n-2},\ldots,j_k}K \in Y_{k,n-1}\] to the chamber \[w_{j_{n-1},j_{n-2},\ldots,j_k}K
\in Y_{k,n},\] and the vertex of $w_{j_{n-2},\ldots,j_k}K$ of type $T$ to the vertex of
$w_{j_{n-1},j_{n-2},\ldots,j_k}K$ of type $\alpha^{j_{n-1}}(T)$, for each spherical subset $T$ of $S$.
\end{corollary}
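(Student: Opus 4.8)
The plan is to build $F^{j_{n-1}}$ first on chambers, via the index map that prepends $j_{n-1}$, and then to promote it to an isomorphism of polyhedral complexes by checking that it respects the gluing relation $\sim$; the three inputs are Lemmas~\ref{l:disjoint} and~\ref{l:adjacency} and Corollary~\ref{c:mirrors}. First I would record the chamber bijection and disjointness. For $1 \le k \le n-1$ write $W_{k,n}^{(j_{n-1})}$ for the set of $w_{j_{n-1},j_{n-2},\ldots,j_k}\in W_{k,n}$ whose leading index is $j_{n-1}$. By part~(2) of Lemma~\ref{l:disjoint} each element of $W_{k,n}$ has a unique index tuple, so prepending $j_{n-1}$ gives a bijection $W_{k,n-1}\to W_{k,n}^{(j_{n-1})}$; together with part~(1) these assemble, as $k$ varies, into a bijection from the chambers of $Y_{n-1}$ onto the chambers lying in $\bigcup_k W_{k,n}^{(j_{n-1})}$, and I let $Y_{n-1}^{j_{n-1}}$ be the subcomplex of $Y_n$ they span. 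Since distinct leading indices give distinct elements of each $W_{k,n}$, and the $W_{k,n}$ are pairwise disjoint, the chamber sets of the $Y_{n-1}^{j_{n-1}}$ are pairwise disjoint and, with the single chamber $w_nK\in W_{n,n}$, exhaust the chambers of $Y_n$. For $j_{n-1}=0$ one has $\alpha^0=\id$ and $w_n\alpha^0(s_{n-1})=w_{n-1}$, so $w_{0,j_{n-2},\ldots,j_k}=w_{j_{n-2},\ldots,j_k}$; hence $F^0=\id$ and $Y_{n-1}^0=Y_{n-1}$.

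Next I would track adjacencies and mirror types using the two cases of Lemma~\ref{l:adjacency}. Comparing the formulas for $Y_{n-1}$ and for $Y_n$ and using the identity $\alpha^{j_{n-1},j_{n-2},\ldots,j_i}=\alpha^{j_{n-1}}\circ\alpha^{j_{n-2},\ldots,j_i}$, one sees that two chambers $cK,c'K$ of $Y_{n-1}$ are $s$--adjacent if and only if $F^{j_{n-1}}(c)K$ is $\alpha^{j_{n-1}}(s)$--adjacent to $F^{j_{n-1}}(c')K$ in $Y_n$: every mirror type occurring in the adjacencies of $c$ is simply transformed by $\alpha^{j_{n-1}}$ in the adjacencies of its image, and every such adjacency preserves the leading index. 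The sole exception is the upward adjacency (case~\eqref{i:up}) of the image $w_{j_{n-1}}K\in W_{n-1,n}^{(j_{n-1})}$ of the top chamber $w_{n-1}K$ to $w_nK\in W_{n,n}$, which is $\alpha^{j_{n-1}}(s_{n-1})$--adjacency; this yields the asserted attachment of $Y_{n-1}^{j_{n-1}}$ to $w_nK$ along its mirror of type $\alpha^{j_{n-1}}(s_{n-1})$. In particular no chamber of $Y_{n-1}^{j_{n-1}}$ is adjacent to a chamber of $Y_{n-1}^{j'_{n-1}}$ for $j_{n-1}\ne j'_{n-1}$, so by part~\eqref{i:vertex} of Corollary~\ref{c:mirrors} (a vertex lies in at most two chambers, which must then be adjacent) these subcomplexes share no vertices and are genuinely disjoint.

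Finally I would assemble the isomorphism, which is the main point. Each $\alpha^{j_{n-1}}\in A$ is label-preserving, so it induces a cellular automorphism of $K$ fixing the cone point, sending the vertex of type $T$ to that of type $\alpha^{j_{n-1}}(T)$ and the mirror $K_s$ to $K_{\alpha^{j_{n-1}}(s)}$; on the chamber $(c,K)$ I let $F^{j_{n-1}}$ act by $(c,x)\mapsto(F^{j_{n-1}}(c),\alpha^{j_{n-1}}\cdot x)$, which realizes the stated rule on types. By Corollary~\ref{c:mirrors}, both $Y_{n-1}$ and $Y_n$ are obtained from their chambers by gluing, along each interior mirror, the two adjacent chambers that contain it via the identity on that mirror, exactly as in the Davis complex where $(c,x)\sim(cs,x)$ for $x\in K_s$. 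The hard part is checking that $F^{j_{n-1}}$ carries these gluings to gluings: if $cK$ and $c'K$ are $s$--adjacent then $(c,x)\sim(c',x)$ for $x\in K_s$, and by the previous paragraph the images are $\alpha^{j_{n-1}}(s)$--adjacent while $\alpha^{j_{n-1}}\cdot x\in K_{\alpha^{j_{n-1}}(s)}$, so $(F^{j_{n-1}}(c),\alpha^{j_{n-1}}\cdot x)\sim(F^{j_{n-1}}(c'),\alpha^{j_{n-1}}\cdot x)$ in $Y_n$; the converse follows because adjacencies internal to $Y_{n-1}^{j_{n-1}}$ pull back to adjacencies of $Y_{n-1}$, giving injectivity of the descended map. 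Thus $F^{j_{n-1}}$ descends to a cellular bijection onto $Y_{n-1}^{j_{n-1}}$ with cellular inverse (since $\alpha^{j_{n-1}}$ is invertible and the chamber map is a bijection), that is, the desired isomorphism of polyhedral complexes.
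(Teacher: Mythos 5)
Your proposal is correct, but it is organized differently from the paper's own argument. The paper disposes of this corollary in one line: ``by induction on $n$, using Lemma~\ref{l:adjacency} and Corollary~\ref{c:mirrors}'' --- that is, it treats the decomposition of $Y_n$ into $q_{n-1}$ copies of $Y_{n-1}$ glued to $w_nK$ as the inductive step of a recursion, with all verification left implicit. You instead give a direct, non-inductive construction: you invoke part~(2) of Lemma~\ref{l:disjoint} (which the paper's proof does not cite) to get uniqueness of index tuples, hence a well-defined chamber bijection by prepending $j_{n-1}$; you then use Lemma~\ref{l:adjacency} to show every adjacency and mirror type is transported by $\alpha^{j_{n-1}}$ and that the leading index is preserved except at the single attachment to $w_nK$; and you use Corollary~\ref{c:mirrors} for disjointness of the subcomplexes. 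Most substantively, you make the isomorphism explicit on points, $(c,x)\mapsto(F^{j_{n-1}}(c),\alpha^{j_{n-1}}\cdot x)$, and verify compatibility with the gluing relation $\sim$ in both directions --- this is exactly the content the paper's one-line proof suppresses, and your check that identifications pull back (via the fact that internal adjacencies of $Y_{n-1}^{j_{n-1}}$ correspond to adjacencies of $Y_{n-1}$) is what makes the descended map injective rather than merely surjective. What the paper's induction buys is brevity and the structural remark that $Y_n$ is built from $Y_{n-1}$ by attaching one chamber and $q_{n-1}-1$ twisted copies; what your version buys is a self-contained proof with the isomorphism written down concretely, which is also closer to what is actually needed later when the $H_n$--action is defined by conjugating by $F^{j_{n-1}}$.
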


\begin{proof} By induction on $n$, using Lemma~\ref{l:adjacency} and Corollary~\ref{c:mirrors} above.
\end{proof}

\subsection{Complexes of groups $G(Y_n)$ and $G(Y_\infty)$}\label{ss:GYn}

We now construct complexes of groups $G(Y_n)$ over each $Y_n$, and $G(Y_\infty)$ over $Y_\infty$, and show that there are coverings $G(Y_n) \to G(Y_1)$ and $G(Y_\infty) \to G(Y_1)$.  To simplify notation, write $Y$ for $Y_n$ or $Y_\infty$.

To define the local groups of $G(Y)$, let $\s$ be a vertex of $Y$, of type $T$.  By
Corollary~\ref{c:mirrors} above, $\s$ is contained in at most two distinct chambers of $Y$. If $\s$ is
only contained in one chamber of $Y$, put $G_\s=W_T$.  If $\s$ is contained in two distinct chambers of
$Y$, then by Corollary~\ref{c:mirrors} above $\s$ is contained in a unique interior mirror $K_s$, with $s \in
T$.  By the construction of $Y$, $s$ is in the $A$--orbit of some $s_n$, $n \geq 1$.  By
Condition~\eqref{c:halvable} of the Main Theorem, it follows that the group $W_T$ is halvable along
$s$.  We define the local group at $\s$ to be $G_\s=\half_{s}(W_T)$.

The monomorphisms between local groups are defined as follows.  Let $a$ be an edge of $Y$, with $i(a)$ of type $T$ and $t(a)$ of type $T'$, so that $T \subsetneq T'$.  If both of the vertices $i(a)$ and $t(a)$ are contained in a unique chamber of $Y$, then the monomorphism $\psi_a$ along this edge is defined to be the natural inclusion $W_T \hookrightarrow W_{T'}$.  If $i(a)$ is contained in two distinct chambers, then $i(a)$ is contained in a unique interior mirror $K_s$, with $s \in T$.  Thus $s \in T'$ as well, and so $t(a)$ is also contained in the mirror $K_s$.  From the definitions of $\half_s(W_T)$ and $\half_s(W_{T'})$, it follows that there is a natural inclusion $\half_s(W_T) \hookrightarrow \half_s(W_{T'})$, and we define $\psi_a$ be this inclusion.   Finally suppose $i(a)$ is contained in a unique chamber of $Y$ but $t(a)$ is contained in two distinct chambers of $Y$.  Then for some $k \geq 1$, $i(a)$ is in a chamber of $Y_{k,n}$ (respectively, $Y_{k,\infty}$), and $t(a)$ is either in $Y_{k-1,n}$ or in $Y_{k+1,n}$ (respectively, in $Y_{k-1,\infty}$ or $Y_{k+1,\infty}$).  Moreover $t(a)$ is contained in a unique interior mirror $K_s$, with $s \in T' - T$.  If $t(a)$ is in $Y_{k-1,n}$ (respectively, $Y_{k-1,\infty}$), then we define $\psi_a$ to be the natural inclusion $W_T \hookrightarrow \half_s(W_{T'})$.  If $t(a)$ is in $Y_{k+1,n}$ (respectively, $Y_{k+1,\infty}$), then we define $\psi_a$ to be the monomorphism defined on the generators $t \in T$ of $W_T$ by $\psi_a(t) := sts \in \half_s(W_{T'})$, that is, $\psi_a = \Ad(s)$.

It is not hard to verify that for all pairs of composable edges $(a,b)$ in $Y$, $\psi_{ab} = \psi_a \circ \psi_b$.  Hence we have constructed simple complexes of groups $G(Y_n)$ and $G(Y_\infty)$ over $Y_n$ and $Y_\infty$ respectively.  Note that these complexes of groups are faithful, since by construction the local group at each vertex of type $\emptyset$ is trivial.  Note also that  $G(Y_1)$ is the same complex of groups as constructed in Section~\ref{ss:complexes_of_groups} above, which has fundamental group $W$ and universal cover $\Sigma$.

\example Let $(W,S)$, $\alpha_1$ and $\alpha_2$ be as in the examples in Section~\ref{ss:underlying}
above.  The complex of groups $G(Y_2)$ is sketched in Figure~\ref{f:GY2}.  From left to
right, the three chambers here are $K$, $s_1K$ and $s_1\alpha_1(s_1)K$.  We denote by
$D_{2m}$ the dihedral group of order $2m$, with $D_m$ the dihedral group of order $m$, and similarly for
$D_{2m'}$ and $D_{m'}$ (recall that $m$ and $m'$ are even).

\vspace{3mm}

\begin{figure}[ht]
\begin{center}
\resizebox{125mm}{!}{\epsfbox{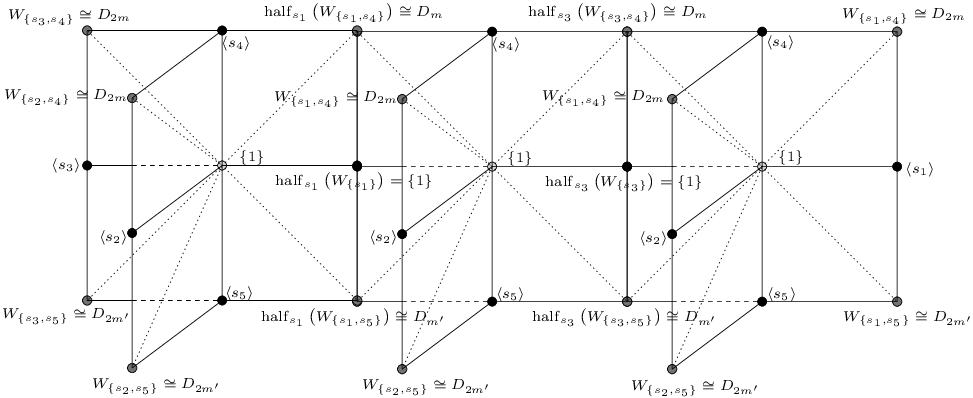}}
\end{center}
\caption{Complex of groups $G(Y_2)$}
\label{f:GY2}
\end{figure}

\begin{proposition} There are coverings of complexes of groups $G(Y_n) \to G(Y_1)$ and $G(Y_\infty) \to G(Y_1)$.
\end{proposition}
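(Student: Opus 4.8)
The plan is to exhibit an explicit covering morphism of complexes of groups in the sense of Definition~\ref{d:covering}, built over the natural projection of scwols. Since $G(Y_1) = G(K)$ has $K$ as its underlying scwol, and every chamber of $Y_n$ (respectively $Y_\infty$) is a copy of $K$, there is an obvious scwol morphism $f: Y_n \to Y_1$ (respectively $f: Y_\infty \to Y_1$) sending the vertex of a chamber $wK$ of type $T$ to the vertex of $K$ of type $T$. I would first check that $f$ is a nondegenerate morphism of scwols: it is injective on the star of each vertex because each chamber maps isomorphically onto $K$. The local maps $\phi_\s: G_\s \to G_{f(\s)} = W_{f(\s)}$ are then the natural inclusions $W_T \hookrightarrow W_T$ (the identity) when $\s$ lies in a single chamber, and the inclusion $\half_s(W_T) \hookrightarrow W_T$ when $\s$ lies in an interior mirror; these are all injective, giving condition~(1) of Definition~\ref{d:covering}. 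Since both complexes of groups are simple, all the elements $\phi(a)$ are trivial, so the diagrams of Definition~\ref{d:morphism} commute provided the monomorphisms are compatible with $f$, which follows directly from the construction of the $\psi_a$ in Section~\ref{ss:GYn}.

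The substantive step is verifying the bijection condition~\eqref{i:covbijection}: for each vertex $\s$ of $Y$ and each edge $b$ of $Y_1=K$ with $t(b) = f(\s)$, the induced map on the disjoint union of coset spaces
\[ \coprod_{\substack{a \in f^{-1}(b)\\ t(a)=\s}} G_\s / \psi_a(G_{i(a)}) \to W_{f(\s)} / (W_{f(\s)})_{i(b)} \]
must be a bijection. Here the right-hand side is a coset space of $W_T$ by a special subgroup. I would analyze this by cases according to whether $\s$ lies in one chamber or in an interior mirror, using the adjacency description of Lemma~\ref{l:adjacency} and Corollary~\ref{c:mirrors}. The edges $a \in f^{-1}(b)$ with $t(a) = \s$ correspond exactly to the chambers of $Y$ containing $\s$ that map down to the relevant face of $K$; by Corollary~\ref{c:mirrors}\eqref{i:vertex}, $\s$ lies in at most two chambers, so this disjoint union has at most two summands, and the counting reduces to checking that the index of $\half_s(W_T)$ (or $\Ad(s)\psi$-images) inside $W_T$ correctly accounts for the coset space of the larger special subgroup. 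The key identity is that $\half_s(W_T)$ has index~$2$ in $W_T$ (Definition~\ref{d:halvable}), so that two copies of a $\half_s$-coset space glue bijectively onto one $W_T$-coset space.

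The main obstacle will be the case where $\s$ lies in an interior mirror $K_s$ and $b$ is an edge of $K$ going ``down'' in type toward a face not contained in $K_s$. In that situation $f^{-1}(b)$ meeting $t^{-1}(\s)$ consists of two edges $a$ (one into each adjacent chamber), and the two monomorphisms $\psi_a$ differ by the twist $\Ad(s)$ coming from the $\psi_a(t) = sts$ convention of Section~\ref{ss:GYn}; I must check that the images $\psi_a(G_{i(a)})$ and $\psi_{a'}(G_{i(a')})$, namely $W_{T'}$ and $sW_{T'}s$ intersected with $\half_s(W_T)$, partition the two $\half_s(W_T)$-summands so that together they biject onto $W_T/W_{T'}$. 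This is where the index-$2$ property of halving is essential: the nontrivial coset of $\half_s(W_T)$ in $W_T$ is represented by $s$, and the two summands correspond precisely to the two cosets, with the $\Ad(s)$ twist interchanging them correctly. Once this bijection is established in each case, $\Lambda = (\phi_\s, \phi(a))$ satisfies Definition~\ref{d:covering} and is the desired covering. The argument for $G(Y_\infty) \to G(Y_1)$ is identical since the condition is local at each vertex and the local structure of $Y_\infty$ agrees with that of $Y_n$ for $n$ large by the inclusions $Y_n \subset Y_\infty$.
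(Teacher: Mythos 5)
Your overall strategy --- the type-preserving projection $f\colon Y \to Y_1=K$, local maps which are the identity $W_T \to W_T$ or the index-$2$ inclusion $\half_s(W_T)\hookrightarrow W_T$, and a case analysis according to how many chambers contain $\s$ --- is exactly the paper's. But there is a genuine error at the heart of your argument: the claim that ``since both complexes of groups are simple, all the elements $\phi(a)$ are trivial.'' Simplicity of $G(Y)$ and $G(Y_1)$ does not make the morphism between them simple, and in fact this morphism \emph{cannot} be simple. On an edge $a$ where $\psi_a = \Ad(s)\colon W_T \to \half_s(W_{T'})$, the diagram of Definition~\ref{d:morphism} requires $\phi_{t(a)}\circ\psi_a = \Ad(\phi(a))\circ\theta_{f(a)}\circ\phi_{i(a)}$; the left-hand side sends $g\mapsto sgs$ inside $W_{T'}$, while $\theta_{f(a)}\circ\phi_{i(a)}$ sends $g\mapsto g$, so commutativity forces $\phi(a)$ to be (essentially) $s$, not $1$. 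The paper sets $\phi(a)=s$ on precisely these edges, and $\phi(a)=1$ only where $\psi_a$ is a natural inclusion.

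This is not a cosmetic point: with all $\phi(a)$ trivial, condition~\eqref{i:covbijection} of Definition~\ref{d:covering} provably fails in your ``main obstacle'' case, namely $\s$ in an interior mirror $K_s$ with $s\notin T$, where $T'$ is the type of $\s$ and $T\subsetneq T'$ is the type of $i(b)$. The coset map is induced by $g \mapsto \phi_\s(g)\phi(a)$, so with $\phi(a_1)=\phi(a_2)=1$ both summands $\half_s(W_{T'})/W_T$ and $\half_s(W_{T'})/sW_Ts$ land only on cosets $wW_T$ with $w \in \half_s(W_{T'})$: the identity cosets of the two summands both hit the coset $W_T$ (so the map is not injective), and no coset $wW_T$ with $w\notin\half_s(W_{T'})$ is ever reached (so it is not surjective). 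The repair is exactly the nontrivial element $\phi(a_2)=s$: then the second summand maps $g(sW_Ts)\mapsto gsW_T$, and since $gs \notin \half_s(W_{T'})$ these are precisely the cosets missed by the first summand; the paper proves surjectivity this way and concludes bijectivity from the count $[\half_s(W_{T'}):W_T] = \tfrac12[W_{T'}:W_T]$. Your instinct that ``the $\Ad(s)$ twist interchanges the two cosets correctly'' is the right idea, but it can only enter through nontrivial $\phi(a)$'s, which contradicts your simplicity claim. (Separately, in the case $s\in T$ your sketch leaves implicit the identity $W_T \cap \half_s(W_{T'}) = \half_s(W_T)$, which is what gives injectivity there.)
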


\begin{proof}  Let $f_n:Y_n \to Y_1$ and $f_\infty:Y_\infty \to Y_1$ be the maps sending each vertex of
$Y_n$ or $Y_\infty$ of type $T$ to the unique vertex of $Y_1 = K$ of type $T$.  Then by construction of
$Y_n$ and $Y_\infty$, the maps $f_n$ and $f_\infty$ are nondegenerate morphisms of scwols.  We define
coverings $\Phi_n:G(Y_n) \to G(Y_1)$ and $\Phi_\infty:G(Y_\infty) \to G(Y_1)$ over $f_n$ and $f_\infty$
respectively.  To simplify notation, write $Y$ for respectively $Y_n$ or $Y_\infty$, $f$ for respectively $f_n$ or
$f_\infty$, and $\Phi$ for respectively $\Phi_n$ or $\Phi_\infty$.

Let $\s$ be a vertex of $Y$, of type $T$.  If the local group at $\s$ is $G_\s=W_T$ then the map of
local groups $\phi_\s:G_\s \to W_T$ is the identity map.  If the local group at $\s$ is $\half_s(W_T)$,
for some $s \in T$, then $\phi_\s:\half_s(W_T) \to W_T$ is the natural inclusion as an index $2$
subgroup.  To define elements $\phi(a)$, if the monomorphism $\psi_a$ in $G(Y)$ is natural inclusion,
define $\phi(a) = 1$.  If $\psi_a$ is $\Ad(s)$, then define $\phi(a) = s$.  It is then easy to check
that, by construction, $\Phi$ is a morphism of complexes of groups.

To show that $\Phi$ is a covering of complexes of groups, we first observe that each of the local
maps $\phi_\s$ is injective.   Now fix $\sigma$ a vertex of $Y$, of type $T'$, and $b$ an edge of
$Y_1=K$ such that $t(b) = f(\s)$, with $i(b)$ of type $T$ (hence $T \subsetneq T'$).  We must show
that the map \[  \Phi_{\s/b}:\coprod_{\substack{a \in f^{-1}(b)\\ t(a)=\sigma}} G_\sigma /
\psi_a(G_{i(a)}) \to W_{T'} / W_{T}\] induced by $g \mapsto \phi_\sigma(g)\phi(a)$ is a bijection,
where $G_\s$ and $G_{i(a)}$ are the local groups of $G(Y)$.

First suppose that $\s$ is contained in a unique chamber of $Y$.  Then by construction, there is a
unique edge $a$ of $Y$ with $i(a)$ of type $T$ and $t(a) = \s$, hence a unique edge $a \in f^{-1}(b)$
with $t(a) = \s$.  Moreover, $G_\s = W_{T'}$, $G_{i(a)} = W_T$, the monomorphism $\psi_a$ is natural
inclusion hence $\phi(a) = 1$, and $\phi_\s:G_\s \to W_{T'}$ is the identity map.  Hence
$\Phi_{\s/b}$ is a bijection in this case.

Now suppose that $\s$ is contained in two distinct chambers of $Y$.  Then $\s$ is contained in a
unique interior mirror $K_s$ of $Y$, with $s \in T'$.  Assume first that $s \in T$ as well.  Then
there is a unique edge $a$ of $Y$ with $i(a)$ of type $T$ and $t(a) = \s$.  This edge is also
contained in the mirror $K_s$.  Hence there is a unique $a \in f^{-1}(b)$ with $t(a) = \s$.  By
construction, we have $G_\s = \half_s(W_{T'})$, the map $\phi_\s:G_\s \to W_{T'}$ is natural
inclusion as an index $2$ subgroup, $G_{i(a)} = \half_s(W_T)$, the map $\psi_a$ is natural inclusion,
and $\phi(a)$ trivial.  Since the index $[W_{T'}:W_T] = [\half_s(W_{T'}):\half_s(W_T)]$ is finite, it
is enough to verify that the inclusion $\half_s(W_{T'}) \to W_{T'}$ induces an injective map on
cosets \[ \half_s(W_{T'})/\half_s(W_T) \to W_{T'}/W_T.\] For this, suppose that $w,w' \in
\half_s(W_{T'})$ and that $wW_T = w'W_T$ in $W_{T'}$.  Then $w^{-1}w' \in W_T \cap \half_s(W_{T'})$.
By definitions, it follows that $w^{-1}w' \in \half_s(W_T)$, as required.

Now assume that $\s$ is contained in the interior mirror $K_s$, with $s \not \in T$.  There are then
two edges $a_1, a_2 \in f^{-1}(b)$ such that $t(a_1) = t(a_2) = \s$.  Without loss of generality,
$\psi_{a_1}$ is natural inclusion $W_T \to \half_s(W_{T'})$ and $\phi(a_1) = 1$, while $\psi_{a_2}(g)
= sgs$ with $\phi(a_2) = s$.  Since the index $[\half_s(W_{T'}):W_T] =
\frac{1}{2}[W_{T'}:W_T]$ is finite, it is enough to show that the map on cosets $\Phi_{\s/b}$ is
surjective.  Let $w \in W_{T'}$.  If $w \in \half_s(W_{T'})\leq W_{T'}$, then the image of the coset
$w\psi_{a_1}(G_{i(a_1)}) = w W_T$ in $G_\s$ is the coset $wW_T$ in $W_{T'}$.  If $w \not \in
\half_s(W_{T'})$, then since $\half_s(W_{T'})$ has index $2$ in $W_{T'}$, and $s \not\in
\half_s(W_{T'})$, there is a $w' \in \half_s(W_{T'}) \leq W_{T'}$ such that $w = w's$.  The image of
the coset $w'\psi_{a_2}(G_{i(a_2)}) = w'(sW_Ts)$ in $\half_s(W_{T'})$ is then the coset
$w'\phi(a_2)W_T = w'sW_T =wW_T$ in $W_{T'}$.  Thus $\Phi_{\s/b}$ is surjective, as required.

We conclude that $\Phi$ is a covering of complexes of groups.
\end{proof}

\subsection{Group actions on $Y_n$ and $Y_\infty$}\label{ss:Hn}

In this section we construct the action of a finite group $H_n$ on $Y_n$ in the sense of Definition~\ref{d:action_on_scwol} above, and that of an infinite group $H_\infty$ on $Y_\infty$.

We first define the groups $H_n$ and $H_\infty$.  For each $n \geq 1$, let $C_{q_n}$ denote the cyclic
group of order $q_n$.  Note that $C_{q_n} \cong \langle \alpha_n \rangle$.  We define $H_1$ to be the trivial group and $H_2 = C_{q_1}$.  For $n \geq 3$, we
define $H_n$ to be the wreath product \begin{eqnarray*}H_n & = & H_{n-1} \wr C_{q_{n-1}} \\ & = &
(\cdots((C_{q_1} \wr C_{q_2}) \wr C_{q_3}) \wr \cdots )\wr C_{q_{n-1}} \\ & = & C_{q_1} \wr C_{q_2} \wr
\cdots \wr C_{q_{n-1}},\end{eqnarray*} that is, $H_n$ is the semidirect product by $C_{q_{n-1}}$ of the
direct product of $q_{n-1}$ copies of $H_{n-1}$, where $C_{q_{n-1}}$ acts on this direct product by
cyclic permutation of coordinates.  Note that $H_n$ is a finite group of order
\begin{equation}\label{e:orderHn} |H_n| = q_1^{q_2 q_3 \cdots q_{n-1}} q_2^{q_3 \cdots q_{n-1}} \cdots
q_{n-2}^{q_{n-1}}q_{n-1}.\end{equation} We define $H_\infty$ to be the infinite iterated (unrestricted)
wreath product \[ H_\infty:= C_{q_1} \wr C_{q_2} \wr \cdots \wr C_{q_{n-1}} \wr \cdots \] We then have
natural inclusions \[H_1 < H_2 < \cdots < H_n < \cdots < H_\infty.\]  The
following lemma will be needed for the proof of Corollary~\ref{c:infinite_generation} in
Section~\ref{ss:corollary_proof} below.

\begin{lemma}\label{l:not_fg} The group $H_\infty$ is not finitely generated.\end{lemma}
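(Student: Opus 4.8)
The plan is to show that $H_\infty$ is not finitely generated by exhibiting a strictly increasing infinite chain of proper subgroups whose union is all of $H_\infty$. First I would recall that $H_\infty = C_{q_1} \wr C_{q_2} \wr \cdots$ contains the nested sequence of finite subgroups $H_1 < H_2 < \cdots < H_n < \cdots$, and observe that by construction every element of $H_\infty$, being supported on only finitely many ``levels'' of the iterated wreath product, lies in some $H_n$. Thus $H_\infty = \bigcup_{n \geq 1} H_n$ is the directed union of the finite groups $H_n$, and each inclusion $H_n < H_{n+1}$ is strict since $|H_{n+1}| > |H_n|$ by Equation~\eqref{e:orderHn}.

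The key observation is the standard fact that a group expressible as a strictly increasing countable union of proper subgroups cannot be finitely generated. I would argue by contradiction: if $H_\infty$ were generated by a finite set $\{g_1, \ldots, g_r\}$, then since $H_\infty = \bigcup_n H_n$ and the chain is increasing, each generator $g_i$ would lie in some $H_{n_i}$. Taking $N = \max_i n_i$, all generators would lie in $H_N$, and hence $H_\infty = \langle g_1, \ldots, g_r \rangle \leq H_N$. This contradicts the fact that $H_N$ is a proper (indeed finite) subgroup of the infinite group $H_\infty$.

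The main point requiring care is the claim that $H_\infty = \bigcup_n H_n$, that is, that every element of the infinite unrestricted wreath product actually lies in one of the finite stages $H_n$. Here I would be careful about the distinction between the restricted and unrestricted wreath products: in the unrestricted product an element could in principle have infinite support. However, under the natural inclusions $H_n < H_{n+1}$ coming from identifying $H_n$ with the subgroup acting on a single ``copy'' at each successive level, the subgroups $H_n$ only account for elements supported on finitely many coordinates. I would therefore either verify directly that the inclusions are defined so that $\bigcup_n H_n$ is exactly the set of finitely-supported elements, and note that this union is already not finitely generated by the argument above (so that $H_\infty$, which contains it as a subgroup or equals it, inherits non-finite-generation); or, more simply, observe that it suffices for the application that $H_\infty$ \emph{contains} the non-finitely-generated group $\bigcup_n H_n$, together with the fact that every element of $H_\infty$ projects compatibly through the $H_n$. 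The cleanest route is to note that $\bigcup_n H_n$ is a subgroup that is itself not finitely generated, and that any finite generating set of $H_\infty$ would lie in some $H_N$ and hence generate a subgroup of $H_N$, which is finite and thus proper.

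The step I expect to be the genuine (if minor) obstacle is pinning down precisely which subgroup of the unrestricted wreath product the notation $H_\infty$ and the chain $H_1 < H_2 < \cdots$ refer to, so that the directed-union argument applies cleanly; once that is fixed, the non-finite-generation follows immediately from the elementary lemma that no group is the strictly increasing union of a countable chain of proper subgroups while being finitely generated.
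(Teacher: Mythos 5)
Your proposal is correct and takes essentially the same route as the paper: the paper's entire proof is the observation that, by definition of $H_\infty$, every nontrivial element lies in some finite stage $H_n$, after which your chain argument (a finite generating set would lie in some $H_N$, forcing $H_\infty = H_N$, absurd since $H_N$ is finite and $H_\infty$ is infinite) is exactly what is implicitly invoked. One caution about a side remark you make: it does \emph{not} suffice that $H_\infty$ merely \emph{contains} the non-finitely-generated subgroup $\bigcup_n H_n$, since non-finite-generation does not pass to overgroups (finitely generated groups, e.g.\ the free group $F_2$, contain non-finitely-generated subgroups); this alternative, had you relied on it, would be a genuine gap. Fortunately your main line never uses it: despite the word ``unrestricted'' in the paper's definition, the iterated wreath product there is nested on the left, so the intended group is precisely the increasing union $H_\infty = \bigcup_n H_n$, which is the fact both you and the paper rest on.
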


\begin{proof} By definition of
$H_\infty$, for any nontrivial $h \in H_\infty$ there is an $n \geq 1$ such that $h \in H_n$.
\end{proof}

We now define the actions of $H_n$ and $H_\infty$ on $Y_n$ and $Y_\infty$ respectively.  This uses the
label-preserving automorphisms $\alpha_n
\in A$.  Note that the action of $A$ on the nerve $L$ extends to the chamber $K$, fixing the vertex of type
$\emptyset$.  This action does not in general have a strict fundamental domain.  Inconveniently, this
action also does not satisfy Condition~\eqref{i:no_inversions} of Definition~\ref{d:action_on_scwol}
above, since for any nontrivial $\alpha \in A$, there is an edge $a$ of $K$ with $i(a)$ of type
$\emptyset$ but $\alpha(a) \neq a$.  However, to satisfy Definition~\ref{d:action_on_scwol}, it suffices
to define actions on $Y_n$ and $Y_\infty$, and then extend in the obvious way to the scwols which are
the barycentric subdivisions of these spaces, with naturally oriented edges.

For each $n \geq 1$ fix a generator $a_n$ for the cyclic group $C_{q_n}$.  Recall that $\alpha_n \in A$
has order $q_n$.  Thus for any $\alpha \in A$, there is a faithful representation $C_{q_n} \to A$, given
by $a_n \mapsto \alpha\alpha_n\alpha^{-1}$.  Recall also that $\alpha_n$ fixes the star in $L$ of the
vertex $s_{n+1}$, and that $\langle \alpha_n \rangle$ acts freely on the $\langle \alpha_n \rangle$--orbit of $s_n$.  Hence $a_n \mapsto \alpha\alpha_n\alpha^{-1}$
induces an action of $C_{q_n}$ on the chamber $K$, which fixes pointwise the mirror of type
$\alpha(s_{n+1})$, and permutes cyclically the set of mirrors of types $\alpha\alpha_n^{j_n}(s_n)$, for
$0 \leq j_n < q_n$.

We define the action of $H_n$ on $Y_n$ inductively, as follows.  The group $H_1$ is trivial.  For $n
\geq 2$, assume that the action of $H_{n-1}$ on $Y_{n-1}$ has been given.  The subgroup $C_{q_{n-1}}$ of
$H_n$ then fixes the chamber $w_{n}K=s_1 s_2 \cdots s_{n-1}K$ of $Y_n$ setwise, and acts on this chamber
via $a_{n-1} \mapsto \alpha_{n-1}$.  By the discussion above, this action fixes pointwise the mirror of
type $s_{n}$ of $w_nK$, and permutes cyclically the $q_{n-1}$ mirrors of types
$\alpha_{n-1}^{j_{n-1}}(s_{n-1})$, with $0 \leq j_{n-1} < q_{n-1}$, along which (by Lemma~\ref{l:subYn}
above), $q_{n-1}$ disjoint subcomplexes of $Y_n$, each isomorphic to $Y_{n-1}$, are attached.

By induction, a copy of $H_{n-1}$ in $H_n$ acts on each of these copies of $Y_{n-1}$ in $Y_n$.  More precisely, for $0 \leq j_{n-1} < q_{n-1}$, the $j_{n-1}$st copy of $H_{n-1}$ in $H_n$ acts on the subcomplex $Y^{j_{n-1}}_{n-1}$ of Lemma~\ref{l:subYn} above.  This action is given by conjugating the (inductively defined) action of $H_{n-1}$ on $Y_{n-1} \subset Y_n$ by the isomorphism $F^{j_{n-1}}:Y_{n-1} \to Y^{j_{n-1}}_{n-1}$ in Lemma~\ref{l:subYn}.  By definition, the action of $C_{q_{n-1}}$ cyclically permutes the subcomplexes $Y^{j_{n-1}}_{n-1}$, and so we have defined an action of $H_n$ on $Y_n$.
The action of $H_\infty$ on $Y_\infty$ is similar.

We now describe the fundamental domains for these actions.  For each $n \geq 1$ and each $1 \leq k \leq
n$, observe that $H_n$ acts transitively on the set of chambers $Y_{k,n}$.  Let $K_1 = K$, and for $n
\geq 2$ let $K_n$ be the quotient of the chamber $w_nK = s_1 s_2 \cdots s_{n-1}K$ by the action of
$C_{q_{n-1}}\leq H_n$ as defined above.  In $K_n$, the mirrors of types
$\alpha_{n-1}^{j_{n-1}}(s_{n-1})$, for $0 \leq j_{n-1} < q_{n-1}$, have been identified.  By abuse of
notation, we refer to these identified mirrors as the mirror of type $s_{n-1}$ of $K_n$.  Note also that
$C_{q_{n-1}}\leq H_n$ fixes pointwise the mirror of type $s_n$ of $w_nK$, and so we may speak of the
mirror of type $s_n$ of $K_n$.  Then a fundamental domain for the action of $H_n$ on $Y_n$ is the finite
complex \[ Z_n := \left(K_1 \cup K_2 \cup \cdots \cup K_n\right) / \sim, \] where $\sim$ means we
identify the $s_{i-1}$--mirrors of $K_{i-1}$ and $K_{i}$, for $1 \leq i < n$.  Similarly, a fundamental
domain for the action of $H_\infty$ on $Y_\infty$ is the infinite complex \[ Z_\infty := \left(K_1 \cup
K_2 \cup \cdots \cup K_n \cup \cdots \right) / \sim.\]

Finally we describe the stabilisers in $H_n$ and $H_\infty$ of the vertices of $Y_n$ and $Y_\infty$.
Let $wK$ be a chamber of $Y_n$ or $Y_\infty$.  Then there is a smallest $k \geq 1$ such that $wK \in
Y_k$.  By construction, it follows that the stabiliser in $H_n$ or $H_\infty$ of any vertex in the
chamber $wK$ is a subgroup of the finite group $H_k$.  Hence $H_n$ and $H_\infty$ act with finite
stabilisers.  Note also that for every $n \geq 1$, the action of $H_n$ fixes the vertex of type
$\emptyset$ in the chamber $w_nK$.  We may thus speak of the vertex of type $\emptyset$ in the quotient
$K_n$ defined above.  In fact, in the fundamental domains $Z_n$ and $Z_\infty$ defined above, the vertex
of type $\emptyset$ in $K_n$, for $n \geq 1$, has a lift in $Y_n$ or $Y_\infty$ with stabiliser the
finite group $H_n$.  We observe also that the actions of $H_n$ and $H_\infty$ are faithful, since the
stabiliser of the vertex of type $\emptyset$ of $K_1=K$ is the trivial group $H_1$.
Figure~\ref{f:Zinfty} shows $Z_\infty$ and the stabilisers of (lifts of) its vertices of type $\emptyset$ for the
example in Section~\ref{ss:underlying} above.

\begin{figure}[ht]
\begin{center}
\includegraphics{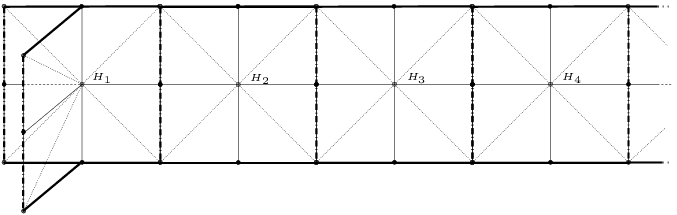}
\end{center}
\caption{Fundamental domain $Z_\infty$}
\label{f:Zinfty}
\end{figure}

\subsection{Group actions on $G(Y_n)$ and $G(Y_\infty)$}\label{ss:action}

In this section we show that the actions of $H_n$ and $H_\infty$ on $Y_n$ and $Y_\infty$, defined in Section~\ref{ss:Hn} above, extend to actions (by simple morphisms) on the complexes of groups $G(Y_n)$ and $G(Y_\infty)$.  To simplify notation, write $H$ for $H_n$ or $H_\infty$, $Y$ for $Y_n$ or $Y_\infty$, and $Z$ for $Z_n$ or $Z_\infty$.  Technically, instead of working with $G(Y)$, we work with the corresponding naturally defined complex of groups over the barycentric subdivision of $Y$, so that the action of $H$ satisfies Definition~\ref{d:action_on_scwol} above.  By abuse of notation we will however continue to write $G(Y)$.

Recall that for $\s$ a vertex of $Y$ of type $T$, the local group $G_\s$ is either $W_T$ or
$\half_s(W_T)$, and the latter occurs if and only if $\s$ is contained in an interior $s$--mirror of $Y$
with $s \in T$.  Let $wK$ be a chamber of $Y$ and let $h \in H$.  By definition of the $H$--action,
there is an $\alpha \in A$ such that for each vertex $\s$ in $wK$, with $\s$ of type $T$, the vertex
$h\cdot\s$ of $h\cdot wK$ has type $\alpha(T)$.  Moreover, if $\s$ is contained in an interior $s$--mirror then
$h\cdot\s$ is contained in an interior $\alpha(s)$--mirror.  We may thus define the local map
$\phi^h_\s:G_\s \to G_{h\cdot\s}$ by $\phi^h_\s(t) = \alpha(t)$ for each $t \in T$, and (if $G_\s =
\half_s(W_T)$), $\phi^h_\s(sts) = \alpha(s)\alpha(t)\alpha(s)$.  Then $\phi^h_\s$ is an isomorphism
either $W_T
\to W_{\alpha(T)}$, or $\half_s(W_T) \to \half_{\alpha(s)}(W_{\alpha(T)})$, as appropriate.  It is not hard to verify
that these local maps define an action of $H$ on $G(Y)$ by simple morphisms.

\subsection{Conclusion}\label{ss:conclusion}

In this section we combine the results of Sections~\ref{ss:underlying}--\ref{ss:action} above to complete the proof of the Main Theorem.

Recall that  $G(Y_1)$ is developable with universal cover $\Sigma$ (see
Section~\ref{ss:complexes_of_groups}).  By Proposition~\ref{p:covering} and
Theorem~\ref{t:coverings} above, it follows that the complexes of groups $G(Y_n)$
and $G(Y_\infty)$ are developable with universal cover $\Sigma$.  Let $H(Z_n)$ be the complex of groups induced by $H_n$ acting on
$G(Y_n)$, and $H(Z_\infty)$ that induced by $H_\infty$ acting on $G(Y_\infty)$.  By
Theorem~\ref{t:group_action} above, there are coverings of complexes of groups
$G(Y_n) \to H(Z_n)$ and $G(Y_\infty) \to H(Z_\infty)$.  Hence (by
Theorem~\ref{t:coverings} above) each $H(Z_n)$ and $H(Z_\infty)$ is developable with
universal cover $\Sigma$.

Let $\G_n$ be the fundamental group of $H(Z_n)$ and $\G$ the fundamental group of
$H(Z_\infty)$.  Since the complexes of groups $G(Y_n)$ and $G(Y_\infty)$ are
faithful, and the actions of $H_n$ and $H_\infty$ are faithful,
Theorem~\ref{t:group_action} above implies that $H(Z_n)$ and $H(Z_\infty)$ are
faithful complexes of groups.  Thus $\G_n$ and $\G$ may be identified with subgroups
of $G=\Aut(\Sigma)$.  Now $G(Y_n)$ and $G(Y_\infty)$ are complexes of finite groups,
and the $H_n$-- and $H_\infty$--actions have finite vertex stabilisers.  Hence by
construction, $H(Z_n)$ and $H(Z_\infty)$ are complexes of finite groups.  Therefore
$\G_n$ and $\G$ are discrete subgroups of $G$.  Since the fundamental domain $Z_n$
is finite, it follows that each $\G_n$ is a uniform lattice.  To show that $\G$ is a
nonuniform lattice, we use the normalisation of Haar measure $\mu$ on
$G=\Aut(\Sigma)$ defined in Section~\ref{ss:lattices} above, with the $G$--set $V$
the set of vertices of $\Sigma$ of type $\emptyset$.  Since the local groups of
$H(Z_\infty)$ at the
vertices of type $\emptyset$ in $Z_\infty$ are $H_1$, $H_2$, \ldots, we have \[ \mu(\G \bs G) =
\sum_{n=1}^\infty \frac{1}{|H_n|}. \] This series converges (see
Equation~\eqref{e:orderHn} above for the order of $H_n$, and note that each $q_n
\geq 2$).  We conclude that $\G$ is a nonuniform lattice in $G$.  Moreover, as the
covolumes of the uniform lattices $\G_n$ are the partial sums of this series, we
have $\mu(\G_n \bs G) \to \mu(\G \bs G)$, as required.  This completes the proof of
the Main Theorem.

\subsection{Proof of Corollary~\ref{c:infinite_generation}}\label{ss:corollary_proof}

The nonuniform lattice $\G$ is the fundamental group of the complex of groups
$H(Z_\infty)$ induced by the action of $H_\infty$ on $G(Y_\infty)$.  By the short exact sequence in 
Theorem~\ref{t:group_action} above, there is a surjective
homomorphism $\G \to H_\infty$.  Since $H_\infty$ is not finitely
generated (Lemma~\ref{l:not_fg} above), we conclude that $\G$ is not finitely 
generated.

\section{Examples}\label{s:examples}

In this section we describe several infinite families of examples to which the Main
Theorem applies.  By the \emph{dimension} of the Davis complex $\Sigma$ for a
Coxeter system $(W,S)$, we mean the maximum cardinality of a spherical subset of
$S$.  We note that there may be maximal spherical special subgroups $W_T$ with $|T|$
strictly less than $\dim(\Sigma)$.

\subsection{Two-dimensional examples}

If $\dim(\Sigma) = 2$ then the nerve of the Coxeter system $(W,S)$ is a graph $L$ with
vertex set $S$ and two vertices $s$ and $t$ joined by an edge if and only if $m_{st}$ is
finite.  Assume for simplicity that for some integer $m \geq 2$ all finite $m_{st} = m$.  Then
$\Sigma$ is the barycentric subdivision of a polygonal complex $X$, with all $2$--cells of $X$
regular Euclidean $2m$--gons, and the link of every vertex of $X$ the graph $L$.  Such an $X$ is
called a \emph{$(2m,L)$--complex}.   Condition~\eqref{c:halvable} of the Main Theorem can hold
only if $m$ is even, and so we also assume this.  It is then not hard to find graphs $L$ so that, for
some pair $s_1$ and $s_2$ of non-adjacent vertices of $L$, and for some nontrivial elements $\alpha_1,
\alpha_2 \in \Aut(L)$, Conditions~\eqref{c:fix},~\eqref{c:free} and~\eqref{c:orbit} of the Main Theorem also hold.  We
present three infinite families of examples.

\subsubsection{Buildings with complete bipartite links}\label{sss:right-angled}

Let $L$ be the complete bipartite graph $K_{q,q'}$, with $q,q' \geq 2$.  If $q \geq
3$ then there are (nonadjacent) vertices $s_1$ and $s_2$ of $L$, and nontrivial elements $\alpha_1$ and $\alpha_2$ of $\Aut(L)$, so that the Main Theorem applies.

If $m = 2$ then $\Sigma$ is the barycentric subdivision of the product of trees $T_q
\times T_{q'}$, where $T_q$ is the $q$--regular tree.  In particular, if $m = m' =
2$ in Example 1 of Section~\ref{ss:Davis_complexes} above, then $\Sigma$ is the
barycentric subdivision of $T_3 \times T_2$.  If $m \geq 4$, then by Theorem~12.6.1
of~\cite{D} the complex $\Sigma$ may be metrised as a piecewise hyperbolic
$\CAT(-1)$ polygonal complex.  With this metric, if $p = 2m$ and $q = q'$ then $\Sigma$ is the barycentric subdivision of Bourdon's building $I_{p,q}$
(studied in, for example,~\cite{B1} and~\cite{BP}), which is the unique $2$--complex
with all $2$--cells regular right-angled hyperbolic $p$--gons $P$, and the link of
every vertex the complete bipartite graph $K_{q,q}$.  Bourdon's building is a
right-angled hyperbolic building, of type $(W',S')$ where $W'$ is the Coxeter group
generated by the set of reflections $S'$ in the sides of $P$.

\subsubsection{Fuchsian buildings}\label{sss:fuchsian}

A Fuchsian building is a $2$--dimensional hyperbolic building.  Bourdon's building
$I_{p,q}$ is a (right-angled) Fuchsian building.  For Fuchsian buildings which are
not right-angled see,
for example,~\cite{B2} and~\cite{GP}.

To show that the Main Theorem applies to certain Fuchsian buildings which are not
right-angled, let $L$ be the finite building of rank $2$ associated to a Chevalley group $\cG$ (see~\cite{R}).  Then
$L$ is a bipartite graph, with vertex set say $S = S_1 \sqcup S_2$, and for some $k \in \{3,4,6,8\}$,
$L$ has girth $2k$ and diameter $k$.  Figure~\ref{f:proj_plane} depicts the building $L$ for the group
$\cG = GL(3,\F_2) = GL(3,2)$, for which $k = 3$.  The white vertices of this building may be identified
with the set of one-dimensional subspaces of the vector space $V=\F_2 \times \F_2 \times \F_2$, and the
black vertices with the set of two-dimensional subspaces of $V$.  Two vertices are joined by an edge if
those two subspaces are incident.

\begin{figure}[ht]
\begin{center}
\includegraphics{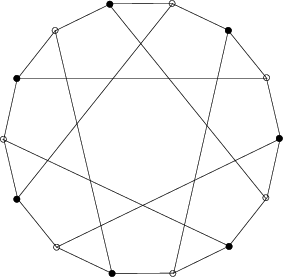}
\end{center}
\caption{The building $L$ for $\cG = GL(3,2)$}
\label{f:proj_plane}
\end{figure}

The group $\cG$ acts on $L$, preserving the type of vertices, with quotient an edge.  Suppose $s_1 \in
S_1$, and let $s_2 \in S_2$ be a vertex at distance $k$ from $s_1$.  Since $L$ is a thick building,
there is more than one such vertex $s_2$.  For $i = 1,2$, the stabiliser $P_i$ of $s_i$ in $\cG$ acts
transitively on the set of vertices of $L$ at distance $k$ from $s_i$. Now, by Theorem~6.18 of~\cite{R},
$P_i$ has a Levi decomposition \[P_i = U_i \rtimes L_i\] where $L_i$ is the subgroup of $P_i$ fixing the
vertex $s_{3-i}$.  Moreover, by Lemma 6.5 of~\cite{R}, $U_i$ fixes the star of $s_i$ in $L$.  Hence we
may find elements $\alpha_{3-i} \in U_{i}$ for which
Conditions~\eqref{c:fix} and~\eqref{c:free} of the Main Theorem hold. Condition~\eqref{c:orbit} of the Main Theorem follows
since $L$ is bipartite and the action of $\cG$ preserves the type of vertices.   For example, for $L$ as
in Figure~\ref{f:proj_plane}, if $s_1$ is the vertex $\{ (1,0,0) \}$, we may choose $s_2$ to be the
vertex $\{ (0,1,0),(0,0,1),(0,1,1)\}$, and then choose \[\alpha_1 = \left(\begin{array}{ccc} 1 & 0 & 0
\\ 1 & 1 & 0 \\ 1 & 0 & 1\end{array}\right) \quad\mbox{and}\quad\alpha_2 = \left(\begin{array}{ccc} 1 &
1 & 1 \\ 0 & 1 & 0 \\ 0 & 0 & 1\end{array}\right).\]

Suppose now that $L$ as above is the nerve of a Coxeter system $(W,S)$.  By
Theorem~12.6.1 of~\cite{D}, since $L$ has girth $\geq 6$, the corresponding Davis
complex $\Sigma$ may also be metrised as a piecewise hyperbolic $\CAT(-1)$ polygonal
complex.  With this metrisation, $\Sigma$ is then the barycentric subdivision of a
Fuchsian building, with the link of every vertex $L$ and all $2$--cells regular
hyperbolic $2m$--gons (of vertex angle $\frac{\pi}{k}$).  We call such a building a
\emph{$(2m,L)$--building}.  In general, there may be uncountably many isomorphism
classes of $(2m,L)$--buildings (see for instance~\cite{GP}).  In fact, the Davis
complex $\Sigma$ is the barycentric subdivision of the unique locally reflexive
$(2m,L)$--building with trivial holonomy (see Haglund~\cite{H2}).

\subsubsection{Platonic polygonal complexes}

A polygonal complex $X$ is \emph{Platonic} if $\Aut(X)$ acts transitively on the set of flags
(vertex, edge, face) in $X$.  Any Platonic polygonal complex is a $(k,L)$--complex,
with $k \geq 3$ and $L$ a graph such that $\Aut(L)$ acts transitively on the set of oriented
edges in $L$.  In~\cite{Sw}, \Swiatkowski\ studied CAT(0) Platonic polygonal complexes $X$,
where $L$ is a trivalent graph.  Such complexes are not in general buildings.

A graph $L$ is said to be \emph{$n$--arc regular}, for some $n \geq 1$, if $\Aut(L)$ acts simply
transitively on the set of edge paths of length $n$ in $L$.  For example, the Petersen graph in
Figure~\ref{f:petersen} above is $3$--arc regular. Any finite, connected, trivalent graph $L$,
with $\Aut(L)$ transitive on the set of oriented edges of $L$, is $n$--arc regular for some $n
\in \{1,2,3,4,5\}$ (Tutte~\cite{T}).  \Swiatkowski~\cite{Sw} showed that if $n \in \{3,4,5\}$,
then for all $k \geq 4$ there is a unique $(k,L)$--complex $X$, with $X$ Platonic.  Thus if
$k=2m$ is even, the barycentric subdivision of $X$ is the Davis complex $\Sigma$ for $(W,S)$,
where $(W,S)$ has nerve $L$ and all finite $m_{st} = m$.

Now suppose $L$ is a finite, connected, trivalent, $n$--arc regular graph with $n \in \{
3,4,5\}$.  Choose vertices $s_1$ and $s_2$ of $L$ at distance two in $L$ if $n = 3,4$, and at
distance three in $L$ if $n = 5$.  Then by Propositions 3--5 of Djokovi\'c--Miller~\cite{DM}, for
$i = 1,2$ there are involutions $\alpha_i \in \Aut(L)$ such that $\alpha_i$ fixes the star of
$s_{3-i}$ in $L$, and $\alpha_i(s_i) \neq s_i$ is not adjacent to $s_i$.  Thus if $m$
is even, the Main Theorem applies to $G=\Aut(\Sigma)$.

\subsection{Higher-dimensional examples}

We now discuss examples in dimension $> 2$ to which the Main Theorem applies.  The construction of the building $\Sigma$ below was suggested by an anonymous referee (our own examples were just for $W$ right-angled). 

We first discuss when Condition~\eqref{c:halvable} in the Main Theorem can hold.  Suppose $W_T$ is a
spherical special subgroup of $W$, with $k = |T| > 2$.  If $W_T$ is irreducible, then from the
classification of spherical Coxeter groups (see, for example,~\cite{D}), it is not hard to verify that
$W_T$ is halvable along $s \in T$ if and only if $W_T$ is of type $B_k$, with $s \in T$ the unique generator
so that $m_{st} \in \{2,4\}$ for all $t \in T - \{s\}$; in this case $\half_s(W_T)$ is of type $D_k$.  If $W_T$ is reducible, then so long as $s$ is contained in a direct factor $W_{T'}$, $T' \subsetneq T$, such that either $W_{T'} = \langle s \rangle \cong C_2$, $W_{T'}$ is an even dihedral group, or $W_{T'}$ is of type $B_j$ with $j < k$ and $s$ the particular generator described above, then $W_T$ will be halvable along $s$.

Now let $L$ be a thick spherical building of rank $k > 2$.  A reducible example is $L$ the join of $k$ sets of points, with each set having cardinality at least $3$.  An irreducible example is $L$ the building for a Chevalley group $\cG$ of rank $k$ over a finite field, such as $GL(k+1,2)$.  

Define a Coxeter group $W$ with nerve $L$ as follows.  Fix $\Delta$ a chamber of $L$.  Then $\Delta$ is a simplex on $k$ vertices.  Let $p: L \to \Delta$ be the projection onto this chamber.  Label the edges of $\Delta$ by the $m_{st}$ for a finite Coxeter group $V$ on $k$ generators, such that $V$ is a product of cyclic groups of order $2$, even dihedral groups and copies of $B_j$, $j < k$.  For example, when $V$ is right-angled all $m_{st} = 2$.  Pull the edge labels of $\Delta$ back via $p$ to obtain a labelling of the edges of $L$.  This defines a Coxeter group $W$ with nerve $L$, so that each maximal spherical special subgroup of $W$ is isomorphic to $V$.  

The Davis complex $\Sigma$ for $W$ is tiled by copies of the barycentric subdivision of the Coxeter polytope $P$ associated to $V$.  For example, when $V$ is right-angled, $P$ is a $k$--cube.  The link of each vertex of $P$ is $L$.  Applying the metric criterion of Charney--Lytchak~\cite{CL}, it follows that $\Sigma$ is the barycentric subdivision of a building.  Note that $\dim(\Sigma) = k > 2$.  

Choose vertices $s_1$ and $s_2$ in $L$ which are
\emph{opposite} (see~\cite{R}).  By the same arguments as in
Section~\ref{sss:fuchsian} above, there are (type-preserving) elements $\alpha_1,\alpha_2 \in \Aut(L)$ so that Conditions~\eqref{c:fix}--\eqref{c:orbit} of the Main Theorem hold.  A careful choice of $V$, such that $s_1$ and $s_2$ if contained in some copy of $B_j$ are both the required generators, then guarantees that Condition~\eqref{c:halvable} of the Main Theorem holds.  Hence the Main Theorem applies to many examples of buildings of dimension $> 2$.

We do not know of any \emph{hyperbolic} buildings of dimension $>2$ to
which the Main Theorem applies.  For the $3$--dimensional constructions of
Haglund--Paulin in~\cite{HP2}, certain of the $m_{st}$ must be equal to $3$, so
Condition~\eqref{c:halvable} of the Main Theorem will not hold.  

A slight modification of the above construction, for example by adding a vertex $s$ to $L$ with $m_{st} = \infty$ for all $t \in S - \{ s\}$, produces nerves which are not
buildings, hence examples of $\Sigma$ of dimension $> 2$ which are not buildings.

\end{document}